\newtheorem{thm}{Theorem}
\newtheorem{prop}{Proposition}
\newtheorem{lemma}{Lemma}
\newtheorem{claim}{Claim}
\newtheorem{definition}{Definition}
\newtheorem{assumption}{Assumption}
\newtheorem{remark}{Remark}
\newtheorem{example}{Example}
\definecolor{darkpink}{rgb}{0.91, 0.33, 0.5}
\definecolor{puorange}{rgb}{0.80,0.20,0}
\definecolor{bluegray}{rgb}{0.04,0,0.7}
\definecolor{greengray}{rgb}{0.05,0.50,0.15}
\definecolor{darkbrown}{rgb}{0.40,0.2,0.05}
\definecolor{darkcyan}{rgb}{0,0.4,1}
\definecolor{black}{rgb}{0,0,0}
\definecolor{grey}{rgb}{0.93,0.93,0.93}
\definecolor{royalazure}{rgb}{0.0, 0.22, 0.66}
\definecolor{blueviolet}{RGB}{138,43,226}
\definecolor{lighttan}{rgb}{0.97,0.92,0.92}
\definecolor{cadmiumgreen}{rgb}{0.0, 0.42, 0.24}
\definecolor{harvardcrimson}{rgb}{0.79, 0.0, 0.09}
\crefname{section}{Section}{Sections}
\crefname{appendix}{Appendix}{Appendices}
\crefname{thm}{Theorem}{Theorems}
\crefname{theorem}{Theorem}{Theorems}
\crefname{lemma}{Lemma}{Lemmas}
\crefname{claim}{Claim}{Claims}
\crefname{lem}{Lem.}{Lems.}
\crefname{corollary}{Corollary}{Corollaries}
\crefname{proposition}{Proposition}{Propositions}
\crefname{prop}{Proposition}{Propositions}
\crefname{assumption}{Assumption}{Assumptions}
\crefname{asm}{Assumption}{Assumptions}
\crefname{algorithm}{Algorithm}{Algorithms}
\Crefname{algorithm}{Algorithm}{Algorithms}
\crefname{figure}{Figure}{Figures}
\crefname{table}{Table}{Tables}
\definecolor{lightblue}{RGB}{0,160,200}
\definecolor{gray}{rgb}{0.4,0.4,0.4}
\newcommand{\red}[1]{{\color{purple}#1}}
\newcommand{\blue}[1]{{\color{royalazure}#1}}
\newcommand{\edit}[1]{{#1}}
\newcommand{\ones}{\boldsymbol{1}}
\renewcommand{\epsilon}{\varepsilon}
\newcommand \eps \epsilon
\newcommand \prob {{\mathbb{P}}}
\DeclarePairedDelimiterX{\inp}[2]{\langle}{\rangle}{#1, #2} 
\newcommand{\norm}[1]{\left\Vert #1 \right\Vert}
\newcommand{\abs}[1]{\left\lvert #1 \right\rvert}
\newcommand \argmin {\operatorname*{arg\,min}} %
\newcommand \argmax {\operatorname*{arg\,max}} %
\newcommand \dom {\operatorname*{dom}} %
\newcommand \Gap {\mathop{\mathrm{Gap}}\nolimits}  %
\newcommand \grad {\nabla}
\newcommand \Lcal {\mathcal L}
\newcommand{\R}{\mathbb{R}}
\newcommand{\F}{\ensuremath{\mathcal{F}}}
\newcommand{\p}[1]{\ensuremath{\left(#1\right)}}
\newcommand{\br}[1]{\ensuremath{\left\{#1\right\}}}
\newcommand{\sbr}[1]{\ensuremath{\left[#1\right]}}
\newcommand{\sse}{\subseteq}
\newcommand{\E}[2]{\ensuremath{{\mathbb E}_{#1}\left[#2\right]}}
\newcommand{\mc}[1]{\ensuremath{\mathcal{#1}}}
\newcommand{\ip}[1]{\ensuremath{\left\langle #1 \right\rangle}}
\newcommand{\ipsmall}[1]{\ensuremath{\langle #1 \rangle}}
\newcommand{\breg}[3]{\Delta_{#1}(#2, #3)}
\newcommand{\Ex}{\mathbb{E}}
\newcommand{\A}{\boldsymbol{A}}
\newcommand{\fb}{\boldsymbol{f}}
\newcommand{\bfb}{\bar{\boldsymbol{f}}}
\newcommand{\hfb}{\hat{\boldsymbol{f}}}
\newcommand{\Lb}{L}
\newcommand{\Gb}{G}
\newcommand{\Gbb}{\boldsymbol{G}}
\newcommand{\Lbb}{\boldsymbol{L}}
\newcommand{\Lc}{M}
\newcommand{\Gc}{H}
\newcommand{\Dc}{D}
\newcommand{\lamb}{\lambda}
\newcommand{\lambb}{\boldsymbol{\lambda}}
\newcommand{\hgb}{\hat{\boldsymbol{g}}}
\newcommand{\hJb}{\hat{\boldsymbol{J}}}
\newcommand{\Dmax}{D_{\max{}}}
\newcommand{\Gmax}{H_{\max{}}}
\newcommand{\Mmax}{M_{\max{}}}
\newcommand{\X}{\mc{X}}
\newcommand{\Y}{\mc{Y}}
\newcommand{\Z}{\mc{Z}}
\renewcommand{\u}{\boldsymbol{u}}
\renewcommand{\v}{\boldsymbol{v}}
\newcommand{\x}{\boldsymbol{x}}
\newcommand{\y}{\boldsymbol{y}}
\newcommand{\e}{\boldsymbol{e}}
\newcommand{\by}{\bar{y}}
\newcommand{\byb}{\bar{\boldsymbol{y}}}
\newcommand{\z}{\boldsymbol{z}}
\newcommand{\g}{\bar{\boldsymbol{g}}}
\newcommand{\s}{\boldsymbol{s}}
\newcommand{\pb}{\boldsymbol{p}}
\newcommand{\qb}{\boldsymbol{q}}
\newcommand{\rb}{\boldsymbol{r}}
\newcommand{\sbold}{\boldsymbol{s}}
\newcommand{\hf}{\hat{f}}
\newcommand{\barf}{\bar{f}}
\newcommand{\tx}{\tilde{\boldsymbol{x}}}
\newcommand{\ty}{\tilde{\boldsymbol{y}}}
\newcommand{\hyb}{\hat{\boldsymbol{y}}}
\newcommand{\hy}{\hat{y}}
\newcommand{\hxb}{\hat{\x}}
\newcommand{\gap}{\operatorname{Gap}}
\newcommand{\ri}{\operatorname{ri}}
\newcommand{\Lavg}{\boldsymbol{\lambda}_{\mathrm{avg}}}
\newcommand{\Lunif}{\boldsymbol{\lambda}_{\mathrm{unif}}}
\newcommand{\Limp}{\boldsymbol{\lambda}_{\mathrm{imp}}}
\newcommand{\bgam}{\boldsymbol{\gamma}}
\newcommand{\TP}{\mathcal{T}^{\mathrm{P}}}
\newcommand{\TD}{\mathcal{T}^{\mathrm{D}}}
\newcommand{\hTP}{\hat{\mathcal{T}}^{\mathrm{P}}}
\newcommand{\hTD}{\hat{\mathcal{T}}^{\mathrm{D}}}
\newcommand{\CP}{\mathcal{C}^{\mathrm{P}}}
\newcommand{\CD}{\mathcal{C}^{\mathrm{D}}}
\newcommand{\hCP}{\hat{\mathcal{C}}^{\mathrm{P}}}
\newcommand{\hCD}{\hat{\mathcal{C}}^{\mathrm{D}}}
\newcommand{\EP}{\mathcal{E}^{\mathrm{P}}}
\newcommand{\ED}{\mathcal{E}^{\mathrm{D}}}
\newcommand{\IP}{\mathcal{I}^{\mathrm{P}}}
\newcommand{\ID}{\mathcal{I}^{\mathrm{D}}}
\newcommand{\Fro}{\mathrm{Fro}}
\newcommand{\wP}{w^{\mathrm{P}}}
\newcommand{\wD}{w^{\mathrm{D}}}
\newcommand{\sub}{\textsc{subroutine}\xspace}
\def\vw{{\bm{w}}}
\def\vx{{\bm{x}}}
\def\vy{{\bm{y}}}
\def\vz{{\bm{z}}}
\title{Efficient Methods for Min-Max Optimization\\with Dual-Linear Coupling}
\author{Ronak Mehta$^{1}$ \qquad Jelena Diakonikolas$^{2}$ \qquad Zaid Harchaoui$^1$ \vspace{0.3cm} \\
{
\small $^1$University of Washington, Seattle
\qquad
$^2$University of Wisconsin, Madison
}
}
\date{\today}
\begin{document}

\maketitle

\begin{abstract}
    We study a class of convex-concave min-max problems in which the coupled component of the objective is linear in at least one of the two decision vectors.  We identify such problem structure as interpolating between the bilinearly and nonbilinearly coupled problems, motivated by key applications in areas such as distributionally robust optimization and convex optimization with functional constraints. Leveraging the considered nonlinear-linear coupling of the primal and the dual decision vectors, we develop a general algorithmic framework leading to fine-grained complexity bounds exploiting separability properties of the problem, whenever present. The obtained complexity bounds offer potential improvements over state-of-the-art scaling with $\sqrt{n}$ or $n$ in some of the considered problem settings, which even include bilinearly coupled problems, where $n$ is the dimension of the dual decision vector. On the algorithmic front, our work provides novel strategies for combining randomization with extrapolation and multi-point anchoring in the mirror descent-style updates in the primal and the dual, which we hope will find further applications in addressing related optimization problems. %

\end{abstract}

\section{Introduction}\label{sec:intro}

Min-max (a.k.a.\ saddle point) optimization represents a fundamental class of problems with long history in optimization and game theory, spanning a plethora of applications in machine learning and operations research \citep{facchinei2003finite}. Among more recent applications in the data sciences, min-max optimization problems arise in adversarial training 
\citep{goodfellow2014generative, madry2018towards}, 
algorithmic fairness \citep{Chen2023AGuide, agarwal2018areductions, mehrabi2021asurvey}, 
distributionally robust optimization \citep{shapiro2017distributionally, Kuhn2019Wasserstein, Sagawa2020Distributionally},
and reinforcement learning \citep{kotsalis2022simple, moos2022robust}.

In their general form, min-max optimization problems ask for solving %
\begin{align}
    \min_{\x \in \X} \max_{\y \in \Y} \ c(\x, \y) - \psi(\y) + \phi(\x),\label{eq:spp}
\end{align}
with respect to \emph{primal variables} $\x \in \X \sse \R^d$ and \emph{dual variables} $\y \in \Y \sse \R^n$. We refer to $c(\x, \y)$ as the \emph{coupled} component, whereas $\phi(\x)$ and $\psi(\y)$ are the \emph{uncoupled} components of the objective.
Such problems are further categorized as \emph{bilinearly coupled} if $c(\x, \y) = \ip{\y, \A \x}$ for $\A \in \R^{n \times d}$ and standard Euclidean inner product $\ip{\cdot, \cdot}$, and are called \emph{nonbilinearly coupled} otherwise. They are said to be \emph{convex-concave} if the overall objective is convex in $\x$ for fixed $\y$ and concave in $\y$ for fixed $\x$. Convex-concave problems, or those with a convex-concave coupled component $c(\x, \y)$ and convex uncoupled components $\phi(\x)$ and $\psi(\y)$, are the focus of this work. 

General convex-concave problems of the form \eqref{eq:spp} are possible to abstract away and study within the framework of variational inequalities with monotone operators. Classical algorithms developed for variational inequalities with monotone operators such as the extragradient \citep{korpelevich1976extragradient}, Popov's method \citep{popov1980amodification}, mirror-prox \citep{nemirovski2004mirrorprox}, and dual-extrapolated method \citep{nesterov2007dual} can address such general problems \eqref{eq:spp} under minimal, standard assumptions. Randomized variants of these methods are also considered, for instance, in \citet{juditsky2011first, juditsky2013randomized, baes2013randomized}. More recent methods, such as those obtained in \citet{alacaoglu2022stochastic,cai2024variance,diakonikolas2025block}, offer more fine-grained complexity bounds and potential speedups for min-max objectives (and corresponding monotone operators) that are sum-decomposable. However, the broad applicability of these results comes at the cost of possibly missed opportunities for further speed improvements when the coupled component $c(\x, \y)$ exhibits further structural properties.

On the other side of the spectrum, splitting primal-dual methods specialized to problems with a bilinearly coupled component such as the primal-dual hybrid gradient (PDHG) method of \citet{chambolle2011afirstorder} and its many variants and specializations \citep{chambolle2018stochastic,alacaoglu2022ontheconvergence,song2021variance,song2022coordinate,dang2014randomized,fercoq2019coordinate} have been quite effective in offering both theoretical and practical speedups based on the structure of the objective's coupled component. For instance, methods based on PDHG are practical state-of-the-art for solving large-scale linear programs \citep{applegate2025pdlp}.  The question that then arises is whether the success of splitting primal-dual methods can be transferred to objectives with more general primal-dual coupling.

The starting point of our work is the observation that in many applications of core practical interest, the coupled component of the objective is neither bilinear nor fully nonbilinear. In particular, a wide range of problems studied within the framework of distributionally robust optimization (DRO), both for supervised learning \citep{Namkoong2016Stochastic, shapiro2017distributionally, carmon2022distributionally, Levy2020Large-Scale, mehta2024drago, mehta2024distributionally} and reinforcement learning \citep{Kallus2022Doubly, Yang2023Distributionally, Wang2023AFinite, Yu2023FastBellman}, fall into this category.
Similarly, fully composite problems (see \citet{cui2018composite, drusvyatskiy2019efficiency, doikov2022high, vladarean2023linearization} and references therein) and minimization problems with functional constraints can all be cast as min-max optimization problems in which the primal-dual coupling is linear in one of the two decision vectors (see \Cref{ex:dro,ex:fco,ex:fcm,ex:eigenvalue} for more details). Hence, the question motivating our work is whether insights from the bilinearly coupled case can be used to design efficient algorithms for these ``dual linear'' min-max problems\footnote{While we fix the convention that the coupled term is linear in the dual variables, our methods extend by analogy to primal linear objectives.}.  
Formally, we study the optimization problem
\begin{align}
    \min_{\x \in \X} \max_{\y \in \Y} \sbr{\Lcal(\x, \y) := \ip{\y, \fb(\x)} - \psi(\y) + \phi(\x)},
    \label{eq:semilinear}
\end{align}
wherein $\fb = (f_1, \ldots, f_n)$ has convex components, $\phi$ is $\mu$-strongly convex ($\mu \geq 0$), and $\psi$ is $\nu$-strongly convex ($\nu \geq 0$). We further require that $\Y \sse \br{\y \in \R^n: y_j \geq 0 \text{ if } f_j \text{ is non-linear}}$, so that~\eqref{eq:semilinear} constitutes a convex-concave saddle point problem. 

\subsection{Technical Motivation}
To motivate the usefulness of specialized algorithms for~\eqref{eq:semilinear}, consider treating it generically as a nonbilinearly-coupled saddle-point problem (or more generally still as a variational inequality (VI) problem). To quantify the resulting complexity, let $\lambda$ be the Lipschitz parameter of $(\x, \y) \mapsto (\grad_{\x} \Lcal, -\grad_{\y} \Lcal)$, when it exists. Classical approaches such as \citep{korpelevich1976extragradient,popov1980amodification,nemirovski2004mirrorprox,nesterov2007dual}, and their linearly convergent variants \citep{nesterov2006solving, marcotte1991application, tseng1995onlinear, mokhtari2020aunified, hai2025arefined} will achieve a total arithmetic complexity of %
\begin{align*}
    \tilde{O}\p{nd\min\br{\tfrac{\lambda}{\epsilon}, \tfrac{\lambda}{\mu \wedge \nu} \ln\p{\tfrac{1}{\epsilon}}}},
\end{align*}
to achieve a primal-dual gap bounded above by $\epsilon > 0$ (see~\eqref{eq:gap}, \Cref{sec:template}), where we assume that the per-iteration complexity is $\tilde{O}(nd)$ and we interpret $\lambda / 0 := +\infty$ . 

Furthermore, using generic acceleration Catalyst schemes \citep{lin2018catalyst} adapted for min-max problems, improved complexity bounds can be obtained. \citet{he2020acatalyst} achieve an $\tilde{O}(nd\lambda/\sqrt{\mu\epsilon})$ runtime in the strongly convex-concave regime. 
Recently~\citet{lan2023novel} obtained a complexity of $\tilde{O}(nd\lambda/\sqrt{\mu \nu} \ln\p{\tfrac{1}{\epsilon}})$ for strongly convex-strongly concave problems. 
A disadvantage of classical approaches that is overcome by the Catalyst approach is the dependence on the minimum of the two strong convexity constants. In the examples above, $\psi$ often plays the role of a strongly convex smoothing penalty whose strong convexity constant may be near-zero (e.g., $O(\epsilon)$ for some applications). Thus, guarantees depending on $(\mu \wedge \nu)$ are undesirable. 
A second disadvantage incurred by all of the approaches above is the dependence on a coarse Lipschitz constant $\lambda$ for the entire vector field. When it is only known that each component function $f_j$ is $\Gmax$-Lipschitz continuous and $\Mmax$-smooth, and that $\abs{y_j} \leq \Dmax$ for any $\y = (y_1, \ldots, y_n) \in \Y$, then $\lambda$ can be estimated as $n(\Gmax + \Dmax\Mmax)$. Combined with the order-$nd$ per-iteration cost, the total arithmetic complexity can have an $O(n^2d)$ dependence on the dimensions $(n, d)$. 
These two issues highlight the price to pay when using generic acceleration schemes: the loss of adaptation to the structure of the problem and the related constants. In contrast, by leveraging non-uniformity in the various Lipschitz/smoothness constants for each $f_j$ and randomized updates, we may achieve complexities that are linear in $(n + d)$. 

Moreover, the Lipschitz continuity and smoothness parameters of the component functions represent quantities of practical interest in applications, which we review via the examples in \Cref{sec:preliminaries}. In \Cref{ex:dro}, if we assume that $f_j(\x) = \ell(\z_j^\top \x)$ for a univariate loss function $\ell$ and data instance $\z_j$, then under appropriate conditions on $\ell$, the continuity properties of $f_j$ will depend on the norm of $\z_j$. In \Cref{ex:fco}, we have that $\fb(\x) = (\boldsymbol{h}(\x), \x)$ for some $\R^{n-d}$-valued function $\boldsymbol{h}$ so that $f_{n-d+1}, \ldots, f_n$ are $1$-Lipschitz continuous and $0$-smooth; a complexity result that only depends on the $n$ times the maxima of these constants over the components of $\fb$ could be overly pessimistic.
In our analysis, we handle every combination of $\mu = 0$ versus $\mu > 0$ and $\nu = 0$ versus $\nu > 0$ to achieve dependences on $\epsilon$ in a unified way, with complexity bounds that enjoy a transparent dependence on these component-wise Lipschitz and smoothness constants.
The dependence on component-wise problem constants has been shown in recent works on stochastic variance-reduced methods for variational inequality problems \citep{alacaoglu2022stochastic, cai2024variance, pichugin2024method, alizadeh2024variance, diakonikolas2025block}, for which we offer a detailed technical comparison in \Cref{sec:discussion}.

 \subsection{Contributions}
We study the dual linear min-max problem~\eqref{eq:semilinear} as an intermediary between bilinearly and nonbilinearly coupled problem classes. 
\begin{itemize}
    \item {\bf Unified convergence analysis for algorithms and problem classes:} We provide a general template for a first-order proximal gradient algorithm motivated by the analysis and instantiate it in three forms: a full vector update variant, a stochastic variant for generic objectives, and a variant that utilizes ``separable'' structure in the objective and feasible set. 
    \item {\bf Fine-grained dependence on component-wise constants:} We prove convergence rates for algorithms for this ``dual-linear'' (or equivalently, ``primal-linear'') problem class, with a transparent dependence on all individual problem parameters, including individual smoothness and strong convexity constants. On the suboptimality parameter $\epsilon > 0$, we achieve the standard $O(1/\epsilon)$ and $O(\ln(1/\epsilon))$ complexities for the convex-concave and strongly convex-strongly concave settings, and $O(1/\sqrt{\epsilon})$ in the strongly convex-concave setting.
    \item {\bf Improved dimension dependence over generic methods:} We achieve up to an $\sqrt{n}$ factor improvement over the recent methods of \citet{alacaoglu2022stochastic} and \citet{cai2024variance} by leveraging additional structure in the convex-concave case. 
    \item {\bf Complexity improvements in specific problem classes:} Furthermore, while motivated to solve problems more general than bilinear coupled min-max problems, our algorithm achieves complexity improvements in more settings such as matrix games. 
    \edit{For separable matrix games (see \Cref{def:separable}), we may improve over state-of-the-art methods by up to an $n$ factor when the coupling matrix is dense and square.}
\end{itemize}
The main technical ingredient of the algorithm is a combination of non-uniform sampling for randomized extrapolation and non-uniform weighting of multiple prox anchors/centers (which we refer to as ``historical regularization'') within mirror descent-style updates.
The randomized extrapolation component can also be interpreted as a variance reduction method for objectives with sum-decomposable coupled components. Notably, the historical regularization component, outlined in the upcoming \Cref{algo:drago_v2}, overcomes a barrier that was previously not handled by non-uniform sampling alone \citep{chambolle2018stochastic, song2021variance, alacaoglu2022complexity}.

\subsection{Related Work}
Among methods similar to ours for general nonbilinearly-coupled objectives, \citet{hamedani2021aprimal} achieve an $O(1/\epsilon)$ iteration complexity for the convex-concave case using a similar extrapolation term to ours with full vector updates, which can be seen as a generalization of iterate extrapolation in Chambolle-Pock-style algorithms \citep{chambolle2011afirstorder, chambolle2016ontheergodic}. Interestingly, they also consider the dual linear case in which $\phi$ is strongly convex, matching our $O(1/\sqrt{\epsilon})$ complexity in this case. Beyond the dependence on the error parameter $\epsilon$, a key focus of our work is achieving improved, fine-grained, complexity with stochastic algorithms, dependent on Lipschitz continuity and smoothness constants of individual component functions. When the coupled term is expressible as a finite sum of component functions (as in \eqref{eq:semilinear}), \citet{hamedani2023astochastic} also apply an SVRG-style variance reduction scheme \citep{johnson2013accelerating} to achieve an $O(n\epsilon^{-1/2} + \epsilon^{-3/2})$ iteration complexity. By leveraging non-uniformity of the various Lipschitz/smoothness constants of the component functions, our $O(1/\epsilon)$ iteration complexity (independent of $n$) in the convex-concave regime improves over these results. One challenge of our approach in more specialized problem classes, such as \Cref{ex:fcm}, is the possible unboundedness of the dual domain in the non-strongly concave regime. Issues such as these have been addressed by line search/adaptive step size methods, which were implemented in \citet{hamedani2021aprimal} as well as in randomized block coordinate-wise algorithms in \citet{hamedani2023randomized}. Further discussions on both classical and recent works are provided in \Cref{sec:discussion}.

We also comment that a subset of technical ideas we develop in this manuscript appeared in a preliminary NeurIPS conference paper written by the same authors \citep{mehta2024drago}. Compared to this prior work, the common theme consists in using a primal-dual method featuring a multiple prox-center scheme (i.e., historical regularization) and a randomized updating scheme akin to SAGA-style variance reduction \citep{defazio2014saga}. In contrast, the algorithmic framework from this work allows for importance-based sampling and employs randomized (instead of cyclic) updates to gradient and function value tables common to SAGA-style variance-reduction approaches, further allowing for more general choices of prox-centers. These properties of our new algorithmic framework are crucial for establishing more fine-grained complexity bounds, leading to the aforementioned complexity improvements over state of the art (including our aforementioned past work).

\subsection{Organization} The remainder of the paper is organized as follows. The problem parameters are introduced formally in \Cref{sec:preliminaries}. The high-level description of the analysis and algorithm template is given in \Cref{sec:template}. Stochastic algorithms and their guarantees for generic objectives are given in \Cref{sec:nonsep:stochastic}. A modified analysis for separable problems is given in \Cref{sec:sep}. We conclude with comparisons and discussion in \Cref{sec:discussion}.

\section{Preliminaries}\label{sec:preliminaries}
We now introduce the main notation and assumptions used in this work. Let $\norm{\cdot}_\X$ denote a norm on $\R^d$. Let $\norm{\cdot}_\Y$ denote an $\ell_p$-norm on $\R^n$ for $p \in [1, 2]$. The associated dual norms are denoted by $\norm{\cdot}_{\X^*}$ and $\norm{\cdot}_{\Y^*}$ and defined in the usual way as $\norm{\vw}_{\X^*} = \sup_{\vx: \norm{\vx}_\X \leq 1} \ip{\vw, \vx}$, $\norm{\vz}_{\Y^*} = \sup_{\vy: \norm{\vy}_\Y \leq 1} \ip{\vz, \vy}$. We use the notation $A \lesssim B$ to mean $A \leq C B$ for a universal positive constant $C$, independent of $A$ and $B.$ Notation `$\gtrsim$' is defined analogously.

The following assumptions about the objective in~\eqref{eq:semilinear} are made throughout the paper. We employ block coordinate-wise updates in the upcoming stochastic algorithms, of which coordinate-wise updates are a special case with block size one. 
To do so, we introduce a partitioning of the $n$ components of $\fb$ into $N$ blocks and define the relevant Lipschitz constants for each one. In discussions of arithmetic complexity, we may assume a uniform block size $n/N$, but the analysis natively handles blocks of possibly non-uniform size.
\begin{assumption}\label{asm:smoothness}
    Assume that each $f_j: \R^d \rightarrow \R$ is convex and the indices $[n] = \br{1, \ldots, n}$ are partitioned into \emph{blocks} $(B_1, \ldots, B_N)$. 
    There exist constants $\Gb_1, \ldots, \Gb_N \geq 0$ and $\Lb_1, \ldots \Lb_N \geq 0$ such that for each $J = 1, \ldots, N$:
    \begin{align}
        \sup_{\x \in \X} \norm{\textstyle\sum_{j \in B_J} z_j \grad f_j(\x)}_{\X^*} &\leq \Gb_J \norm{\z}_{2}, \quad &\forall \z \in \R^{\abs{B_J}},\label{eq:ind_lip}\\
        \sup_{\y \in \Y} \norm{\textstyle\sum_{j \in B_J} y_j(\grad f_j(\x) - \grad f_j(\x'))}_{\X^*} &\leq \Lb_J \norm{\x - \x'}_{\X}, \quad &\forall \x, \x' \in \X.\label{eq:ind_smth}
    \end{align}
    In addition, for $\grad \fb(\x) := (\grad f_1(\x), \ldots, \grad f_n(\x))^\top \in \R^{n \times d},$ there exist $G \geq 0$ and $L \geq 0$ such that
    \begin{align}
        \sup_{\x \in \X} \norm{\grad \fb(\x)^\top \z}_{\X^*} &\leq G \norm{\z}_{\Y}, \quad &\forall \z \in \R^n,  \label{eq:agg_lip}\\
        \sup_{\y \in \Y}\norm{(\grad \fb(\x) - \grad \fb(\x'))^\top \y}_{\X^*} &\leq L \norm{\x - \x'}_{\X}, \quad &\forall \x, \x' \in \X \label{eq:agg_smth}.
    \end{align}
\end{assumption}
We will state the upcoming results in terms of $\ell_p$-norms of the vectors $\Gbb = (\Gb_1, \ldots, \Gb_N)$ and $\Lbb = (\Lb_1, \ldots, \Lb_N)$. To understand the relative scale of these constants, we note that they can also be estimated using component-wise Lipschitz and smoothness constants. Precisely, if for any $\x, \x' \in \X$ and $j \in [n]$, it holds that $\abs{f_j(\x) - f_j(\x')} \leq \Gc_j \norm{\x - \x'}_\X$, then $\Gb_J \leq \sqrt{\sum_{j \in B_J} \Gc_j^2}$. For $\Lb_J$, suppose that $\norm{\grad f_j(\x) - \grad f_j(\x')}_{\X^*} \leq \Lc_j \norm{\x - \x'}_\X$ and that for $\y = (y_1, \ldots, y_n) \in \Y$, the coordinate $y_j$ is bounded by $\Dc_j < \infty$ when $\Lc_j > 0$ (by convention, $\Lc_j \Dc_j = 0$ whenever $\Lc_j = 0$). %
This ensures that $\Lb_1, \ldots, \Lb_N < +\infty$, as $\Lb_J \leq \sum_{j \in B_J} \Dc_j \Lc_j$. 
To succinctly express our results, we also define the summary vector of Lipschitz constants
\begin{align*}
    \lambb := (\lamb_1, \ldots, \lamb_N)^\top \text{ with } \lamb_I := \sqrt{\Gb_I^2 + \Lb_I^2} \text{ for } I = 1, \ldots, N,
\end{align*}
which can be seen as a block-wise aggregation of the vectors $\Gbb$ and $\Lbb$. 
We place the following standard assumption on the uncoupled components of the objective, where we interpret values of the strong convexity modulus being zero ($\mu = 0$ or $\nu = 0$) as the corresponding function being simply convex. Let $\ri(\cdot)$ denote the relative interior of a set.
\begin{assumption}\label{asm:str_cvx}
    Assume that $\phi$ is proper (with $\X \sse \dom(\phi) := \br{\x \in \R^d: \phi(\x) < +\infty}$ and $\X \cap \ri(\dom(\phi))$ non-empty), closed (i.e., has a closed epigraph in $\R^{d+1}$), and $\mu$-strongly convex ($\mu \geq 0$) with respect to $\norm{\cdot}_\X$, that is, for any $\s \in \partial \phi(\u)$, we have that $\phi(\z) \geq \phi(\u) + \ip{\s, \z - \u} + \frac{\mu}{2}\norm{\z - \u}_\X^2$. Similarly, $\psi$ is proper (with $\Y \sse \dom(\psi)$ and $\Y \cap \ri(\dom(\psi))$ non-empty), closed, and $\nu$-strongly convex ($\nu \geq 0$) with respect to $\norm{\cdot}_\Y$. 
\end{assumption}

Next, we rely upon the following definition of Bregman divergences for possibly non-differentiable distance generating functions. Consider a convex subset $\Z \sse \R^m$, and let $\varphi$ be a proper, closed, and $1$-strongly convex function satisfying $\Z \sse \dom(\varphi)$. 
Define the \emph{Bregman divergence} $\Delta_\varphi: \dom(\varphi) \times \ri(\dom(\varphi)) \rightarrow \R$ as 
\begin{align*}
    \breg{\varphi}{\z}{\z'} := \varphi(\z) - \varphi(\z') - \ip{\grad \varphi(\z'), \z - \z'}.
\end{align*}
The notation $\grad \varphi(\z') \in \partial \varphi(\z')$ denotes an arbitrary, but consistently chosen subgradient at $\z'$ when applied to a convex but possibly non-differentiable function. This slight modification is made for purely technical reasons, as we perform a mirror descent-style analysis with Bregman divergences generated by the (possibly non-smooth) component functions $\phi$ and $\psi$. \Cref{lem:3pt} provides a modified three-point inequality used in both the upper and lower bounds on the objective used in bounding the initial gap estimates. 
\begin{restatable}{lemma}{threepoint}\label[lemma]{lem:3pt}
    Let $h$, $g$, and $\varphi$ be proper, closed, and convex functions whose domains contain $\Z$ and that map to $\R \cup \br{+\infty}$. Assume that $g$ is relatively $\gamma$-strongly convex with respect to $\varphi$ on $\Z$, i.e., $\Delta_g(\u, \z) \geq \gamma\Delta_\varphi(\u, \z)$ for $\gamma \geq 0$ and $\u, \z \in \Z$.  Let $A \geq 0$, $a > 0$, $\gamma_0 > 0$ be constants, and let $\z_1, \ldots, \z_r \in \ri(\dom(\varphi))$. Let
    \begin{align}
        \z^+ = \argmin_{\u \in \Z} \br{m(\u) := h(\u) + a g(\u) + \tfrac{A\gamma + \gamma_0}{2} \textstyle \sum_{i=1}^r w_i \breg{\varphi}{\u}{\z_i}},\label{eq:proximal}
    \end{align}
    where each $w_i \geq 0$ and $\sum_{i=1}^r w_i = 1$.
    Then, for any $\u \in \Z$,
    \begin{align*}
        m(\u) \geq m(\z^+) + \p{\tfrac{(A + a)\gamma + \gamma_0}{2}}\breg{\varphi}{\u}{\z^+} + \tfrac{a\gamma}{2} \breg{\varphi}{\u}{\z^+}. 
    \end{align*}
\end{restatable}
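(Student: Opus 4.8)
The plan is to derive the inequality from the first-order optimality condition for the proximal minimizer $\z^+$ together with three elementary lower bounds on the three summands of $m$. The first observation is that the two Bregman terms on the right-hand side have combined coefficient $\tfrac{(A+a)\gamma+\gamma_0}{2} + \tfrac{a\gamma}{2} = \tfrac{A\gamma+\gamma_0}{2} + a\gamma$, so it suffices to prove $m(\u) \ge m(\z^+) + \bigl(\tfrac{A\gamma+\gamma_0}{2} + a\gamma\bigr)\breg{\varphi}{\u}{\z^+}$; the particular way the statement splits this coefficient is immaterial. Throughout I would write $\kappa := \tfrac{A\gamma+\gamma_0}{2} > 0$ and $\Phi(\u) := \sum_{i=1}^r w_i \breg{\varphi}{\u}{\z_i}$, so that $m = h + a g + \kappa\Phi$.

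The key fact I would isolate is an \emph{exact} Bregman three-point identity for $\Phi$. Because $\sum_i w_i = 1$, expanding the definition of each summand shows $\Phi(\u) = \varphi(\u) - \ip{\sum_i w_i \grad\varphi(\z_i),\, \u} + c$ for a constant $c$ independent of $\u$; that is, $\Phi$ equals $\varphi$ plus an affine function, and hence $\breg{\Phi}{\cdot}{\cdot} = \breg{\varphi}{\cdot}{\cdot}$. Taking the subgradient $\grad\Phi(\z^+) := \grad\varphi(\z^+) - \sum_i w_i \grad\varphi(\z_i) \in \partial\Phi(\z^+)$, this yields the equality $\Phi(\u) = \Phi(\z^+) + \ip{\grad\Phi(\z^+),\, \u - \z^+} + \breg{\varphi}{\u}{\z^+}$ for every $\u \in \Z$.

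Next I would record that $\z^+$ is well-defined and unique ($m$ is coercive and strongly convex, since $\kappa\varphi$ dominates an affine minorant of the proper convex function $h + a g$), and then write the first-order optimality condition $0 \in \partial h(\z^+) + a\,\partial g(\z^+) + \kappa\,\partial\Phi(\z^+) + \Ncal_{\Z}(\z^+)$, invoking the relative-interior hypotheses of \Cref{asm:str_cvx} to justify the subdifferential sum rule. This produces subgradients $s_h \in \partial h(\z^+)$, $s_g \in \partial g(\z^+)$ and a choice of $\grad\varphi(\z^+)$ for which $\ip{s_h + a s_g + \kappa\grad\Phi(\z^+),\, \u - \z^+} \ge 0$ for all $\u \in \Z$; I then declare these to be the ``consistently chosen'' subgradients used in every Bregman divergence evaluated at $\z^+$. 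Combining (i) $h(\u) \ge h(\z^+) + \ip{s_h,\, \u - \z^+}$ (convexity of $h$), (ii) $g(\u) \ge g(\z^+) + \ip{s_g,\, \u - \z^+} + \gamma\breg{\varphi}{\u}{\z^+}$ (relative $\gamma$-strong convexity of $g$ with respect to $\varphi$, written at $\z^+$, so that $\breg{g}{\u}{\z^+} \ge \gamma\breg{\varphi}{\u}{\z^+}$), and (iii) the identity from the previous paragraph, each weighted by $1$, $a$, $\kappa$ respectively, and then discarding the nonnegative linear term via optimality, gives $m(\u) \ge m(\z^+) + (a\gamma + \kappa)\breg{\varphi}{\u}{\z^+}$, which is the claim.

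The step I expect to require the most care is the subgradient bookkeeping: I must ensure that the subgradient $\grad\varphi(\z^+)$ realizing the optimality inclusion is literally the same one appearing in the relative-strong-convexity inequality $\breg{g}{\u}{\z^+} \ge \gamma\breg{\varphi}{\u}{\z^+}$ and in the target divergence $\breg{\varphi}{\u}{\z^+}$, so that all occurrences coincide --- this is exactly what the paper's ``consistently chosen subgradient'' convention is for, and it is the only reason that convention is needed. The remaining technical points --- that $\z^+ \in \ri(\dom\varphi)$ so the divergences based at $\z^+$ are defined (forced by the coercive $\kappa\Phi$ term), and that the subdifferential sum rule applies under \Cref{asm:str_cvx} --- are routine but should be stated explicitly; everything else reduces to a one-line computation.
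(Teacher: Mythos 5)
Your proposal is correct and is essentially the paper's own argument: the paper writes $m(\u) = m(\z^+) + \ip{\nabla m(\z^+), \u-\z^+} + \Delta_m(\u,\z^+)$, drops the inner product by optimality, and decomposes $\Delta_m$ into $\Delta_h \ge 0$, $a\Delta_g \ge a\gamma\Delta_\varphi$, and $\tfrac{A\gamma+\gamma_0}{2}\sum_i w_i \Delta_{\Delta_\varphi(\cdot,\z_i)} = \tfrac{A\gamma+\gamma_0}{2}\Delta_\varphi$ — exactly your three weighted inequalities plus the observation that each $\Delta_\varphi(\cdot,\z_i)$ is $\varphi$ plus an affine function. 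Your version merely makes explicit the subgradient bookkeeping (sum rule, consistent choice of $\grad\varphi(\z^+)$) that the paper handles implicitly via its ``consistently chosen subgradient'' convention and its standing assumption that $\z^+ \in \ri(\dom(\varphi))$.
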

\begin{proof}
    By the definition of the Bregman divergence generated by $m$, we have that
    \begin{align*}
        m(\u) &= m(\z^+) + \ip{\nabla m(\z^+), \u - \z^+} + \Delta_{m}(\u, \z^+),\\
        &\geq m(\z^+) + \Delta_{m}(\u, \z^+),
    \end{align*}
    where we use that $\ip{\nabla m(\z^+), \u - \z^+} \geq 0$ for any subgradient $\nabla m(\z^+)$ as $\z^+ \in \argmin_{\u \in \Z} m(\u)$.
    Then, by using the definition of $m$,
    and that $\Delta_{\Delta_\varphi(\cdot, \z)} = \Delta_\varphi$ for any fixed $\z \in \ri(\dom(\varphi))$, we have that
    \begin{align*}
        m(\u) &\geq m(\z^+) + \Delta_{m}(\u, \z^+)\\
        &= m(\z^+) + \Delta_h(\u, \z^+) + a\Delta_{g}(\u, \z^+) + \frac{A\gamma + \gamma_0}{2} \sum_{i=1}^r w_i \breg{\Delta_{\varphi}(\cdot, \z_i)}{\u}{\z^+}\\
        &= m(\z^+) + \Delta_h(\u, \z^+) + a\Delta_{g}(\u, \z^+) + \frac{A\gamma + \gamma_0}{2} \breg{\varphi}{\u}{\z^+}.
    \end{align*}
    Use $\Delta_h(\u, \z^+) \geq 0$ and then relative strong convexity $a \Delta_g(\u, \z^+) \geq a \gamma\Delta_\varphi(\u, \z^+)$ to prove the desired result.
\end{proof}
Henceforth, we use the notation $\breg{\X}{\cdot}{\cdot}$ to denote the Bregman divergence on $\X$ that is both $1$-strongly convex with respect to $\norm{\cdot}_\X$ and that satisfies $\Delta_\phi \geq \mu \Delta_{\X}$ (i.e., $\phi$ is relatively $\mu$-strongly convex with respect to $\Delta_{\X}$). As an example, this is satisfied by $\Delta_\X := \Delta_{\phi/\mu}$ for $\mu > 0$. We define $\breg{\Y}{\cdot}{\cdot}$ analogously. 
As a technical consideration, we also assume that the unique solution of~\eqref{eq:proximal} lies in $\ri(\dom(\varphi))$, which is satisfied for common choices of the generator $\varphi$. 
Finally, we introduce additional structure on the problem~\eqref{eq:semilinear} which can be exploited when available.
\begin{definition}\label[definition]{def:separable}
    We call $\Lcal$ a \emph{dual-separable objective} if the dual component decomposes as $\psi(\y) = \sum_{J=1}^N \psi_J(\y_{J})$ where $\y_J$ denotes the components of $\y \in \Y$ corresponding to the indices in block $B_J$. We call $\X \times \Y$ a \emph{dual-separable feasible set} if $\Y = \Y_1 \times \ldots \times \Y_N$ and $\y_J \in \Y_J$ for $J = 1, \ldots, N$. We call the problem~\eqref{eq:semilinear} a \emph{dual-separable problem} if its objective and feasible set are both dual-separable.
\end{definition}
Dual-separability of the objective is commonly satisfied, such as when $\psi$ represents an $\ell_2$ or negative entropy penalty. 
In this case, we have that $\breg{\Y}{\y}{\y'} := \sum_{J=1}^N \breg{J}{\y_J}{\y'_J}$, where each $\breg{J}{\cdot}{\cdot}$ is a Bregman divergence on $\dom(\psi_J) \times \ri(\dom(\psi_J))$. As before, $\psi$ is relatively $\nu$-strongly convex with respect to $\Delta_{\Y}$.
In later sections, we may also use the subscript on $\y_k \in \Y$ to denote a particular time index of an algorithm, as opposed to the block index $J$ on $\y_J \in \Y_J$; the difference will be clear from context. Dual-separability of the feasible set is a less common assumption. It is not satisfied, for instance, on simplicial domains such as the one in Example 1. %
Based on this observation, \Cref{sec:nonsep:stochastic} and \Cref{sec:sep} are dedicated to the proposed algorithms for non-separable and separable problems, respectively.

We conclude this section with a discussion of example problems that can be addressed in the considered framework of dual linear min-max optimization problems. 

\begin{example}[Distributionally robust optimization (DRO)]\label{ex:dro}
    The DRO problem can be written as
    \begin{align*}
        \min_{\x \in \X} \max_{\substack{\y \geq 0 \\ \ones^\top \y = 1}} \sum_{j=1}^n y_j f_j(\x)- \psi(\y) + \phi(\x), 
    \end{align*}
    where $\x$ represents the parameters of a predictive model, each $f_j$ represents the loss incurred on training example $j$, and  $\phi$ is a regularizer (e.g., the $\ell_2$-norm). The dual vector $\y \in \Y$ denotes a collection of weights in the $(n-1)$-probability simplex and $\psi(\y)$ penalizes $\y$ from shifting too far from the original uniform weights $\ones / n = (1/n, \ldots, 1/n)$ over the training set. This can either be a soft penalty (e.g., $\psi(\y)$ is the Kullback-Leibler divergence between $\y$ and $\ones/n$) or a hard penalty that further constrains the dual feasible set.
\end{example}

\begin{example}[Fully composite optimization]\label{ex:fco}
    The fully composite optimization problem is defined by
    \begin{align*}
        \min_{\x \in \X} F(\boldsymbol{h}(\x), \x),
    \end{align*}
    where $\boldsymbol{h}: \X \rightarrow \R^{n-d}$ is component-wise convex and $F: \R^n \rightarrow \R \cup \br{+\infty}$ is closed and convex. 
    It is assumed that $\boldsymbol{h}$ is smooth and ``hard'' to compute, whereas $F$ may be non-differentiable but ``easy'' to compute. 
    This formulation can be viewed as a generalization of the typical additive composite problem in which $n - d = 1$ and $F(\boldsymbol{h}(\x), \x) = \boldsymbol{h}(\x) + g(\x)$ for a smooth component $\boldsymbol{h}$ and non-smooth component $g$.
    By taking the Fenchel conjugate $F^*(\y) := \sup_{\z \in \R^{n-d} \times \X} \ip{\z, \y} - F(\z)$ of $F$, we achieve the formulation~\eqref{eq:semilinear} with $\fb(\x) := (\boldsymbol{h}(\x), \x) \in \R^n$, $\psi(\y) = F^*(\y)$, and $\phi(\x) = 0$. To ensure that the overall problem is convex-concave we also assume that $F(\cdot, \x)$ is monotone in that $\u \leq \v \text{ element-wise } \implies F(\u, \x) \leq F(\v, \x)$. 
\end{example}

\begin{example}[Problems with functional constraints]\label{ex:fcm}
    Consider the classical convex minimization problem with constraints defined by sublevel sets of convex functions
    \begin{align*}
        \min_{\x \in \X} \phi(\x) \text{ s.t. } f_j(\x) \leq 0 \text{ for all } j = 1, \ldots, n.
    \end{align*}
    Then, the Lagrangian formulation yields the expression~\eqref{eq:semilinear} by letting $\y \in \Y = \R^n_+$ denote the Lagrange multipliers and setting $\psi \equiv 0$. The objective~\eqref{eq:semilinear} also encompasses the related setting of ``soft'' functional constraints, where we set $\psi$ as any $\nu$-strongly convex function $\psi$ with $\nu > 0$ to produce a faster convergence rate at an approximation cost governed by the parameter $\nu$.
    In this case, the primal solution resulting from this smoothed problem may only approximately satisfy the functional constraints.
\end{example}

\begin{example}[Maximal eigenvalue minimization]\label{ex:eigenvalue}
    Given a collection of $d$ symmetric matrices $\A_1, \ldots, \A_d \in \R^{m \times m}$, the classical problem of minimizing the maximal eigenvalue $\lambda_{\max{}}(\sum_{i=1}^d x_i \A_i)$ in $\x = (x_1, \ldots, x_d) \in \R^d$ can be formulated \citep{nesterov2007smoothing, baes2013randomized} as the saddle-point problem 
    \begin{align*}
        \min_{\x \in \X} \max_{\y \in \Y} \sum_{i=1}^d \operatorname{tr}\p{(\A_i x_i) \y},
    \end{align*}
    where $\Y$ is the set of positive semi-definite matrices satisfying $\operatorname{tr}(\y) = 1$ and $\X$ is any convex, compact subset of $\R^d$. Here, $n = m^2$ depends quadratically on the height/width of the matrices $(\A_i)_{i=1}^d$, and blocks may naturally correspond to matrix structure, such as rows or columns. This constitutes a non-separable matrix game, a specific problem class discussed in \Cref{sec:discussion}. 
\end{example}

\section{Our Method and a General Analysis Template}\label{sec:template}
We consider an algorithm to be a sequence of primal-dual iterates $(\x_k, \y_k)_{k\geq 0}$, 
with fixed initial point $(\x_0, \y_0)$. This sequence may be random, in which case the relevant probabilistic information is introduced when we analyze stochastic algorithms. As is standard for primal-dual algorithm of the same type \citep{chambolle2011afirstorder,alacaoglu2020random,song2021variance}, the analysis tracks the \emph{primal-dual gap} function %
at fixed $\u \in \X$ and $\v \in \Y$:
\begin{align}
    \gap^{\u, \v}(\x, \y) := \Lcal(\x, \v) - \Lcal(\u, \y).
    \label{eq:gap}
\end{align}
To measure suboptimality in the strongly convex case (when $\mu > 0$ and $\nu > 0$), we set $\u = \x_\star$ and $\v = \y_\star$, where $(\x_\star, \y_\star)$ denotes the unique saddle point of~\eqref{eq:semilinear}, satisfying
\begin{align*}
    \Lcal(\x_\star, \y) +\frac{\nu}{2}\norm{\vy - \vy_\star}_\Y^2 \leq \Lcal(\x_\star, \y_\star) \leq \Lcal(\x, \y_\star) -\frac{\mu}{2}\norm{\vx - \vx_\star}_\X^2
\end{align*}
for all $(\x, \y) \in \X \times \Y$. When $\mu = \nu = 0$, we measure suboptimality using the supremum of (possibly the expectation of)~\eqref{eq:gap} over $\u \in \mc{U}$ and $\v \in \mc{V}$ for compact sets $\mc{U} \sse \X$ and $\mc{V} \sse \Y$. When $\mu = 0$ or $\nu = 0$ exclusively, we naturally take the supremum over $\u$ or $\v$ exclusively.

We first fix $\u, \v$ and aim to show that $\limsup_{t \rightarrow \infty}\gap^{\u, \v}(\x_t, \y_t) \leq 0$ (possibly in expectation), with a convergence rate in terms of the problem constants from \Cref{sec:preliminaries} and iteration count $t \geq 0$. To this end, we introduce an averaging sequence $(a_k)_{k \geq 1}$ of positive constants with $a_0 = 0$, and their aggregation $A_t := \sum_{k=0}^t a_k$, and then pursue an upper bound of the form
\begin{align}
    \sum_{k=1}^t a_k \gap^{\u, \v}(\x_k, \y_k) \leq G_0(\u, \v),
    \label{eq:gap_bound}
\end{align}
where $G_0(\u, \v)$ is a constant independent of $t$, so that when dividing by $A_t$, the average expected gap decays at rate $A_t^{-1}$. Accordingly, we wish for $A_t$ to grow as fast as possible with $t$. By way of convexity, we have that $\gap^{\u, \v}(\tx_t, \ty_t) \leq G_0(\u, \v)/A_t$ for $(\tx_t, \ty_t) := A_t^{-1} \sum_{k=1}^t a_k (\x_k, \y_k)$, which can be returned by the algorithm to realize the gap bound~\eqref{eq:gap_bound}. We may conclude by taking the supremum of $G_0(\u, \v)$. In the randomized scenarios, we also discuss in \Cref{sec:discussion} how the analysis can be adapted to an even stronger convergence criterion for which the supremum is taken prior to the expectation. Because many of the technical ideas remain similar when proving convergence with respect to this stronger criterion, we describe only the parts that change in \Cref{sec:discussion}.

\begin{algorithm}[t]
\setstretch{1.2}
   \caption{Template Method}
   \label{algo:drago_v2}
    \begin{algorithmic}[1]
       \State {\bfseries Input:} Initial point $(\x_0, \y_0)$, averaging sequence $(a_k)_{k=0}^t$, non-negative weights $(\gamma_I)_{I=1}^N$ that sum to one, balancing sequences $(\wP_k)_{k=1}^t$ and $(\wD_k)_{k=1}^t$, functions $\sub_1$, $\sub_2$, and $\sub_3$.
       \State Initialize the comparison points $\red{\hxb_{0, I}} = \x_0$ for all $I \in [N]$ and $\red{\hyb_0} = \y_0$.
       \For {$k=1$ {\bfseries to} $t$}
           \State $\sub_1$: Compute $\blue{\g_{k-1}}$ using stored information and oracle calls to $\grad f_i(\x_{k-1})$, $i \in [n]$%
           \State Perform the primal update
           \begin{align}
           \x_k &= \argmin_{\x \in \X} \Big\{a_k \ip{\blue{\g_{k-1}}, \x} + a_k\phi(\x) \notag \\
           &\quad + (A_{k-1}\mu + \mu_0)\Big(\underbrace{\tfrac{1-\wP_k}{2}\breg{\X}{\x}{\x_{k-1}}}_{\text{standard proximity term}} + \underbrace{\tfrac{\wP_k}{2}\textstyle\sum_{I=1}^N \gamma_I \breg{\X}{\x}{\red{\hxb_{k-1, I}}}}_{\text{primal historical regularization}}\Big)\Big\}.\label{eq:prim_update}
           \end{align}
           \State $\sub_2$: Compute $\blue{\bfb_{k-1/2}}$ using stored information and some calls to $f_1(\x_{k}), \ldots, f_n(\x_{k})$.
           \State Perform the dual update
           \begin{align}
               \y_k &= \argmax_{\y \in \Y} \Big\{ a_k \ip{\y, \blue{\bfb_{k-1/2}}} - a_k\psi(\y) \notag\\ 
               &\quad - (A_{k-1}\nu + \nu_{0}) \Big(\underbrace{\tfrac{1-\wD_k}{2} \breg{\Y}{\y}{\y_{k-1}}}_{\text{standard proximity term}} + \underbrace{\tfrac{\wD_k}{2} \breg{\Y}{\y}{\red{\hyb_{k-1}}}}_{\text{dual historical regularization}}\Big)\Big\}. \label{eq:dual_update}
           \end{align}
           \State $\sub_3$: Update comparison points $(\red{\hxb_{k, I}})_{I=1}^N$ and $\red{\hyb_k}$.
       \EndFor
       \Return $A_t^{-1} \sum_{k=1}^t a_k(\x_k, \y_k)$.
    \end{algorithmic}
\end{algorithm}

For any algorithm we consider, the analysis will proceed by constructing an upper bound on $a_k\gap^{\u, \v}(\x_k, \y_k)$ containing telescoping and non-positive terms, by first lower bounding $a_k\Lcal(\u, \y_k)$ and upper bounding $a_k\Lcal(\x_k, \v)$. As we will see, the update for $\x_k$ will be used to produce the lower bound while the update for $\y_k$ will be used to produce the upper bound. The update rules are motivated directly by the analysis. Similar steps will take place in the stochastic setting, except using the expectation of~\eqref{eq:gap} under algorithmic randomness. We start with an arbitrary point $(\x_0, \y_0) \in \ri(\dom(\phi))\times \ri(\dom(\psi))$. The parameters $\mu_0 > 0$ and $\nu_0 > 0$ appearing below are employed in order to handle the strongly convex and non-strongly convex settings in a unified manner. 

This general, high-level idea and analysis template are in line with prior work providing constructive arguments for the analysis of optimization methods \citep{diakonikolas2019approximate,mehta2024drago,li2024learning,diakonikolas2025block}. As such, it provides a clear guiding principle for the analysis and motivation for the algorithmic choices. It is of note, however, that while the general principle is common to all these works, the specifics of the analysis and associated algorithms differ significantly, as the technical obstacles they need to address are problem-specific. For instance, the stochastic algorithms in the present work have a unique combination of randomized updates and historical regularization that are conceptually novel and interesting in their own right. It is the combination of \emph{both} non-uniform sampling and non-uniform regularization that leads to complexity improvements even in more specific problem classes such as bilinearly coupled problems. For the separable case, our proof technique relies on an auxiliary sequence of dual variables that offers an extension of previous meta-analyses when using coordinate-wise updates.  

\subsection{Algorithm Template and a First Gap Bound}\label{sec:template:first}
We fix the convention that the update for $\x_k$ occurs before the update for $\y_k$. Thus, we require that only information available up to and including time $k-1$ is used in the update. In the stochastic setting, this requirement will be formalized in the language of measurability. Both the primal and dual updates will resemble those of a proximal gradient-type algorithm, wherein $\x_k$ and $\y_k$ are defined by minimizing or maximizing an approximation of~\eqref{eq:semilinear} with a proximity term (i.e.~a Bregman divergence). In the primal update, the proximity term promotes $\x_k$ being close to not only $\x_{k-1}$, but several additional to-be-specified comparison points $\hxb_{k-1, 1}, \ldots, \hxb_{k-1, N}$. Similarly, $\y_k$ will be made close to $\y_{k-1}$ along with a single comparison point $\hyb_{k-1}$. The use of multiple comparison points is in fact the motivation for the modified three-point inequality (\Cref{lem:3pt}). The remaining components to specify are $\g_{k-1} \in \R^d$, a vector which will be used to linearize the objective~\eqref{eq:semilinear} in the primal update, and $\bfb_{k-1/2} \in \R^n$, an analogous vector used in the dual update. The subscript $k-1/2$ indicates that ``half'' of the information in iteration $k$ (namely, the value of $\x_k$) can be used in the update. The components introduced so far generate a template algorithm which can combine them in various ways; the pseudocode for this method is shown in \Cref{algo:drago_v2}, in which the algorithm-specific content is abstracted into three subroutines.

The hyperparameters are shown explicitly to better accompany the theoretical analysis. They are set to specific values over the course of the proofs. 
We motivate the updates~\eqref{eq:prim_update} and~\eqref{eq:dual_update} using a lower bound on $a_k\Lcal(\u, \y_k)$ and an upper bound on $a_k\Lcal(\x_k, \v)$. Several terms will appear that either telescope when summed or are used to cancel errors incurred at each iteration. For the reader's convenience, we summarize this notation below.
\begin{mdframed}[userdefinedwidth=\linewidth, align=center, linewidth=0.3mm]
    \textbf{Notation:} Throughout the analyses for each algorithm, the primal and dual ``distance-to-opt'' terms will be written as
    \begin{align}
        \TP_k := (A_{k}\mu + \mu_0)\breg{\X}{\u}{\x_{k}}, \quad \TD_k := (A_{k}\nu + \nu_0)\breg{\Y}{\u}{\y_{k}},\label{eq:telescoping_terms}
    \end{align}
    indicating that they telescope when summed.
    The bounds also produce the negation of the terms
    \begin{align}
        \CP_k := (A_{k-1}\mu + \mu_0)\breg{\X}{\x_k}{\x_{k-1}}, \quad \CD_k := (A_{k-1}\nu + \nu_0)\breg{\Y}{\y_k}{\y_{k-1}},\label{eq:cancellation_terms}
    \end{align}
    which appear in the gap function bound and are used for canceling errors that appear when controlling the primal-dual gap. Analogous terms appear based on the comparison points instead of the iterates. That is, consider for each $I \in [N]$ the additional telescoping terms
    \begin{align}
        \hTP_{k, I} := (A_k\mu + \mu_0)\breg{\X}{\u}{\hxb_{k, I}}, \quad \hTD_k := (A_{k}\nu + \nu_0)\breg{\Y}{\v}{\hyb_{k}},\label{eq:stoch:telescoping_terms}
    \end{align}
    along with similar cancellation terms
    \begin{align}
        \hCP_{k, I} := (A_{k-1}\mu + \mu_0)\breg{\X}{\x_k}{\hxb_{k-1, I}}, \quad \hCD_k := (A_{k-1}\nu + \nu_0)\breg{\Y}{\y_k}{\hyb_{k-1}}.\label{eq:stoch:cancellation_terms}
    \end{align}
    Finally, we will also define the inner product terms
    \begin{align}
        \IP_k = a_k \ip{\grad \fb(\x_k)^\top \y_k - \g_{k-1}, \u - \x_k}, \quad \ID_k = a_k\ip{\fb(\x_k) - \bfb_{k-1/2}, \v - \y_k}, \label{eq:inner_prod_terms}
    \end{align}
    which will comprise the errors that are cancelled by the terms above.
\end{mdframed}

To achieve the lower bound, we compare the objective to the one minimized by $\x_k$ and use properties of Bregman divergences to produce telescoping terms akin to a mirror descent-style analysis. 
\begin{lemma}\label[lemma]{lem:lower}
    For any $k \geq 1$, let $\g_{k-1} \in \R^d$ and $\wP_k, \wP_{k-1} \in [0, 1)$ such that $\wP_{k-1} \leq \wP_{k}$.
    For $k \geq 1$, if $\x_k$ is defined via the update~\eqref{eq:prim_update}, then it holds that
    \begin{align}
        a_k\Lcal(\u, \y_k)
        &\geq a_k \mc{L}(\x_k, \y_k)  + \IP_k \label{eq:stoch:inner_prod_primal}\\
        &\quad + \p{\tfrac{1-\wP_k}{2}\TP_k - \tfrac{1-\wP_{k-1}}{2}\TP_{k-1}} + \tfrac{\wP_k}{2}\p{\TP_k -  \textstyle\sum_{I=1}^N \gamma_I\hTP_{k-1, I}}\notag\\
        &\quad + \tfrac{1-\wP_k}{2}\CP_k + \tfrac{\wP_k}{2}\textstyle\sum_{I=1}^N \gamma_I\hCP_{k, I} + \frac{a_k \mu}{2} \breg{\X}{\u}{\x_k}. \label{eq:stoch:telescope_primal}
    \end{align}
\end{lemma}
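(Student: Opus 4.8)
The plan is to read the claimed inequality off directly from the optimality of the primal update~\eqref{eq:prim_update}, using the modified three-point inequality~\Cref{lem:3pt}, and then to convert the resulting bound on $a_k\phi(\u)$ into a bound on $a_k\Lcal(\u,\y_k)$ via convexity of the component functions $f_j$.

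First I would cast~\eqref{eq:prim_update} into the shape of the prox step~\eqref{eq:proximal} in \Cref{lem:3pt}, with $\Z=\X$, $h(\x)=a_k\ip{\g_{k-1},\x}$ (linear, hence proper, closed, convex), $g=\phi$ with multiplier $a=a_k$, and $\varphi$ the generator of $\breg{\X}{\cdot}{\cdot}$; since $\breg{\X}{\cdot}{\cdot}$ is chosen so that $\phi$ is relatively $\mu$-strongly convex with respect to it, this fixes $\gamma=\mu$, $A=A_{k-1}$, $\gamma_0=\mu_0$, and hence $A\gamma+\gamma_0=A_{k-1}\mu+\mu_0$ exactly as in~\eqref{eq:prim_update}. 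The $r=N+1$ anchors are $\z_0=\x_{k-1}$ with weight $w_0=1-\wP_k$ and $\z_I=\hxb_{k-1,I}$ with weight $w_I=\wP_k\gamma_I$ for $I\in[N]$, which are nonnegative and sum to $(1-\wP_k)+\wP_k\sum_{I=1}^N\gamma_I=1$. Since $\z^+=\x_k$ and $A_{k-1}+a_k=A_k$, \Cref{lem:3pt} gives, for every $\u\in\X$,
\[
m(\u)\;\ge\;m(\x_k)+\tfrac{A_k\mu+\mu_0}{2}\breg{\X}{\u}{\x_k}+\tfrac{a_k\mu}{2}\breg{\X}{\u}{\x_k}.
\]

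Next I would expand $m(\u)$ and $m(\x_k)$ explicitly. Using the summary notation~\eqref{eq:telescoping_terms},~\eqref{eq:cancellation_terms},~\eqref{eq:stoch:telescoping_terms}, and~\eqref{eq:stoch:cancellation_terms}, the proximity part of $m(\u)$ equals $\tfrac{1-\wP_k}{2}\TP_{k-1}+\tfrac{\wP_k}{2}\sum_{I=1}^N\gamma_I\hTP_{k-1,I}$, the proximity part of $m(\x_k)$ equals $\tfrac{1-\wP_k}{2}\CP_k+\tfrac{\wP_k}{2}\sum_{I=1}^N\gamma_I\hCP_{k,I}$, and the term $\tfrac{A_k\mu+\mu_0}{2}\breg{\X}{\u}{\x_k}$ above equals $\tfrac12\TP_k=\tfrac{1-\wP_k}{2}\TP_k+\tfrac{\wP_k}{2}\TP_k$. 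Cancelling the common $a_k\ip{\g_{k-1},\cdot}$ and $a_k\phi(\cdot)$ contributions and isolating $a_k\phi(\u)$ on the left then gives a lower bound on $a_k\phi(\u)$ in terms of $a_k\phi(\x_k)+a_k\ip{\g_{k-1},\x_k-\u}$, the cancellation terms, the telescoping difference $\tfrac{1-\wP_k}{2}(\TP_k-\TP_{k-1})$, the quantity $\tfrac{\wP_k}{2}\bigl(\TP_k-\sum_{I=1}^N\gamma_I\hTP_{k-1,I}\bigr)$, and $\tfrac{a_k\mu}{2}\breg{\X}{\u}{\x_k}$. To match the stated term $\tfrac{1-\wP_k}{2}\TP_k-\tfrac{1-\wP_{k-1}}{2}\TP_{k-1}$, I would invoke the hypothesis $\wP_{k-1}\le\wP_k$ together with $\TP_{k-1}\ge0$ to get $\tfrac{1-\wP_k}{2}\TP_{k-1}\le\tfrac{1-\wP_{k-1}}{2}\TP_{k-1}$; subtracting the larger quantity only weakens the lower bound, which is the direction we need.

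Finally, since $\Lcal(\u,\y_k)-\Lcal(\x_k,\y_k)=\ip{\y_k,f(\u)-f(\x_k)}+\phi(\u)-\phi(\x_k)$, I would substitute the bound on $a_k\phi(\u)$ and treat the residual $a_k\ip{\y_k,f(\u)-f(\x_k)}+a_k\ip{\g_{k-1},\x_k-\u}$ using convexity of each $f_j$: for $j$ with $f_j$ nonlinear the constraint $\y_k\in\Y$ forces $y_{k,j}\ge0$, so $y_{k,j}(f_j(\u)-f_j(\x_k))\ge y_{k,j}\ip{\grad f_j(\x_k),\u-\x_k}$, while for linear $f_j$ equality holds for either sign of $y_{k,j}$; summing over $j$ gives $\ip{\y_k,f(\u)-f(\x_k)}\ge\ip{\grad f(\x_k)^\top\y_k,\u-\x_k}$, and combining with $a_k\ip{\g_{k-1},\x_k-\u}$ produces precisely $\IP_k$ from~\eqref{eq:inner_prod_terms}. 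Assembling the pieces yields the stated inequality. Most of this is bookkeeping; the points I expect to require the most care are the identification of roles in \Cref{lem:3pt}—that $\phi$ (not the linear term) is the relatively strongly convex summand, and that $(A_{k-1}+a_k)\mu+\mu_0$ reconstitutes the coefficient of $\TP_k$—and using the monotonicity $\wP_{k-1}\le\wP_k$ in exactly the right direction; the convexity step is where the structural hypothesis $\Y\subseteq\{\y:y_j\ge0\text{ if }f_j\text{ is nonlinear}\}$ is indispensable.
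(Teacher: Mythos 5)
Your proposal is correct and follows essentially the same route as the paper's proof: both apply the modified three-point inequality (\Cref{lem:3pt}) with $A=A_{k-1}$, $a=a_k$, $\gamma=\mu$, $\gamma_0=\mu_0$ to the primal update, use convexity of $\x\mapsto\ip{\y_k,f(\x)}$ (guaranteed by the sign constraint on $\Y$) to linearize the coupled term and produce $\IP_k$, and invoke $\wP_{k-1}\le\wP_k$ to replace $-\tfrac{1-\wP_k}{2}\TP_{k-1}$ by $-\tfrac{1-\wP_{k-1}}{2}\TP_{k-1}$. The only difference is cosmetic ordering — you apply the three-point lemma before the convexity step, while the paper linearizes first — and all the bookkeeping in your outline is consistent with the paper's definitions.
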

\begin{proof}
    Because $\y_k$ is observed in $\Y$, we have that $\x \mapsto \ip{\y_k, \fb(\x)}$ is convex and differentiable. As a result,
    \begin{align}
        &a_k\Lcal(\u, \y_k)
        = a_k(\ip{\y_{k}, \fb(\u)} - \psi(\y_k) + \phi(\u)) \notag\\
        &\geq a_k\ip{\y_k, \fb(\x_k) + \grad \fb(\x_k)(\u - \x_k)} - a_k\psi(\y_k) + a_k\phi(\u). \label{eq:lower:cvx}
    \end{align}
    Then, add and subtract terms from the objective defining~\eqref{eq:prim_update}:
    \begin{align*}
        a_k\Lcal(\u, \y_k) &\geq a_k \ip{\g_{k-1}, \u - \x_k} + a_k \phi(\u) + a_k \ip{\grad \fb(\x_k)^\top \y_k - \g_{k-1}, \u - \x_k} \\
        &\quad + \tfrac{1-\wP_k}{2}(A_{k-1}\mu + \mu_0)\breg{\X}{\u}{\x_{k-1}} -  \tfrac{1-\wP_k}{2}\underbrace{(A_{k-1}\mu + \mu_0)\breg{\X}{\u}{\x_{k-1}}}_{\TP_{k-1} \text{ from~\eqref{eq:telescoping_terms}}} \\
        &\quad + \tfrac{\wP_k}{2}(A_{k-1}\mu + \mu_0) \sum_{I=1}^N \gamma_I \breg{\X}{\u}{\hxb_{k-1, I}} - \tfrac{\wP_k}{2}\underbrace{(A_{k-1}\mu + \mu_0) \sum_{I=1}^N \gamma_I \breg{\X}{\u}{\hxb_{k-1, I}}}_{\sum_I \gamma_I \hTP_{k-1, I} \text{ from~\eqref{eq:stoch:telescoping_terms}}}\\
        &\quad + a_k \ip{\y_k, \fb(\x_k)} - a_k\psi(\y_k).
    \end{align*}
    When applying \Cref{lem:3pt} with $A = A_{k-1}$, $a = a_k$, $\gamma = \mu$, and $\gamma_0 = \mu_0$ and using the definitions of $\CP_k$ from~\eqref{eq:cancellation_terms} and $\hCP_{k, I}$ from~\eqref{eq:stoch:cancellation_terms}, we achieve the inequality
    \begin{align*}
        a_k\Lcal(\u, \y_k) &\geq a_k \phi(\x_k) + a_k \ip{\grad \fb(\x_k)^\top \y_k - \g_{k-1}, \u - \x_k} \\
        &\quad + \tfrac{1-\wP_k}{2}\CP_{k} -  \tfrac{1-\wP_k}{2}\TP_{k-1} \\
        &\quad + \tfrac{\wP_k}{2}\textstyle\sum_{I=1}^N \gamma_I\hCP_{k, I} - \tfrac{\wP_k}{2}\sum_I \gamma_I \hTP_{k-1, I}\\
        &\quad + a_k \ip{\y_k, \fb(\x_k)} - a_k\psi(\y_k)\\
        &\quad + \tfrac{1}{2}\TP_{k} + \tfrac{a_k \mu}{2}\breg{\X}{\u}{\x_k}.
    \end{align*}
    Decompose $\tfrac{1}{2}\TP_{k} = \tfrac{1 - \wP_k}{2}\TP_{k} + \tfrac{\wP_k}{2}\TP_{k}$. Finally, use $ -\tfrac{1-\wP_{k}}{2}\TP_{k-1} \geq -\tfrac{1-\wP_{k-1}}{2}\TP_{k-1}$, the substitution $\Lcal(\x_k, \y_k) = \ip{\y_k, \fb(\x_k)} - \psi(\y_k) + \phi(\x_k)$, and the definition of $\IP_k$ from~\eqref{eq:inner_prod_terms} to complete the proof.
\end{proof}
The upper bound is proved in a nearly identical fashion, except the step that employs convexity in~\eqref{eq:lower:cvx} is not used; the proof is omitted for brevity.
\begin{lemma}\label[lemma]{lem:upper}
    For any $k \geq 1$, let $\bfb_{k-1/2} \in \R^n$ and $\wD_k, \wD_{k-1} \in (0, 1)$ such that $\wD_{k-1} \leq \wD_k$. For $k \geq 1$, if $\y_k$ is defined via the update~\eqref{eq:dual_update}, then it holds that
    \begin{align}
        a_k\Lcal(\x_k, \v)
        &\leq a_k \mc{L}(\x_k, \y_k) + \ID_k \notag\\
        &\quad + \p{\tfrac{1-\wD_{k-1}}{2}\TD_{k-1} - \tfrac{1-\wD_k}{2}\TD_k} + \tfrac{\wD_k}{2}\p{\hTD_{k-1} - \TD_k} \notag\\
        &\quad - \tfrac{1-\wD_k}{2}\CD_k  - \tfrac{\wD_k}{2}\hCD_k - \tfrac{a_k\nu}{2} \breg{\Y}{\v}{\y_k}. \label{eq:stoch:cancellation_dual}
    \end{align}
\end{lemma}
Combining the derived upper and lower bounds and canceling matching terms, we claim the following result. This bound will be the starting point for every subsequent result in the paper, so we reference it heavily in the sequel.
\begin{mdframed}[userdefinedwidth=0.9\linewidth, align=center, linewidth=0.3mm]
\begin{claim}\label{claim:gap}
    Using the notation from~\eqref{eq:telescoping_terms},~\eqref{eq:cancellation_terms},~\eqref{eq:stoch:telescoping_terms},~\eqref{eq:stoch:cancellation_terms}, and~\eqref{eq:inner_prod_terms}, we have
    \begin{align}
        &\sum_{k=1}^t a_k\gap^{\u, \v}(\x_k, \y_k) \leq \p{\tfrac{1-\wP_0}{2}\TP_{0} - \tfrac{1-\wP_t}{2}\TP_{t}} + \p{\tfrac{1-\wD_0}{2}\TD_{0} - \tfrac{1-\wD_t}{2}\TD_{t}} \notag \\
        &\quad + \sum_{k=1}^{t} \sbr{\ID_k - \IP_k} - \sbr{\tfrac{1-\wP_k}{2}\CP_k + \tfrac{1-\wD_k}{2}\CD_k + \tfrac{\wP_k}{2}\textstyle\sum_I \gamma_I \hCP_{k, I} + \tfrac{\wD_k}{2}\hCD_k} \label{eq:stoch:gap2_cancellation} \\
        &\quad + \tfrac{1}{2}\sum_{k=1}^t \sbr{\wP_k\p{\textstyle\sum_{I=1}^N \gamma_I \hTP_{k-1, I} - \TP_{k}}- a_k\mu\breg{\X}{\u}{\x_k}} \label{eq:stoch:gap3_primal}\\
        &\quad + \tfrac{1}{2}\sum_{k=1}^t \sbr{\wD_k\p{\hTD_{k-1} - \TD_{k}} - a_k\nu\breg{\Y}{\v}{\y_k}}  \label{eq:stoch:gap3_dual}
    \end{align}
\end{claim}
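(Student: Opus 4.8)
The plan is to add the bounds of \Cref{lem:lower} and \Cref{lem:upper} termwise and then sum over $k$, after which a single telescoping collapse yields the claimed inequality. I start from the definition of the gap in \eqref{eq:gap}, which gives, for each $k \geq 1$,
\[
    a_k\gap^{\u,\v}(\x_k,\y_k) = a_k\Lcal(\x_k,\v) - a_k\Lcal(\u,\y_k).
\]
Upper bounding the first term by \Cref{lem:upper} (its expectation is vacuous in the present deterministic template and the bound carries through verbatim in the later stochastic instantiations) and lower bounding the second by \Cref{lem:lower} -- so that $-a_k\Lcal(\u,\y_k)$ is upper bounded by the negation of the right-hand side of \Cref{lem:lower} -- the two copies of $a_k\Lcal(\x_k,\y_k)$ appearing in \eqref{eq:stoch:inner_prod_primal} and \eqref{eq:stoch:inner_prod_dual} cancel exactly. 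What remains for fixed $k$ is: the residual inner products $\ID_k - \IP_k$; the signed distance-to-opt increments $\bigl(\tfrac{1-\wP_{k-1}}{2}\TP_{k-1} - \tfrac{1-\wP_k}{2}\TP_k\bigr) + \bigl(\tfrac{1-\wD_{k-1}}{2}\TD_{k-1} - \tfrac{1-\wD_k}{2}\TD_k\bigr)$; the comparison-point increments $\tfrac{\wP_k}{2}\bigl(\sum_{I=1}^N \gamma_I \hTP_{k-1,I} - \TP_k\bigr) + \tfrac{\wD_k}{2}\bigl(\hTD_{k-1} - \TD_k\bigr)$; the four nonpositive cancellation terms $-\tfrac{1-\wP_k}{2}\CP_k - \tfrac{1-\wD_k}{2}\CD_k - \tfrac{\wP_k}{2}\sum_{I=1}^N \gamma_I\hCP_{k,I} - \tfrac{\wD_k}{2}\hCD_k$; and the extra strong-convexity terms $-\tfrac{a_k\mu}{2}\breg{\X}{\u}{\x_k} - \tfrac{a_k\nu}{2}\breg{\Y}{\v}{\y_k}$.

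Next I sum this inequality over $k = 1,\dots,t$. The only genuinely telescoping pieces are the $(1-\wP_\bullet)\TP_\bullet$ and $(1-\wD_\bullet)\TD_\bullet$ increments: $\sum_{k=1}^t \bigl(\tfrac{1-\wP_{k-1}}{2}\TP_{k-1} - \tfrac{1-\wP_k}{2}\TP_k\bigr) = \tfrac{1-\wP_0}{2}\TP_0 - \tfrac{1-\wP_t}{2}\TP_t$, and likewise for the dual, where $\wP_0,\wD_0$ together with the initializations $\hxb_{0,I} = \x_0$ and $\hyb_0 = \y_0$ from \Cref{algo:drago_v2} serve as the boundary conventions; this produces the first line of the claim. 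The remaining per-iteration terms are carried along unchanged: $\ID_k - \IP_k$ paired with the four cancellation terms becomes line \eqref{eq:stoch:gap2_cancellation}; the primal comparison-point increments paired with $-a_k\mu\,\breg{\X}{\u}{\x_k}$, after pulling out the common factor $\tfrac12$, become line \eqref{eq:stoch:gap3_primal}; and the dual analog becomes line \eqref{eq:stoch:gap3_dual}. Reading off signs and coefficients against the notation in \eqref{eq:telescoping_terms}--\eqref{eq:inner_prod_terms} gives exactly the stated inequality.

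There is no real obstacle here -- the claim is a bookkeeping consequence of \Cref{lem:lower} and \Cref{lem:upper} and uses no further inequality -- but two points deserve attention. First, one must confirm that the $a_k\Lcal(\x_k,\y_k)$ terms in \eqref{eq:stoch:inner_prod_primal} and \eqref{eq:stoch:inner_prod_dual} cancel rather than add, which holds precisely because \Cref{lem:lower} enters with a minus sign. Second, one must recognize that the comparison-point quantities $\hTP_{k-1,I}$ and $\hTD_{k-1}$ do \emph{not} telescope against $\TP_k, \TD_k$, since the anchors $\hxb_{k-1,I}, \hyb_{k-1}$ are updated by $\sub_3$ and need not coincide with the iterates; hence these are retained as the residuals in \eqref{eq:stoch:gap3_primal}--\eqref{eq:stoch:gap3_dual}, to be controlled later once a concrete anchor-update rule is fixed.
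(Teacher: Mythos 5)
Your proposal is correct and is essentially the paper's own argument: the claim is obtained exactly by subtracting the bound of \Cref{lem:lower} from that of \Cref{lem:upper}, cancelling the two copies of $a_k\Lcal(\x_k,\y_k)$, summing over $k$, and telescoping the $(1-\wP_\bullet)\TP_\bullet$ and $(1-\wD_\bullet)\TD_\bullet$ increments while carrying the remaining terms along. Your two closing remarks (the sign of the cancellation and the fact that the comparison-point terms do not telescope) are exactly the right points to flag.
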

\end{mdframed}

The remaining work will be to bound the inner product terms $\IP_k$ and $\ID_k$ by quantities that either telescope or can be cancelled by the remaining terms within~\eqref{eq:stoch:gap2_cancellation}. Then, if $\wP_k > 0$ and $\wD_k > 0$ for any $k$, we will also control~\eqref{eq:stoch:gap3_primal} and~\eqref{eq:stoch:gap3_dual}. For these two lines, we will bound the entire sum over $k$ by a term that does not grow with $t$. 
For all three lines, this will rely both on the specific form of $\g_{k-1}$ and $\bfb_{k-1/2}$ (which we call the primal and dual gradient estimates) and the growth conditions placed on the sequence  $(a_k)_{k \geq 1}$. While these steps may also be modified slightly in the separable setting (see \Cref{def:separable}), the format of the analysis remains the same. 
The growth of the $(a_k)_{k \geq 1}$ sequence (e.g., constant, polynomial, exponential) is determined if the user knows whether the objective is convex or strongly convex and concave or strongly concave in the primal and dual variables, respectively. Regarding hyperparameters, we consider some variants that do not use the historical regularization by setting $\wP_k$ and $\wD_k$ to zero in~\eqref{eq:prim_update} and~\eqref{eq:dual_update}, meaning that $(\gamma_I)_{I=1}^N$ is no longer a hyperparameter that needs to be set. Thus, the number of hyperparameters decreases considerably for each of the cases in \Cref{sec:nonsep:stochastic} and \Cref{sec:sep}. 

The comparison points (when used) are reflective of SAGA-style variance reduction methods \citep{defazio2014saga, palaniappan2016stochastic}. In general, they are snapshots of previous iterates and may be used not only to define proximity terms but in the definitions of $\g_{k-1}$ and $\bfb_{k-1/2}$ as well. By convention, we take any time-dependent element at a negative index to be equal to its initial value (indexed by zero). The number of points stored for the primal updates is equal to the number of blocks $N$, which may be much smaller than $n$ (avoiding the $O(nd)$ complexity of SAGA). There is only a single additional comparison point in the dual update, incurring a storage cost of $O(n)$. 
Finally, although the updates include the strong convexity parameters $\mu$ and $\nu$, this choice is for readability when proceeding through the analysis. Practically, the growth of the sequence $(A_k)_{k \geq 1}$ will be derived in terms of $k$ with an unknown constant scaling. This constant is a hyperparameter to be searched by the algorithm. For example, when $\mu > 0$ and $\nu > 0$, we have that $a_{k+1} = \alpha A_{k} = \alpha \sum_{i=1}^k a_i$ for $k \geq 1$, where $\alpha > 0$ will be a tunable hyperparameter whose optimal value depends on $\mu$ and $\nu$. We conclude this section with a deterministic full vector update method to provide intuition and work with \Cref{claim:gap}, whereas more advanced algorithms are presented in \Cref{sec:nonsep:stochastic} and \Cref{sec:sep}.

\subsection{Warm-up: deterministic algorithm}\label{sec:nonsep:full}
Here, we allow the algorithm to access all first-order oracles $\br{(f_i, \grad f_i)}_{i=1}^n$ in each iteration, for a total per-iteration cost of $O(nd)$. \edit{We first state this deterministic full-batch algorithm in its entirety, which boils down to replacing the subroutines from \Cref{algo:drago_v2} with formulas for the gradient estimates $\g_{k-1}$ and $\bfb_{k-1/2}$, determining the sequence $\br{(a_k, \wP_k, \wD_k)}_{k \geq 1}$, and updating the comparison points (if any). We will first set $\wP_k = \wD_k = 0$ for simplicity, which eschews the need for comparison points. We then motivate our choices of $(\g_{k-1}, \bfb_{k-1/2})$ using the analysis template from \Cref{sec:template:first}. We leave the discussion of $(a_k)_{k \geq 1}$ to accompany the main results and their proofs. Consider \Cref{algo:full_vector}.}

\begin{algorithm}[t]
\setstretch{1.2}
   \caption{Full Vector Update Version of \Cref{algo:drago_v2}}
   \label{algo:full_vector}
    \begin{algorithmic}[1]
   \State {\bfseries Input:} Initial point $(\x_0, \y_0)$, averaging sequence $(a_k)_{k=0}^t$.
   \For {$k=1$ {\bfseries to} $t$}
       \State $\sub_1$: Compute $\g_{k-1}$ via the formula
       \begin{align}
            \g_{k-1} = \grad \fb(\x_{k-1})^\top \y_{k-1} + \frac{a_{k-1}}{a_k}\p{\grad \fb(\x_{k-1})^\top \y_{k-1} - \grad \fb(\x_{k-2})^\top \y_{k-2}}. \label{eq:full:grad}
        \end{align}
       \State Perform the primal update (with $\wP_k$ from~\eqref{eq:prim_update} set to zero):
       \begin{align}
           \x_k &= \argmin_{\x \in \X} \ a_k \ip{\g_{k-1}, \x} + a_k\phi(\x) + \tfrac{A_{k-1}\mu + \mu_0}{2}\breg{\X}{\x}{\x_{k-1}}.\notag
       \end{align}
       \State $\sub_2$: Compute $\bfb_{k-1/2} = \fb(\x_k)$.
       \State Perform the dual update (with $\wD_k$ from~\eqref{eq:dual_update} set to zero):
       \begin{align}
           \y_k &= \argmax_{\y \in \Y} \ a_k \ip{\y, \bfb_{k-1/2}} - a_k\psi(\y) - \tfrac{A_{k-1}\nu + \nu_{0}}{2}  \breg{\Y}{\y}{\y_{k-1}}. \notag
       \end{align}
       \State $\sub_3$: None for this algorithm.
   \EndFor
   \Return $A_t^{-1} \sum_{k=1}^t a_k(\x_k, \y_k)$.
\end{algorithmic}

\end{algorithm}

\edit{Recall that our goal was to employ \Cref{claim:gap} as the main tool to control the primal-dual gap. With $\wP_k = \wD_k = 0$, we are left to control the terms $\IP_k$ and $\ID_k$ from~\eqref{eq:stoch:gap2_cancellation}. Accordingly, by substituting~\eqref{eq:full:grad} into $\IP_k$, we have that}
\begin{align}
    \IP_k &= a_k \ip{\grad \fb(\x_k)^\top \y_k - \grad \fb(\x_{k-1})^\top \y_{k-1}, \u - \x_k} \notag\\
    &\quad- a_{k-1} \ip{\grad \fb(\x_{k-1})^\top \y_{k-1} - \grad \fb(\x_{k-2})^\top \y_{k-2}, \u - \x_{k}}\notag \\
    &= a_k \ip{\grad \fb(\x_k)^\top \y_k - \grad \fb(\x_{k-1})^\top \y_{k-1}, \u - \x_k} \notag\\
    &\quad- a_{k-1} \ip{\grad \fb(\x_{k-1})^\top \y_{k-1} - \grad \fb(\x_{k-2})^\top \y_{k-2}, \u - \x_{k-1}} - \EP_k\label{eq:three_term_decomp}
\end{align}
where we define the error term
\begin{align}
    \EP_k = a_{k-1} \ip{\grad \fb(\x_{k-1})^\top \y_{k-1} - \grad \fb(\x_{k-2})^\top \y_{k-2}, \x_{k-1} - \x_{k}}. \label{eq:full:error}
\end{align}
\edit{Furthermore, because $\x_k$ can be used in the definition of $\bfb_{k-1/2}$ and $O(nd)$ operations are permitted in this algorithm, we may simply set $\bfb_{k-1/2} = \fb(\x_k)$ to make $\ID_k$ vanish. Then, the right-hand side of \Cref{claim:gap} reads as
}
\begin{align}
    &\sum_{k=1}^t a_k\gap^{\u, \v}(\x_k, \y_k) \leq \frac{1}{2}\TP_{0} - \frac{1}{2}\TP_{t} + \frac{1}{2}\TD_{0} - \frac{1}{2}\TD_{t}\notag \\
    &\quad + \sum_{k=1}^{t} \EP_k - a_t \ip{\grad \fb(\x_t)^\top \y_t - \grad \fb(\x_{t-1})^\top \y_{t-1}, \u - \x_t} - \tfrac{1}{2}\sum_{k=1}^{t} \p{\CP_k + \CD_k}. \label{eq:full:gap}
\end{align}
Evidently, the goal is for the sum of terms in~\eqref{eq:full:gap} to be a quantity that does not grow with $t$. A key step in this proof (and the proofs of upcoming results) will be to bound above $\EP_k$ such that the resulting terms can be canceled by $\CP_k$ and $\CD_k$ from~\eqref{eq:cancellation_terms}. \edit{Because $\ID_k = 0$ for our choice of $\bfb_{k-1/2}$, there is no $\ED_k$ term that reflects $\EP_k$ on the dual-side; however, this may appear in other algorithms in which we do not use the choice $\bfb_{k-1/2} = \fb(\x_k)$. Accordingly, we consider the terms $\EP_k$ and $\ED_k$ as recurring characters in the analysis of every algorithm, and wrap their definitions (and the definitions of subroutines from \Cref{algo:drago_v2}) as an identity card for each algorithm in the form below.
}
\renewcommand{\arraystretch}{1.4}
\begin{mdframed}[userdefinedwidth=\linewidth, align=center, linewidth=0.3mm]
\centering
\begin{tabular}{l}
     \textbf{Identity Card 1:} Full vector update method\\
     \toprule
     $\sub_1$: $\g_{k-1} \hspace{-2pt} = \hspace{-2pt} \grad \fb(\x_{k-1})^\top \y_{k-1} \hspace{-2pt} + \hspace{-2pt} \frac{a_{k-1}}{a_k}\p{\grad \fb(\x_{k-1})^\top \y_{k-1} \hspace{-2pt} - \hspace{-2pt} \grad \fb(\x_{k-2})^\top \y_{k-2}}$\\
     $\sub_2$: $\bfb_{k-1/2} = \fb(\x_k)$\\
     $\sub_3$: None\\
     \midrule
     \emph{Primal error:} $\EP_k = a_{k-1} \ip{\grad \fb(\x_{k-1})^\top \y_{k-1} - \grad \fb(\x_{k-2})^\top \y_{k-2}, \x_{k-1} - \x_{k}}$\\
     \emph{Dual error:} $\ED_k = 0$\\
\end{tabular}
\end{mdframed}
\edit{We first state the main result for the complexity of \Cref{algo:full_vector}, which differs based on the whether the objective is strongly convex in the primal, strongly concave in the dual, or both. These conditions also determine the growth rate of the $(a_k)_{k \geq 1}$ sequence required to realize these complexities.}
\begin{thm}\label{thm:nonsep:full}
    Under \Cref{asm:smoothness} and \Cref{asm:str_cvx}, consider any $\u \in \X$, $\v \in \Y$, and precision $\epsilon > 0$. Define the initial distance term
    \begin{align}
        D_0 = \sqrt{\frac{\mu_0}{\nu_0}}\breg{\X}{\u}{\x_0} + \sqrt{\frac{\nu_0}{\mu_0}}\breg{\Y}{\v}{\y_0}. \label{eq:initial_distance}
    \end{align}
    There exists a choice of the sequence $(a_k)_{k=1}^t$ such that \Cref{algo:full_vector} produces an output point $(\tx_t, \ty_t) \in \X \times \Y$ satisfying $\gap^{\u, \v}(\tx_t, \ty_t) \leq \epsilon$ for $t$ that depends on $\epsilon$ according to the following iteration complexities (where $\alpha = \min\br{{\sqrt{\mu\nu}}/\p{\sqrt{2} G}, {\mu}/\p{2 L}}$). 
    \begin{center}
    \begin{adjustbox}{max width=\linewidth}
    \begin{tabular}{ccc}
    \toprule
        {\bf Case} & \edit{{\bf $a_k$ (big-$O$)}} & {\bf Iteration Complexity (big-$O$)}\\
        \midrule
        $\mu > 0$, $\nu > 0$
        &
        \edit{$(1+\alpha)^k$}
        &
        $\p{\frac{L}{\mu} + \frac{G}{\sqrt{\mu\nu}}}\ln\p{\tfrac{\p{L\sqrt{\nu_0/\mu_0}  + G}D_0}{\epsilon}}$
        \\
        $\mu > 0$, $\nu = 0$
        &
        \edit{$k$}
        &
        $\frac{L}{\mu}\ln\p{\tfrac{\p{L\sqrt{\nu_0/\mu_0}  + G}D_0}{\epsilon}} + G\sqrt{\frac{ \sqrt{\mu_0/\nu_0}D_0}{\mu \epsilon}}$
        \\
        $\mu = 0$, $\nu > 0$
        &
        \edit{$1$}
        &
        $\frac{L\sqrt{\nu_0/\mu_0}D_0}{\epsilon} +  G\sqrt{\frac{ \sqrt{\nu_0/\mu_0}D_0}{\nu \epsilon}}$
        \\
        $\mu = 0$, $\nu = 0$
        &
        \edit{$1$}
        &
        $\frac{\p{L\sqrt{\nu_0/\mu_0}  + G}D_0}{\epsilon}$\\
    \bottomrule
    \end{tabular}
    \end{adjustbox}
    \end{center}
\end{thm}
\edit{
\Cref{thm:nonsep:full} describes the required number of iterations of \Cref{algo:full_vector}. We assume that the cost of querying the first-order oracle $\x \mapsto (f_j(\x), \grad f_j(\x))$ is $O(d)$ for any $j = 1, \ldots, n$, and that the optimization problems~\eqref{eq:prim_update} defining the primal update and~\eqref{eq:dual_update} defining the dual update can be solved at $\tilde{O}(d)$ and $\tilde{O}(n)$ cost, respectively. Each iteration requires querying all first-order oracles and furthermore computes matrix-vector products using $n \times d$ matrices, thus the overall arithmetic complexity of each iteration of \Cref{algo:drago_v2} with Identity Card~1 is $\tilde{O}(nd)$.

Returning to the proof template, the upcoming \Cref{prop:nonsep:full} will introduce conditions on $(a_k)_{k \geq 1}$ such that the non-negative terms from~\eqref{eq:full:gap} can be cancelled. The proof of \Cref{thm:nonsep:full}, provided at the end of this section, then shows these conditions can be achieved by providing particular choices of $(a_k)_{k \geq 1}$ for each case.
}
\begin{prop}\label{prop:nonsep:full}
    Let $(\x_0, \y_0) \in \ri(\dom(\phi)) \times \ri(\dom(\psi))$ and $\br{(\x_k, \y_k)}_{k\geq 1}$ be generated by the updates~\eqref{eq:prim_update} and~\eqref{eq:dual_update}, with $\g_{k-1}$ given by~\eqref{eq:full:grad} and $\bfb_{k-1/2} = \fb(\x_k)$. 
    Select $(a_k)_{k \geq 1}$ to satisfy
    \begin{align}
        a_{k} &\leq \frac{1}{4}\min\br{\frac{\sqrt{(A_{k}\mu + \mu_0)(A_{k-1}\nu + \nu_0)}}{G}, \frac{\sqrt{(A_{k}\mu + \mu_0)(A_{k-1}\mu + \mu_0)}}{\sqrt{2}L}},
        \label{eq:full:rate}
    \end{align}
    Recalling the notation of~\eqref{eq:telescoping_terms}, we have that for any $\u \in \X$ and $\v \in \Y$, 
    \begin{align}
        \sum_{k=1}^t a_k \gap^{\u, \v}(\x_k, \y_k) + \tfrac{1}{4}\TP_t + \tfrac{1}{2}\TD_t &\leq \tfrac{1}{2}\p{\TP_0 + \TD_0}.
    \end{align}
\end{prop}
\begin{proof}
    We proceed from steps leading up to the gap bound~\eqref{eq:full:gap}. Consider $k \geq 2$ (as $\EP_k = 0$ for $k \leq 1$).
    Apply Young's inequality with parameter $(A_{k-1}\mu + \mu_0)/4$ and the strong convexity of Bregman divergences to write
    \begin{align}
        \EP_k &\leq \frac{1}{4} \CP_k + \frac{2a_{k-1}^2}{A_{k-1}\mu + \mu_0} \norm{\grad \fb(\x_{k-1})^\top \y_{k-1} - \fb(\x_{k-2})^\top \y_{k-2}}_{\X^*}^2. \notag
    \end{align}
    To bound the second term above, we first decompose it via Young's inequality. Write
    \begin{align}
        &\norm{\grad \fb(\x_{k-1})^\top \y_{k-1} - \grad \fb(\x_{k-2})^\top \y_{k-2}}_{\X^*}^2\notag\\
        &\quad\leq 2\norm{(\grad \fb(\x_{k-1}) - \grad \fb(\x_{k-2}))^\top \y_{k-1}}_{\X^*}^2\notag\\
        &\quad\quad + 2\norm{\grad \fb(\x_{k-2})^\top (\y_{k-1} - \y_{k-2})}_{\X^*}^2\notag\\
        &\quad\leq 4L^2 \breg{\X}{\x_{k-1}}{\x_{k-2}} + 4G^2 \breg{\Y}{\y_{k-1}}{\y_{k-2}}, \label{eq:lg_youngs_ineq}
    \end{align}
    where the last inequality follows by~\eqref{eq:agg_lip} and~\eqref{eq:agg_smth} from \Cref{asm:smoothness}.
    Recall that $\CP_k = (A_{k-1}\mu + \mu_0) \breg{\X}{\x_{k}}{\x_{k-1}}$ and $\CD_k = (A_{k-1}\nu + \nu_0) \breg{\Y}{\y_{k}}{\y_{k-1}}$ (see~\eqref{eq:cancellation_terms}).
    Combining the steps above for $k \geq 2$ and the condition~\eqref{eq:full:rate} yields
    \begin{align}
        \EP_k &\leq \frac{1}{4}\CP_k + \frac{8a_{k-1}^2}{(A_{k-1}\mu + \mu_0)} \sbr{\frac{L^2\CP_{k-1}}{A_{k-2}\mu + \mu_0} + \frac{G^2\CD_{k-1}}{A_{k-2}\nu + \nu_0}} \label{eq:full:lip}\\
        &\leq \frac{1}{4}\CP_k + \frac{1}{4}\CP_{k-1} + \frac{1}{2}\CD_{k-1}. \notag
    \end{align}
    Note that for the case of $k = 2$, our choice of $a_1$ satisfies~\eqref{eq:full:rate} as well.
    Summing up the current gap bound over $k = 1, \ldots, t$ and dropping non-positive terms yields
    \begin{align}
        \sum_{k=1}^t a_k\gap^{\u, \v}(\x_k, \y_k) &\leq \tfrac{1}{2}\TP_0 - \tfrac{1}{2}\TP_t + \tfrac{1}{2}\TD_0 - \tfrac{1}{2}\TD_t - \p{\tfrac{1}{4} \CP_t + \tfrac{1}{2}\CD_t} \notag\\
        &\quad - a_t \ip{\grad \fb(\x_t)^\top \y_t - \grad \fb(\x_{t-1})^\top \y_{t-1}, \u - \x_t}.\label{eq:full:gap1}
    \end{align}
    For the remaining inner product term, we apply Young's inequality with parameter $(A_{t}\mu + \mu_0)/2$ and apply a similar argument as for~\eqref{eq:full:lip}:
    \begin{align*}
        &a_t \ip{\grad \fb(\x_t)^\top \y_t - \grad \fb(\x_{t-1})^\top \y_{t-1}, \u - \x_t} \\
        &\quad= -a_t \ip{\grad \fb(\x_t)^\top \y_t - \grad \fb(\x_{t-1})^\top \y_{t-1}, \u - \x_t} \\
        &\quad\leq \frac{2a_t^2}{A_{t}\mu + \mu_0}\norm{\grad \fb(\x_t)^\top \y_t - \grad \fb(\x_{t-1})^\top \y_{t-1}}_{\X^*}^2 + \frac{1}{4}\TP_t\\
        &\quad\leq \frac{1}{4}\CP_t + \frac{1}{2}\CD_t  + \frac{1}{4}\TP_t,
    \end{align*}
    which will each be cancelled by terms in~\eqref{eq:full:gap1}, leading to the claimed bound.
\end{proof}
To convert the convergence guarantee in \Cref{prop:nonsep:full} into a complexity result, we consider the four possible cases for whether $\mu = 0$ and/or $\nu = 0$, which now allows us to prove \Cref{thm:nonsep:full}.
\begin{proof}
    We first determine the growth of the sequence $A_t$, so that $A_t^{-1}$ gives the convergence rate in terms of the number of iterations. The growth rate can be derived by providing a sequence $(a_k)_{k \geq 0}$ such that~\eqref{eq:full:rate} is satisfied. 
    For the dependence of the required number of iterations on the suboptimality parameter $\epsilon$, we write $G_0 A_t^{-1} \leq \epsilon$ and solve for $t$, noting that $G_0 = \mu_0\breg{\X}{\u}{\x_0} + \nu_0 \breg{\Y}{\v}{\y_0}$, \edit{and that $\gap^{\u, \v}(\tx_t, \ty_t) \leq G_0 A_t^{-1}$ by convexity}. In all cases, set $a_1 = (1/4)\min\br{{\sqrt{\mu_0\nu_0}}/\p{G}, {\mu_0}/\p{\sqrt{2}L}}$ so that the condition~\eqref{eq:full:rate} is satisfied, and 
    \begin{align*}
        \frac{G_0}{a_1} = 4\max\br{\sqrt{2}L\sqrt{\frac{\nu_0}{\mu_0}}, G}D_0 = O\p{\p{L\sqrt{\tfrac{\nu_0}{\mu_0}} + G}D_0}.
    \end{align*}
    Although the $(a_k)_{k\geq 1}$ values are unitless, the relative quantity can help determine the optimal values for $\mu_0$ and $\nu_0$.
    
    \paragraph{Case 1: $\mu > 0$, $\nu > 0$.} Let $\alpha = \frac{1}{4}\min\br{\frac{\sqrt{\mu\nu}}{G}, \frac{\mu}{\sqrt{2} L}}$. For $k \geq 2$, write $A_k - A_{k-1} = a_k = \alpha \sqrt{A_k A_{k-1}} \geq \alpha A_{k-1}$, which implies that $A_t \geq (1 + \alpha)^t a_1$. Then,
    \begin{align*}
        \frac{G_0}{A_t} \leq \frac{G_0}{a_1(1+\alpha)^t} \lesssim (1+\alpha)^{-t}\p{L\sqrt{\tfrac{\nu_0}{\mu_0}} + G}D_0 \overset{\text{want}}{\leq} \epsilon,
    \end{align*}
    which is satisfied for $t$ at the given big-$O$ order.
    \paragraph{Case 2: $\mu > 0$, $\nu = 0$.} We have that $a_k = \min\br{\frac{\mu \nu_0}{32G^2}k, \frac{\mu}{8 L} A_{k-1}}$ for $k \geq 2$ satisfies the rate condition. Then, there exists a $k^\star \geq 0$ such that $A_t \geq (1+\frac{\mu}{8L})^{t}a_1$ for all $t < k^\star$ and $A_t \geq \frac{\mu \nu_0}{32G^2} \sum_{k=k^\star+1}^t k + (1+\frac{\mu}{8L})^{k^\star}a_1$ for $t \geq k^\star$. To compute the complexity, we consider when either term is dominant. For $(1+\frac{\mu}{8L})^{k^\star}a_1$, we apply the same argument as Case 1. Otherwise, we have that
    \begin{align*}
       \frac{G_0}{A_t} \lesssim \frac{G^2}{\mu \nu_0 t^2}\p{\mu_0\breg{\X}{\u}{\x_0} + \nu_0 \breg{\Y}{\v}{\y_0}} =  \frac{G^2}{\mu t^2}\sqrt{\frac{\mu_0}{\nu_0}}D_0 \overset{\text{want}}{\leq} \epsilon.
    \end{align*}
    \paragraph{Case 3: $\mu = 0$, $\nu > 0$.} For $k \geq 2$, we have that $a_k = \min\br{\frac{\mu_0 \nu}{32G^2}k, \frac{\mu_0}{8L}}$ satisfies the rate condition by direct computation. Thus, for $k \leq k^\star = \frac{8G^2}{\nu L}$, we have that $A_t \geq \frac{\mu_0 \nu t(t+1)}{64G^2}$ (for which we argue similarly to Case 2 above), and otherwise, $A_t \geq \frac{\mu_0 (t - k^\star)}{8L} + \frac{\mu_0 \nu k^\star(k^\star+1)}{64G^2}$ (for which we argue similarly to Case 4 below).
    \paragraph{Case 4: $\mu = 0$, $\nu = 0$.} Here, $a_k$ is equal to a constant, so $A_t = a_1t$. Then, arguing similarly to Case 1, 
    \begin{align*}
        \frac{G_0}{A_t} \leq \frac{G_0}{a_1t} \lesssim t^{-1} \p{L\sqrt{\tfrac{\nu_0}{\mu_0}} + G} D_0 \overset{\text{want}}{\leq} \epsilon,
    \end{align*}
    which is satisfied for $t$ at the given big-$O$ order, completing the proof.
\end{proof}
\edit{The next two sections introduce randomized algorithms for when problem~\eqref{eq:semilinear} is non-separable (\Cref{sec:nonsep:stochastic}) and separable (\Cref{sec:sep}), respectively.}

\section{Stochastic Algorithms for General Objectives}\label{sec:nonsep:stochastic}

In the case of a randomized algorithm, we allow the vectors $\g_{k-1}$ and $\bfb_{k-1/2}$ to be defined based on a randomly chosen subset of the indices in $[n]$. This amounts to accessing first-order information $(f_j, \grad f_j)$ for only some $j \in [n]$. The expressions for $\g_{k-1}$ and $\bfb_{k-1/2}$ will depend on historical values of the primal and dual iterations, in the spirit of variance reduction or random extrapolation for convex minimization (see, e.g., \citet{gower2020variance}).
We first describe precisely which comparison points are stored by the algorithm and then specify how they are used to compute $\g_{k-1}$ and $\bfb_{k-1/2}$. 
We store $N$ previous primal \emph{iterates} $\hxb_{k, 1}, \ldots, \hxb_{k, N} \in \X$ and a collection of past dual \emph{coordinate blocks} $\hyb_k = \p{\hyb_{k, 1}, \ldots, \hyb_{k, N}} \in \Y$ associated to each block index.
We define a primal gradient table $\hJb_{k} = (\hgb_{k, 1}, \ldots, \hgb_{k, n})\in \R^{n\times d}$ and dual gradient table $\hfb_{k} = (\hf_{k, 1}, \ldots, \hf_{k, n}) \in \R^n$ constructed via
\begin{align}
    (\hat{f}_{k,i}, \hgb_{k, i}) = (f_i(\hxb_{k, I}), \grad f_i(\hxb_{k, I})) \text{ for all } i \in B_I.\label{eq:first_order_table}
\end{align}
In other words, the tables contain the first-order information of each $f_i$ in block $B_I$ at $\hxb_{k, I}$. Note that we do not necessarily need to store the $O(nd)$-sized gradient table, and need only store the $O(Nd)$-sized table of comparison points, which can be much smaller.
We update these tables randomly at the end of each iteration by independently sampling block indices $R_k$ and $S_k$ (possibly non-uniformly) and setting each block to
\begin{align}
    \hxb_{k, I} = \begin{cases}
        \x_{k} & \text{ if } I = R_k\\
        \hxb_{k-1, I} & \text{ otherwise}
    \end{cases}
    \text{ and } 
    \hyb_{k, I} = \begin{cases}
        \y_{k, I} & \text{ if } I = S_k\\
        \hyb_{k-1, I} & \text{ otherwise}
    \end{cases}.
    \label{eq:table_elements}
\end{align}
As mentioned in \Cref{sec:template},
we define $(\hxb_{k, I}, \hyb_{k, I}) = (\x_0, \y_{0, I})$ for any block $B_I$ and iteration $k < 0$. The probability weights that govern the randomness in $R_k$ and $S_k$ are denoted as $\rb = (r_1, \ldots, r_N)$, and  $\boldsymbol{s} = (s_1, \ldots, s_N)$, respectively.

Next, for computing the primal and dual gradient estimates, we sample two more block indices $P_k$ and $Q_k$ with associated probability mass vectors $\pb = (p_1, \ldots, p_N)$ and $\qb = (q_1, \ldots, q_N)$, respectively.
Letting $\e_j$ denote the $j$-th standard basis vector in $\R^n$, we construct
\begin{align}
    \g_{k-1} &= \hJb_{k-1}^\top \hyb_{k-1} + \frac{a_{k-1}}{p_{P_k} a_k}\sum_{i \in B_{P_k}}\p{y_{k-1, i}\grad f_{i}(\x_{k-1}) - \hy_{k-2, i} \hgb_{k-2, i}} \label{eq:primal_grad_est} \\
    \bfb_{k-1/2} &= \hfb_{k} + \frac{a_{k-1}}{ q_{Q_k} a_k} \sum_{j \in B_{Q_k}} (f_j(\x_{k-1})  - \hf_{k-1, j})\e_j.\label{eq:dual_grad_est}
\end{align}
Even though $\x_k$ is known during the update of $\y_k$, notice that we do not set $\bfb_{k-1/2} = \fb(\x_k)$ in this setting to avoid an $\tilde{O}(nd)$ per-iteration complexity. Also notice that $\hJb_{k-1}^\top \hyb_{k-1}$ can be maintained at an $O(nd/N)$ cost per iteration on average (as opposed to $O(nd)$), because both $\hyb_k$ and $\hJb_k$ only change within a single coordinate block each ($R_k$ and $S_k$, respectively). This is discussed in detail alongside the per-iteration complexities of specific algorithms.

\edit{This section will present two meta-strategies for instantiating \Cref{algo:drago_v2}. While the complexity analyses will hold for any choice of the probability vectors $(\pb, \qb, \rb, \boldsymbol{s})$ (including setting all probabilities uniformly to $1/N$), each algorithm will adapt to the non-uniformity of $(\Gb_1, \ldots, \Gb_N)$ and $(\Lb_1, \ldots, \Lb_N)$ by tuning different hyperparameters. We refer to the first strategy as historical regularization (\Cref{sec:stochastic:historic}), in which $\rb$ and $\boldsymbol{s}$ can harmlessly be set to uniform, whereas the sequences $\br{(\wP_k, \wD_k)}_{k\geq 1}$ and weights of the comparison points $\br{\gamma_I}_{I=1}^N$ are chosen carefully. The second strategy, called non-uniform block replacement, harmlessly sets $\wP_k = \wD_k = 0$ (just as in the previous section), and instead relies on tuning $\rb$ and $\boldsymbol{s}$ to achieve the desired complexity. In both cases, $\pb$ and $\qb$ are chosen to adapt to the choices of the other hyperparameters. We provide the mathematical components that will be common to both analyses upfront. Then, \Cref{sec:stochastic:historic} and \Cref{sec:stochastic:minty} provide the full algorithms (\Cref{algo:stochastic:historic} and \Cref{algo:stochastic:minty}, respectively), and prove the main results.
}

We first collect the probabilistic notation used in this manuscript.
We have introduced four random variables for any iteration $k$: $P_k \sim \pb$, $Q_k \sim \qb$, $R_k \sim \rb$, and $S_k \sim \boldsymbol{s}$. To formally analyze the resulting algorithm, consider a filtered probability space $\mc{P} = (\Omega, \p{\mc{F}_k}_{k \geq 0}, \prob)$, where we use the natural filtration $(\mc{F}_k)_{k \geq 0}$ (with $\mc{F}_0 = \br{\varnothing, \Omega}$ and $\mc{F}_k$ being the $\sigma$-algebra generated by the collection $\{(P_\kappa, Q_\kappa, R_\kappa, S_\kappa)\}_{\kappa=1}^{k}$). We also denote by $\mc{F}_{k-1/2}$ the $\sigma$-algebra generated by $\br{(P_\kappa, Q_\kappa, R_\kappa, S_\kappa)}_{\kappa=1}^{k-1} \cup \br{P_k}$, which captures information up until and including the computation of $\x_k$ (but not $\y_k$). Thus, in the language of probability theory, we will say that $\x_k$ is $\mc{F}_{k-1/2}$-measurable and $\y_k$ is $\mc{F}_{k}$-measurable. The full or marginal expectation on $\mc{P}$ is given by $\Ex$, whereas the conditional expectation given $\mc{F}_k$ is denoted by $\Ex_{k}$. We let $\z_{k, J} = \p{z_{k, j}}_{j \in B_J}$ be the block $B_J$ coordinates of a time-indexed vector $\z_k \in \R^n$. In the arguments below, we will always consider $(\u, \v)$ that is independent of $\{(P_\kappa, Q_\kappa, R_\kappa, S_\kappa)\}_{\kappa \geq 1}$. In \Cref{sec:discussion:sup_exp_vs_exp_sup}, we then precisely describe how this assumption can be relaxed to achieve the same complexity guarantees for a stronger convergence criterion than the one in \Cref{thm:stochastic:historic} and \Cref{thm:stochastic:minty}.

We proceed to the convergence analysis.
The primal-dual sequence $(\x_k, \y_k)_{k \geq 0}$ is now a stochastic process; we do not distinguish random variables and realizations when clear from context. We will use the following fact throughout this section: because $\norm{\cdot}_\Y$ is an $\ell_p$-norm with $p \in [1, 2]$ (see \Cref{sec:preliminaries}), it holds that $\norm{\cdot}_\Y \geq \norm{\cdot}_2$ and $\norm{\cdot}_{\Y^*} \leq \norm{\cdot}_2$. We will use similar techniques as before to upper bound $\sum_{k=1}^t a_k\Ex[\Gap^{\u, \v}(\x_k, \y_k)]$. First, either by using \Cref{lem:lower} and \Cref{lem:upper}, we produce a lower bound for $a_k\Lcal(\u, \y_k)$ and an upper bound for $a_k\Lcal(\x_k, \v)$. 
Before stating these results, we describe the aspects of the analysis that are similar to the algorithm from \Cref{sec:nonsep:full}. Recall the notation box from \Cref{sec:template}.

As in \Cref{sec:nonsep:full}, we need to use the structure of $\g_{k-1}$ (defined in~\eqref{eq:primal_grad_est}) and $\bfb_{k-1/2}$ (defined in~\eqref{eq:dual_grad_est}) and conditions on $(a_k)_{k\geq 1}$ to control (the expected value of) the inner product terms that appear in~\eqref{eq:stoch:error_primal} and~\eqref{eq:stoch:error_dual} below. We describe the analogous argument to the one leading to~\eqref{eq:full:error}, except for the stochastic setting.
Using $\g_{k-1}$ as an example, we take the conditional expectation of $\IP_k$ from~\eqref{eq:inner_prod_terms} given $\mc{F}_{k-1}$ for $k \geq 1$ (recalling that $\u$ is independent of the algorithm randomness) to write
\begin{align}
    \Ex_{k-1}[\IP_k] &= a_k \Ex_{k-1}\ip{\grad \fb(\x_k)^\top \y_k - \hJb_{k-1}^\top \hyb_{k-1}, \u - \x_k}\notag\\
    &\quad- a_{k-1} \ip{\grad \fb(\x_{k-1})^\top \y_{k-1} - \hJb_{k-2}^\top \hyb_{k-2}, \u - \x_{k-1}} - \Ex_{k-1}[\EP_k]\label{eq:stoch:three_term_decomp}
\end{align}
for the error term
\begin{align}
    \EP_k = a_{k-1} \ip{\tfrac{1}{p_{P_k}}\textstyle\sum_{i \in B_{P_k}} (y_{k-1, i}\grad f_{i}(\x_{k-1}) - \hy_{k-2, i} \hgb_{k-2, i}), \x_{k-1} - \x_{k}}. \label{eq:stoch:error_primal}
\end{align}
Note that the telescoping occurs when taking the marginal expectation $\Ex$ over the entire sequence. The term $\ED_k$ is defined analogously by substituting $\bfb_{k-1/2}$ from~\eqref{eq:dual_grad_est} into the expression for $\ID_k$, yielding
\begin{align}
    \ED_k = a_{k-1} \ip{\tfrac{1}{q_{Q_k}}\textstyle\sum_{j \in B_{Q_k}} \p{f_j(\x_{k-1}) - f_j(\hxb_{k-1, Q_k})}\e_j, \y_{k} - \y_{k-1}}. \label{eq:stoch:error_dual}
\end{align}

We summarize the above using an identity card.
\begin{mdframed}[userdefinedwidth=\linewidth, align=center, linewidth=0.3mm]
\centering
\begin{tabular}{l}
     \textbf{Identity Card 2:} Stochastic update method for general objectives\\
     \toprule
     $\sub_1$: $\g_{k-1} = \hJb_{k-1}^\top \hyb_{k-1} + \frac{a_{k-1}}{p_{P_k} a_k}\sum_{i \in B_{P_k}}\p{y_{k-1, i}\grad f_{i}(\x_{k-1}) - \hy_{k-2, i} \hgb_{k-2, i}}$\\
     $\sub_2$: $\bfb_{k-1/2} = \hfb_{k} + \frac{a_{k-1}}{ q_{Q_k} a_k} \sum_{j \in B_{Q_k}} (f_j(\x_{k-1})  - \hf_{k-1, j})\e_j$\\
     $\sub_3$: Update $(\hxb_{k, I}, \hyb_{k, I})$ for all $I$ using~\eqref{eq:table_elements} and $(\hJb_k, \hfb_k)$ using~\eqref{eq:first_order_table}.\\
     \midrule
     \emph{Primal error:} $\EP_k = a_{k-1} \ip{\tfrac{1}{p_{P_k}}\textstyle\sum_{i \in B_{P_k}} (y_{k-1, i}\grad f_{i}(\x_{k-1}) - \hy_{k-2, i} \hgb_{k-2, i}), \x_{k-1} - \x_{k}}$\\
     \emph{Dual error:} $\ED_k = a_{k-1} \ip{\tfrac{1}{q_{Q_k}}\textstyle\sum_{j \in B_{Q_k}} \p{f_j(\x_{k-1}) - f_j(\hxb_{k-1, Q_k})}\e_j, \y_k - \y_{k-1}}$\\
\end{tabular}
\end{mdframed}

Notice that the terms $\EP_k$ and $\ED_k$ that appear in~\eqref{eq:stoch:error_primal} and~\eqref{eq:stoch:error_dual} are measurements of ``table bias'', or how stale the elements in the tables are compared to the current iterates $\x_k$ (for $\EP_k$) and $\y_k$ (for $\ED_k$). The algorithms below provide two different strategies for achieving convergence while controlling these errors. Because terms of the form $\EP_k$ will appear in multiple analyses, we collect a repeatedly used bound below.
\begin{lemma}\label[lemma]{lem:separation}
    Consider $\mc{F}_{k-1}$-measurable random vectors $\x$ and $\hxb_1, \ldots, \hxb_N$ realized in $\X$, and similarly, let $\mc{F}_{k-1}$-measurable $\y$ and $\hyb$ be realized in $\Y$. For any collection of positive constants $(b_I)_{I=1}^N$ and $(c_I)_{I=1}^N$, we have that
    \begin{align*}
        &\Ex\norm{\tfrac{1}{p_{P_k}}\textstyle\sum_{i \in B_{P_k}} (y_{i}\grad f_{i}(\x) - \hy_{i} \grad f_{i}(\hxb_{P_k}))}_{\X^*}^2\\
        &\leq 2 \p{\max_J \frac{\Lb_J^2}{p_J b_J}}  \sum_{I=1}^N b_I \Ex\norm{\x - \hxb_I}_{\X}^2 +  2\p{\max_J \frac{\Gb_J^2}{p_J c_J}}  \sum_{I=1}^N c_I \Ex\norm{\y_{I} - \hyb_{I}}_2^2.
    \end{align*}
\end{lemma}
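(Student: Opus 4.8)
The plan is to condition on $\F_{k-1}$ first. Since $\x,\hxb_1,\dots,\hxb_N,\y,\hyb$ are all $\F_{k-1}$-measurable, they act as constants under $\Ex_{k-1}[\cdot]$, and the only randomness left is the draw $P_k\sim\pb$. Because the argument inside the norm carries the factor $1/p_{P_k}$, squaring and taking the conditional expectation collapses the block index to a weighted sum:
\begin{align*}
    \Ex_{k-1}\norm{\tfrac{1}{p_{P_k}}\textstyle\sum_{i \in B_{P_k}} (y_{i}\grad f_{i}(\x) - \hy_{i} \grad f_{i}(\hxb_{P_k}))}_{\X^*}^2 = \sum_{J=1}^N \frac{1}{p_J}\norm{\textstyle\sum_{i \in B_J} \p{y_{i}\grad f_{i}(\x) - \hy_{i} \grad f_{i}(\hxb_{J})}}_{\X^*}^2.
\end{align*}

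Next, for each block $J$ I would add and subtract $\sum_{i\in B_J} y_i\grad f_i(\hxb_J)$ to split the inner sum into a ``smoothness part'' $\sum_{i\in B_J} y_i\p{\grad f_i(\x)-\grad f_i(\hxb_J)}$ and a ``Lipschitz part'' $\sum_{i\in B_J}(y_i-\hy_i)\grad f_i(\hxb_J)$, then apply $\norm{\va+\vb}_{\X^*}^2\le 2\norm{\va}_{\X^*}^2+2\norm{\vb}_{\X^*}^2$. The smoothness part is bounded using~\eqref{eq:ind_smth} with $\x'=\hxb_J$ (legitimate since $\y$ is realized in $\Y$), yielding $\bL_J^2\norm{\x-\hxb_J}_{\X}^2$; the Lipschitz part is bounded using~\eqref{eq:ind_lip} evaluated at the point $\hxb_J\in\X$ with coordinate vector $\z=\y_J-\hyb_J$, yielding $\bG_J^2\norm{\y_J-\hyb_J}_2^2$. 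Substituting these in shows the conditional expectation is at most $\sum_{J=1}^N\tfrac{2}{p_J}\p{\bL_J^2\norm{\x-\hxb_J}_{\X}^2+\bG_J^2\norm{\y_J-\hyb_J}_2^2}$.

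Finally, I would perform the reweighting $\tfrac{\bL_J^2}{p_J}=\tfrac{\bL_J^2}{p_J b_J}\cdot b_J\le\p{\max_I\tfrac{\bL_I^2}{p_I b_I}}b_J$ and similarly $\tfrac{\bG_J^2}{p_J}\le\p{\max_I\tfrac{\bG_I^2}{p_I c_I}}c_J$, pull the two maxima outside the sums, and then pass to the outer (marginal) expectation via the tower property, using that $\x,\hxb_I,\y,\hyb$ are again random under $\Ex$. This yields exactly the claimed inequality. There is no substantial obstacle in this argument; the points that need care are (i) recording that $\F_{k-1}$-measurability lets the vectors be treated as constants for the inner expectation, (ii) checking that the hypotheses of~\eqref{eq:ind_lip} and~\eqref{eq:ind_smth} apply to the specific realized point $\hxb_J$ and vector $\y$, and (iii) choosing the ``right'' cross term in the add-and-subtract step so the two resulting pieces line up with the two halves of \Cref{asm:smoothness}.
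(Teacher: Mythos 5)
Your proposal is correct and follows essentially the same route as the paper: condition on $\F_{k-1}$ to turn the random block index into a $1/p_J$-weighted sum, split each block term via Young's inequality into the smoothness and Lipschitz pieces, invoke~\eqref{eq:ind_smth} and~\eqref{eq:ind_lip} from \Cref{asm:smoothness}, reweight by $b_J$ and $c_J$ to extract the maxima, and finish with the tower property. The only cosmetic difference is that the paper first passes through the triangle-inequality bound $\sum_{i\in B_I}\abs{y_i-\hy_i}\norm{\grad f_i(\hxb_I)}_{\X^*}$ before invoking the Lipschitz condition, whereas you apply~\eqref{eq:ind_lip} directly to the weighted sum with $\z=\y_J-\hyb_J$, which is if anything a slightly more direct use of the assumption.
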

\begin{proof}
    First, take the conditional expectation given $\mc{F}_{k-1}$ so that
    \begin{align*}
        &\Ex_{k-1}\norm{\tfrac{1}{p_{P_k}}\textstyle\sum_{i \in B_{P_k}} (y_{i}\grad f_{i}(\x) - \hy_{i} \grad f_{i}(\hxb_{P_k}))}_{\X^*}^2\\
        &= \sum_{I=1}^N \frac{1}{p_I}\norm{\textstyle\sum_{i \in B_{I}} (y_{i}\grad f_{i}(\x) - \hy_{i} \grad f_{i}(\hxb_I))}_{\X^*}^2\\
    \end{align*}
    Applying Young's inequality, the quantity above is upper bounded via
    \begin{align*}
        &\sum_{I=1}^N \frac{2}{p_I} \sbr{\norm{\textstyle\sum_{i \in B_{I}} y_{i}(\grad f_{i}(\x) - \grad f_{i}(\hxb_I))}_{\X^*}^2 + \p{\textstyle\sum_{i \in B_{I}} \abs{y_{i} - \hy_{i}}\norm{\grad f_{i}(\hxb_I)}_{\X^*}}^2}\\
        &\overset{(\circ)}{\leq} \sum_{I=1}^N \frac{2}{p_I} \sbr{\Lb_I^2\norm{\x - \hxb_I}_{\X}^2 + \Gb_I^2 \norm{\y_{I} - \hyb_{I}}_2^2}\\
        &\leq 2 \p{\max_J \frac{\Lb_J^2}{p_J b_J}}  \sum_{I=1}^N b_I \norm{\x - \hxb_I}_{\X}^2 +  2\p{\max_J \frac{\Gb_J^2}{p_J c_J}}  \sum_{I=1}^N c_I \norm{\y_{I} - \hyb_{I}}_2^2.
    \end{align*}
    where we used \Cref{asm:smoothness} in $(\circ)$. Take the marginal expectation to complete the proof.
\end{proof}

\subsection{Strategy 1: Non-Uniform Historical Regularization}\label{sec:stochastic:historic}
\begin{algorithm}[t]
\setstretch{1.2}
   \caption{Historical Regularization Version of \Cref{algo:drago_v2} (Non-Separable)}
   \label{algo:stochastic:historic}
    \begin{algorithmic}[1]
   \State {\bfseries Input:} Initial point $(\x_0, \y_0)$, averaging sequence $(a_k)_{k=0}^t$, non-negative weights $(\gamma_I)_{I=1}^N$ that sum to one, balancing sequences $(\wP_k)_{k=1}^t$ and $(\wD_k)_{k=1}^t$, sampling vectors $(\pb, \qb)$.
   \State Initialize the comparison points $\hxb_{0, I} = \x_0$ for all $I \in [N]$ and $\hyb_0 = \y_0$. Initialize the table $\hfb_0 = \fb(\x_0)$ and $\hJb_0 = \grad \fb(\x_0)$, and compute the product $\hJb_{0}^\top \hyb_0$.
   \State Letting $\varphi$ be the generator of $\breg{\X}{\cdot}{\cdot}$, compute $\br{\grad \varphi(\hxb_{0, I})}_{I=1}^N$ and $\sum_I \gamma_I \grad \varphi(\hxb_{0, I})$.
   \For {$k=1$ {\bfseries to} $t$}
       \State $\sub_1$: Compute $\g_{k-1}$ using $\pb$ and~\eqref{eq:primal_grad_est}.
       \State Perform the primal update
       \begin{align}
       \x_k &= \argmin_{\x \in \X} \Big\{a_k \ip{\g_{k-1}, \x} + a_k\phi(\x) + \tfrac{A_{k-1}\mu + \mu_0}{2} \varphi(\x)\notag \\
       &\quad - \tfrac{A_{k-1}\mu + \mu_0}{2}\ip{(1-\wP_k) \grad \varphi(\x_{k-1}) + \wP_k\textstyle\sum_{I=1}^N \gamma_I \grad \varphi(\hxb_{k-1, I}), \x}\Big\}.\notag
       \end{align}
       \State $\sub_2$: Draw $R_k$ uniformly on $[N]$, and update $(\hxb_{k, I})_{I=1}^N$ via~\eqref{eq:table_elements}. Compute $\bfb_{k-1/2}$ using $\qb$ and~\eqref{eq:dual_grad_est}.
       \State Perform the dual update
       \begin{align}
           \y_k &= \argmax_{\y \in \Y} \Big\{ a_k \ip{\y, \bfb_{k-1/2}} - a_k\psi(\y) \notag\\ 
           &\quad - (A_{k-1}\nu + \nu_{0}) \Big(\tfrac{1-\wD_k}{2} \breg{\Y}{\y}{\y_{k-1}} + \tfrac{\wD_k}{2} \breg{\Y}{\y}{\hyb_{k-1}}\Big)\Big\}. \notag
       \end{align}
       \State $\sub_3$: Draw $S_k$ uniformly and update $\hyb_k$ via~\eqref{eq:table_elements}. Update the product $\hJb_{k}^\top \hyb_k$ via~\eqref{eq:g_replacement}, and update $\br{\grad \varphi(\hxb_{k, I})}_{I=1}^N$ and $\sum_I \gamma_I \grad \varphi(\hxb_{k, I})$ via~\eqref{eq:breg_replacement}.
   \EndFor
   \Return $A_t^{-1} \sum_{k=1}^t a_k(\x_k, \y_k)$.
\end{algorithmic}

\end{algorithm}

\edit{
The full form of the algorithm is given in \Cref{algo:stochastic:historic}, for which we may have $\wP_k > 0$ or $\wD_k > 0$ in general. For this reason, we expand the definition of the Bregman divergence $\breg{\X}{\cdot}{\cdot}$ in terms of the generator $\varphi$ to better display the computational aspects of the historical regularization term. In particular, if we show that the optimization problem~\eqref{eq:prim_update} can be solved at $\tilde{O}(d)$ cost, then the total arithmetic complexity is given by $\tilde{O}(n(d+N)/N)$, where we assume uniform block sizes for arithmetic complexity discussions. The relevant terms are the matrix-vector product $\hJb_{k}^\top \hyb_k$ and the sum of weighted Bregman divergences. 
}

For the former, let $\hJb_{k, I} \in \R^{n/N \times d}$ denote the matrix containing the primal gradients for the elements in block $I$. Noting that $R_k$ is the block of the primal table that is updated at each iteration and $S_k$ plays the same role for the dual table, it holds that
\begin{align}
    \hJb_{k}^\top \hyb_k &= \hJb_{k-1}^\top \red{\hyb_k} + (\hJb_{k, R_k} - \hJb_{k-1, R_k})\hyb_{k, R_k} \notag\\
    &= \hJb_{k-1}^\top \p{\red{\hyb_{k-1} + \textstyle\sum_{j \in B_{S_k}}(\hy_{k, j} - \hy_{k-1, j})\e_j }} + (\hJb_{k, R_k} - \hJb_{k-1, R_k})^\top\hyb_{k, R_k} \notag\\
    &= \hJb_{k-1}^\top \hyb_{k-1} + \hJb_{k-1, S_k}^\top(\red{\hyb_{k, S_k} - \hyb_{k-1, S_k}}) + (\hJb_{k, R_k} - \hJb_{k-1, R_k})^\top\blue{\hyb_{k, R_k}} \label{eq:g_replacement}.
\end{align}
Assuming that $\hJb_{k-1}^\top \hyb_{k-1} \in \R^{d}$ is already stored, everything above can be computed with $\hJb_{k, R_k}$, $\hyb_{k, S_k}$, and past information at cost $O(nd/N)$. Furthermore, we need not retain the entire table $\hJb_{k} \in \R^{n \times d}$, as $\hJb_{k-1, R_k}$ and $\hJb_{k-1, S_k}$ above can be recomputed from $\hxb_{k-1, R_k}$ and $\hxb_{k-1, R_k}$. The entire memory footprint is $O(Nd + n)$, which could be much smaller than $O(nd)$ (for instance, when $N = d$).

For the latter, letting $\varphi$ be the generator of $\breg{\X}{\cdot}{\cdot}$, we write
\begin{align*}
    \sum_{I=1}^N \gamma_I \breg{\X}{\x}{\hxb_{k, I}} = \varphi(\x) + \ip{\sum_{I=1}^N \gamma_I\grad \varphi(\hxb_{k, I}), \x} + \operatorname{const}(\x),
\end{align*}
where the term $\operatorname{const}(\x)$ does not vary with respect to $\x$. It then holds that
\begin{align}
    \sum_{I=1}^N \gamma_I\grad \varphi(\hxb_{k, I}) = \sum_{I=1}^N \gamma_I \grad \varphi(\hxb_{k-1, I}) + \gamma_{R_k}\p{\grad \varphi(\hxb_{k, R_k}) - \grad \varphi(\hxb_{k-1, R_k})},\label{eq:breg_replacement}
\end{align}
so we need only compute $\grad \varphi(\hxb_{k, R_k})$ at each iteration. Retaining the $(\grad \varphi(\hxb_{k, I}))_{I=1}^N$ comes at an $O(Nd)$ storage cost, which is the same as the table itself. The total per-iteration complexity is then $\tilde{O}(n(d+N)/N)$. \edit{Having understood the algorithm and its per-iteration complexity, it remains to analyze the number of iterations required to reduce the expected primal-dual gap. Recalling \Cref{claim:gap}, the expected $\IP_k$ and $\ID_k$ terms have been replaced by the expectation of $\EP_k$ and $\ED_k$ from Identity Card 2. We also need to control~\eqref{eq:stoch:gap3_primal} and~\eqref{eq:stoch:gap3_dual}, which vanish only when $\wP_k = \wD_k = 0$.
}

As such, the goal will be to select the balancing sequences $(\wP_k)_{k \geq 1}$ and $(\wD_k)_{k \geq 1}$ to achieve \edit{these cancellations and} the desired complexity guarantee. 
The rate conditions will be stated in terms of three quantities that largely depend on the sampling schemes $\pb$ and $\qb$ (which are used to define $\g_{k-1}$ and $\bfb_{k-1/2}$) and the primal regularization weights $\bgam$. Those are
\begin{align}
    \Gb_{\pb} := \sqrt{\textstyle\max_I \frac{\Gb_I^2}{p_I}}, \Lb_{\pb, \bgam} := \sqrt{\textstyle\max_I\frac{\Lb_I^2}{p_I \gamma_I}}, \text{ and } \Gb_{\qb, \bgam} := \sqrt{\textstyle\max_I \frac{\Gb_I^2}{q_I \gamma_I}}.\label{eq:stoch:historic:constants}
\end{align}
Recall that the vectors $\rb = (r_1, \ldots, r_N)$ and $\sbold = (s_1, \ldots, s_N)$ will be set to the uniform vectors $\rb = \ones/N$ and $\sbold=\ones/N$ as they will not affect the convergence rates in this analysis. \edit{The organization of the upcoming results is similar to \Cref{sec:template:first}. Recall the initial distance $D_0$ from~\eqref{eq:initial_distance}. 
}
\begin{thm}\label{thm:stochastic:historic}
    Under \Cref{asm:smoothness} and \Cref{asm:str_cvx}, consider any $\u \in \X$, $\v \in \Y$ and precision $\epsilon > 0$. 
    There exists a choice of the sequence $(a_k)_{k=1}^t$, 
    and the parameters $\wP_k$ and $\wD_k$ such that \Cref{algo:stochastic:historic} produces an output point $(\tx_t, \ty_t) \in \X \times \Y$ satisfying $\E{}{\gap^{\u, \v}(\tx_t, \ty_t)} \leq \epsilon$ for $t$ according to the following iteration complexities. \edit{In the following, $a_k$, $\wP_k$, $\wD_k$, and the iteration complexity are stated in big-$O$ terms, with 
    \begin{align*}
        \alpha \sim \min\br{{\sqrt{\mu\nu}}/({\sqrt{N}(\Gb_{\pb} \vee  \Gb_{\qb, \bgam}))}, {\mu}/({\sqrt{N}\Lb_{\pb, \bgam})}, 1/N}.
    \end{align*}
    }
    \begin{center}
    \begin{adjustbox}{max width=\linewidth}
    \begin{tabular}{ccccc}
    \toprule
        {\bf Case} & \edit{{\bf $a_k$ }} & \edit{{\bf $\wP_k$}} & \edit{{\bf $\wD_k$}} & {\bf Iteration Complexity}\\
        \midrule
        $\mu > 0$, $\nu > 0$
        & \edit{$(1+\alpha)^k$} & \edit{$1/N$} & \edit{$1/N$}
        &
        $\p{N + \frac{\sqrt{N}\Lb_{\pb, \bgam}}{\mu} + \frac{\sqrt{N}(\Gb_{\pb} \vee  \Gb_{\qb, \bgam})}{\sqrt{\mu\nu}}}\ln\p{\frac{1}{\epsilon}}$
        \\
        $\mu > 0$, $\nu = 0$
        & \edit{$k$} & \edit{$1/N^2$} & \edit{$1/N$}
        &
        $\p{N + \frac{N\Lb_{\pb, \bgam}}{\mu}}\ln\p{\frac{1}{\epsilon}} + (\sqrt{N}\Gb_{\pb} \vee N\Gb_{\qb, \bgam})\sqrt{\frac{\sqrt{\mu_0/\nu_0}D_0 + (a_1\mu/\nu_0)\breg{\X}{\u}{\x_0}}{\mu \epsilon}}$
        \\
        $\mu = 0$, $\nu > 0$
        & \edit{$1$} & \edit{$1/N$} & \edit{$1/N^2$}
        &
        $N\ln\p{\frac{1}{\epsilon}} + \frac{\sqrt{N}\Lb_{\pb, \bgam}\sqrt{\nu_0/\mu_0}D_0}{\epsilon} + (N\Gb_{\pb} \vee \sqrt{N}\Gb_{\qb, \bgam})\sqrt{\frac{\sqrt{\nu_0/\mu_0}D_0+ (a_1\nu/\mu_0)\breg{\Y}{\v}{\y_0}}{\nu \epsilon}}$
        \\
        $\mu = 0$, $\nu = 0$
        & \edit{$1$} & \edit{$1/N$} & \edit{$1/N$}
        &
        $N\ln\p{\frac{1}{\epsilon}} +\frac{\p{\sqrt{N}\Lb_{\pb, \bgam}\sqrt{\nu_0/\mu_0}  + \sqrt{N}(\Gb_{\pb} \vee  \Gb_{\qb, \bgam})}D_0}{\epsilon}$\\
    \bottomrule
    \end{tabular}
    \end{adjustbox}
    \end{center}
\end{thm}
\edit{The problem constants appearing in the complexities can be written in terms of the underlying Lipschitz and smoothness constants from \Cref{asm:smoothness} by selecting the sampling schemes $\pb$ and $\qb$.}
Recall from \Cref{sec:preliminaries} that $\lambb = (\lamb_1, \ldots, \lamb_N)$ for $\lamb_I := \sqrt{\Gb_I^2 + \Lb_I^2}$ along with the constants from~\eqref{eq:stoch:historic:constants}. The non-uniform sampling complexity given below follows by letting $p_I \propto \lamb_I$, $\gamma_I \propto \lamb_I$, and $q_I \propto \Gb_I$. The constants appearing in \Cref{thm:stochastic:historic} are tabulated below.
\begin{center}
\begin{adjustbox}{max width=\linewidth}
\begin{tabular}{c|cc}
\toprule
    {\bf Constant} & {\bf Uniform Sampling} & {\bf Non-Uniform Sampling}\\
    \midrule
    $\Gb_{\pb} \vee \Gb_{\qb, \rb} $ & $N\norm{\Gbb}_\infty$ & $\norm{\lambb}_{1}^{1/2} \norm{\Gbb}_{1}^{1/2}$\\
    $\Lb_{\pb, \bgam}$ & $N\norm{\Lbb}_\infty$ & $\norm{\lambb}_1$\\
    \bottomrule
\end{tabular}
\end{adjustbox}
\end{center}

\edit{\Cref{prop:stochastic:historic} establishes conditions on $\br{(a_k, \wP_k, \wD_k)}_{k=1}^t$ such that convergence can be argued via \Cref{claim:gap}. } For readability, we provide a version of the proof for when~\eqref{eq:semilinear} is dual separable (but not necessarily the feasible set). We discuss how this assumption can be avoided by a minor modification of the algorithm after the proof.
\begin{prop}\label[proposition]{prop:stochastic:historic}
    Let $(\x_0, \y_0) \in \ri(\dom(\phi)) \times \ri(\dom(\psi))$ and $\br{(\x_k, \y_k)}_{k\geq 1}$ be generated by the update from \Cref{lem:lower} with non-decreasing sequence $\wP_k$ and \Cref{lem:upper} with non-decreasing sequence $\wD_k$, with $\g_{k-1}$ and $\bfb_{k-1/2}$ given by~\eqref{eq:primal_grad_est} and~\eqref{eq:dual_grad_est}, respectively. Define $a_1 = \min\br{\frac{\sqrt{\wD_0\mu_0 \nu_0}}{4\Gb_{\pb}}, \frac{\sqrt{\wP_0}\mu_0}{4\sqrt{2}\Lb_{\pb, \bgam}}, \frac{\sqrt{\wP_0\mu_0 \nu_0}}{4\Gb_{\qb, \bgam}}}$ and select $(a_k)_{k \geq 2}$ such that the conditions
    \begin{align}
        a_{k} &\leq \tfrac{1}{4\sqrt{2}}\br{\tfrac{\sqrt{\wD_{k}(A_{k}\mu + \mu_0)(A_{k-1}\nu + \nu_0)}}{\Gb_{\pb}}, \tfrac{\sqrt{\wP_{k}(A_{k}\mu + \mu_0)(A_{k-1}\mu + \mu_0)}}{\Lb_{\pb, \bgam}}, \tfrac{\sqrt{\wP_{k}(A_{k}\nu + \nu_0)(A_{k-1}\mu + \mu_0)}}{\Gb_{\qb, \bgam}}} \label{eq:stoch:cond}
    \end{align}
    are satisfied. In addition, impose that for any $\ell = 1, \ldots, t-1$, it holds that
    \begin{align}
        \frac{\mu}{N}\sum_{k'=0}^{\infty}  \wP_{\ell + k' + 1} A_{\ell + k'}  (1-1/N)^{k'} &\leq  (\wP_\ell A_\ell + a_\ell)\mu, \label{eq:stoch:primal:decay}\\
        \frac{\nu}{N} \sum_{k'=0}^{\infty}  \wD_{\ell + k' + 1} A_{\ell + k'} (1-1/N)^{k'} &\leq  (\wD_\ell A_\ell + a_\ell)\nu. \label{eq:stoch:dual:decay}
    \end{align}
    We have that for any $\u \in \X$ and $\v \in \Y$, 
    \begin{align*}
        \sum_{k=1}^t a_k \Ex[\gap^{\u, \v}(\x_k, \y_k)] + \tfrac{1}{4}\Ex[\TP_t] + \tfrac{1}{4}\Ex[\TD_t]
        &\leq C_{\X}\breg{\X}{\u}{\x_{0}} + C_{\Y}\breg{\Y}{\v}{\y_{0}}.
    \end{align*}
    for constants $C_{\X} = \tfrac{1}{2}\p{\mu_0 + \textstyle\sum_{\ell=0}^{\infty} \wP_{\ell+1} (A_{\ell}\mu + \mu_0)  (1-1/N)^{\ell}}$ and $C_{\Y} = \tfrac{1}{2}\p{\nu_0 + \textstyle\sum_{\ell=0}^{\infty} \wD_{\ell+1} (A_{\ell}\nu + \nu_0)  (1-1/N)^{\ell}}$.
\end{prop}
\begin{proof}
    Our starting point is \Cref{claim:gap}, after which we must show that the terms in~\eqref{eq:stoch:gap2_cancellation},~\eqref{eq:stoch:gap3_primal}, and~\eqref{eq:stoch:gap3_dual} are bounded by a quantity that does not grow with $t$. Recall that for~\eqref{eq:stoch:gap2_cancellation},
    we used the three-term decomposition~\eqref{eq:stoch:three_term_decomp}, which generated two telescoping terms and one error term. The first part of the proof uses this argument and bounds the error term.

    \paragraph{1. Controlling~\eqref{eq:stoch:gap2_cancellation}:} We follow the arguments at the beginning of this section to produce $\EP_k$ in~\eqref{eq:stoch:error_primal} and $\ED_k$ in~\eqref{eq:stoch:error_dual}. We first upper bound the error terms $\Ex[\EP_k]$ for $k = 2, \ldots, t-1$ (noting that $\EP_1 = \ED_1 = 0$) and the last term of the telescoping inner products in~\eqref{eq:stoch:three_term_decomp}.
    By Young's inequality with parameter $(1-\wP_k)(A_{k-1}\mu + \mu_0)/4$, we have
    \begin{align}
        \Ex[\EP_k] &= a_{k-1} \Ex\ip{\tfrac{1}{p_{P_k}}\textstyle\sum_{i \in B_{P_k}} (y_{k-1, i}\grad f_{i}(\x_{k-1}) - \hy_{k-2, i} \hgb_{k-2, i}),  \x_{k-1} - \x_{k}} \label{eq:stoch:primal:middle} \\
        &\leq \frac{1-\wP_k}{4} \Ex[\CP_k]+ \frac{2a_{k-1}^2}{(1-\wP_k)(A_{k-1}\mu + \mu_0)} \Ex\norm{\tfrac{1}{p_{P_k}}\textstyle\sum_{i \in B_{P_k}} (y_{k-1, i}\grad f_{i}(\x_{k-1}) - \hy_{k-2, i} \hgb_{k-2, i})}_{\X^*}^2 \notag
    \end{align}
    and for the last term of the decomposition~\eqref{eq:stoch:three_term_decomp}, \edit{consider a random variable $P_{t+1} \sim \pb$ that is independent of all components of the algorithm. Then, it holds that}
    \begin{align}
        &-\Ex\sbr{a_t \ip{\grad \fb(\x_t)^\top \y_t - \hJb_{t-1}^\top \hyb_{t-1}, \u - \x_t}} = \notag\\
        &-\Ex\sbr{a_t \ip{\tfrac{1}{p_{P_{t+1}}}\textstyle\sum_{i \in B_{P_{t+1}}} (y_{t, i}\grad f_{i}(\x_{t}) - \hy_{t-1, i} \hgb_{t-1, i}), \u - \x_t}} \leq \notag\\
        &\frac{1-\wP_t}{4}\Ex[\TP_{t}] + \frac{2a_{t}^2}{(1-\wP_t)(A_{t}\mu + \mu_0)} \Ex\norm{\tfrac{1}{p_{P_{t+1}}}\textstyle\sum_{i \in B_{P_{t+1}}} (y_{t, i}\grad f_{i}(\x_{t}) - \hy_{t-1, i} \hgb_{t-1, i})}_{\X^*}^2  \label{eq:stoch:primal:last}
    \end{align} 
    \edit{wherein the expectation is taken over $P_{t+1}$ as well.}
    To handle the second term in~\eqref{eq:stoch:primal:middle} and~\eqref{eq:stoch:primal:last} for $k \in \br{2, \ldots, t}$, 
    apply \Cref{lem:separation} with $b_I = \gamma_I$ and $c_I = 1$ to achieve
    \begin{align*}
        &\Ex\norm{\tfrac{1}{p_{P_k}}\textstyle\sum_{i \in B_{P_k}} (y_{k-1, i}\grad f_{i}(\x_{k-1}) - \hy_{k-2, i} \hgb_{k-2, i})}_{\X^*}^2 \\
        &\quad \leq 2\p{\max_I \frac{\Lb_J^2}{p_J \gamma_J}} \cdot \sum_{I=1}^N \gamma_I \Ex\norm{\x_{k-1} - \hxb_{k-2, I}}_\X^2 + 2 \p{\max_J \frac{\Gb_J^2}{p_J}} \Ex\norm{\y_{k-1} - \hyb_{k-2}}_2^2\\
        &\quad \leq 4\underbrace{\p{\max_J \frac{\Lb_J^2}{p_J \gamma_J}}}_{\Lb_{\pb, \bgam}^2} \cdot \sum_{I=1}^N \gamma_I \Ex[\breg{\X}{\x_{k-1}}{\hxb_{k-2, I}}] + 4 \underbrace{\p{\max_J \frac{\Gb_J^2}{p_J}}}_{\Gb_{\pb}^2} \Ex[\breg{\Y}{\y_{k-1}}{\hyb_{k-2}}],
    \end{align*}
    where in the last line we applied $\norm{\cdot}_2 \leq \norm{\cdot}_\Y$ and the strong convexity of Bregman divergences.
    Recall that $\hCP_{k, I} := (A_{k-1}\mu + \mu_0)\breg{\X}{\x_k}{\hxb_{k-1, I}}$ and $\hCD_k := (A_{k-1}\nu + \nu_0)\breg{\Y}{\y_k}{\hyb_{k-1}}$. Combining the steps above, using that $1/(1-\wP_k) \leq 2$, and applying the condition~\eqref{eq:stoch:cond},  we have the upper bound
    \begin{align*}
        \Ex[\EP_k] &\leq \frac{1-\wP_k}{4} \Ex[\CP_k] + \frac{8a_{k-1}^2 \Gb_{\pb}^2  \Ex[\hCD_{k-1}]}{(1-\wP_k)(A_{k-1}\mu + \mu_0)(A_{k-2}\nu + \nu_0)}\\
        &\quad + \frac{8a_{k-1}^2 \Lb_{\pb, \bgam}^2 \E{}{\sum_{I=1}^N \gamma_I \hCP_{k-1, I}}}{(1-\wP_k)(A_{k-1}\mu + \mu_0)(A_{k-2}\mu + \mu_0)}\\
        &\leq  \frac{1-\wP_k}{4} \Ex[\CP_k] + \frac{\wD_{k-1}}{2}\Ex[\hCD_{k-1}]  + \frac{\wP_{k-1}}{4}\E{}{\textstyle\sum_{I=1}^N \gamma_I \hCP_{k-1, I}}  
    \end{align*}
    with a similar bound holding for~\eqref{eq:stoch:primal:last}. These terms will cancel with the corresponding non-positive terms in~\eqref{eq:stoch:gap2_cancellation}.
    
    The upper bounds for $\Ex[\ED_k]$ and $-\Ex[a_k \langle\grad \fb(\x_k) - \hfb_{k}, \v - \y_k\rangle]$ follow by very similar arguments as above.
    Applying Young's inequality with parameter $(1-\wD_k)(A_{k-1}\nu + \nu_0)/4$ we upper bound $\Ex_{k-1/2}[\ED_k]$ via
    \begin{align}
    &a_{k-1} \Ex_{k-1/2}\ip{\tfrac{1}{q_{Q_k}}\textstyle\sum_{j \in B_{Q_k}} (f_{j}(\x_{k-1}) - \hf_{k-1, j})\e_j ,  \y_k - \y_{k-1}} \notag\\
        &\leq \frac{1-\wD_k}{4}\Ex_{k-1/2}[\CD_k] + \frac{2a_{k-1}^2(1-\wD_k)^{-1}}{(A_{k-1} \nu + \nu_0)} \sum_{J=1}^N \frac{1}{q_J}\norm{\textstyle\sum_{j \in B_{J}} \p{f_j(\x_{k-1}) - \hf_{k-1, j}}\e_j}_{\Y^*}^2 \label{eq:ED_k_bound}
    \end{align}
    Recall that the index $R_{k-1}$ determines which block of the primal table is updated. Thus, it holds in conditional expectation that
    \begin{align*}
        \E{k-2}{\norm{\x_{k-1} - \hxb_{k-1, J}}_{\X}^2} &= \E{k-2}{\norm{\x_{k-1} - \hxb_{k-1, J}}_{\X}^2 \mathbbm{1}_{J=R_{k-1}}} + \E{k-2}{\norm{\x_{k-1} - \hxb_{k-1, J}}_{\X}^2 \mathbbm{1}_{J\neq R_{k-1}}}\\
        &= 0 + \E{k-2}{\norm{\x_{k-1} - \hxb_{k-2, J}}_{\X}^2 \mathbbm{1}_{J\neq R_{k-1}}}\\
        &\leq \E{k-2}{\norm{\x_{k-1} - \hxb_{k-2, J}}_{\X}^2}.
    \end{align*}
    Taking the marginal expectation, the second term of~\eqref{eq:ED_k_bound} 
    can be upper bounded as
    \begin{align}
        \sum_{J=1}^N \frac{1}{q_J}\Ex\norm{\textstyle\sum_{j \in B_{J}} \p{f_j(\x_{k-1}) - \hf_{k-1, j}}\e_j}_{\Y^*}^2 &\leq \sum_{J=1}^N \frac{\Gb_J^2}{q_J} \Ex\norm{\x_{k-1} - \hxb_{k-1, J}}_{\X}^2\notag\\
        &\leq \sum_{J=1}^N \frac{\Gb_J^2}{q_J} \Ex\norm{\x_{k-1} - \hxb_{k-2, J}}_{\X}^2\notag\\
        &\leq 2\underbrace{\textstyle\max_I \frac{\Gb_I^2}{q_I \gamma_I}}_{\Gb_{\qb, \bgam}^2} \sum_{J=1}^N  \gamma_J \breg{\X}{\x_{k-1}}{\hxb_{k-2, J}}_{\X}^2.
    \end{align}
    
    Invoking condition~\eqref{eq:stoch:cond} once again and taking the marginal expectation, we have
    \begin{align}
        \Ex[\ED_k] &\leq \frac{1-\wD_k}{4}\Ex[\CD_k] + \frac{4a_{k-1}^2 \Gb_{\qb, \bgam}^2 }{(1-\wD_k)(A_{k-1} \nu + \nu_0)(A_{k-2} \mu + \mu_0)}\E{}{\textstyle\sum_{I=1}^N \gamma_I \hCP_{k-1, I}}\\
        &\leq \frac{1-\wD_k}{4}\Ex[\CD_k] + \frac{\wP_{k-1}}{4}\E{}{\textstyle\sum_{I=1}^N \gamma_I \hCP_{k-1, I}}.
    \end{align}
    Similarly, $-\Ex[a_k \langle\grad \fb(\x_k) - \hfb_{k}, \v - \y_k\rangle] \leq \frac{1-\wD_t}{4}\Ex[\TD_t] + \frac{\wP_{t-1}}{4}\E{}{\textstyle\sum_{I=1}^N \gamma_I \hCP_{t, I}}$.
    All these terms will cancel with corresponding terms in~\eqref{eq:stoch:gap2_cancellation}. 

    \paragraph{2. Controlling~\eqref{eq:stoch:gap3_primal}:}
    For this step, we will express the $\hTP_k$ terms as a function of the $\TP_k$ terms by analyzing the random sampling that governs the block updates. 
    For any $k \geq 1$, write
    \begin{align}
        \sum_{I=1}^N \gamma_I \hTP_{k, I} &= \sum_{I=1}^N \gamma_I \Ex[\breg{\X}{\u}{\hxb_{k, I}}] \notag\\
        &= \sum_{I=1}^N \gamma_I \p{(1/N) \Ex[\breg{\X}{\u}{\x_{k}}] + (1-1/N)\Ex[\breg{\X}{\u}{\hxb_{k-1, I}}]} \notag\\
        &= (1/N)\sum_{k'=0}^{k-1}(1-1/N)^{k'}\Ex[\breg{\X}{\u}{\x_{k-k'}}] + (1-1/N)^k\sum_{I=1}^N \gamma_I\Ex[\breg{\X}{\u}{\hxb_{0, I}}]\notag\\
        &= \red{(1/N)\sum_{k'=0}^{k}(1-1/N)^{k'}\Ex[\breg{\X}{\u}{\x_{k-k'}}]} + \blue{(1-1/N)^{k+1} \Ex[\breg{\X}{\u}{\x_0}]}\label{eq:stoch:primal:table_expansion}
    \end{align}
    Using~\eqref{eq:stoch:primal:table_expansion}, we first expand the non-negative portion of the expression~\eqref{eq:stoch:gap3_primal} via
    \begin{align*}
        \sum_{k=1}^t \wP_k\sum_{I=1}^N \gamma_I \hTP_{k-1, I} &= \sum_{k=0}^{t-1} \wP_{k+1}\sum_{I=1}^N \gamma_I \hTP_{k, I}\\
        &= \sum_{k=0}^{t-1} \wP_{k+1} (A_{k}\mu + \mu_0) \red{(1/N)\sum_{k'=0}^{k}(1-1/N)^{k'}\Ex[\breg{\X}{\u}{\x_{k-k'}}]}\\
        &\quad + \sum_{k=0}^{t-1} \wP_{k+1} (A_{k}\mu + \mu_0)  \blue{(1-1/N)^{k+1} \breg{\X}{\u}{\x_0}}.
    \end{align*}
    We expand the first of the two terms above:
    \begin{align*}
        &\sum_{k=0}^{t-1} \wP_{k+1} (A_{k}\mu + \mu_0) \red{(1/N)\sum_{k'=0}^{k}(1-1/N)^{k'}\Ex[\breg{\X}{\u}{\x_{k-k'}}]}\\
        &= \sum_{k'=0}^{t-1} \sum_{k=k'}^{t-1} \wP_{k+1} (A_{k}\mu + \mu_0) (1/N)(1-1/N))^{k'}\Ex[\breg{\X}{\u}{\x_{k-k'}}] & \text{exchange sums}\\
        &= \sum_{k'=0}^{t-1} \sum_{\ell=0}^{t-1-k'} \wP_{\ell + k' + 1} (A_{\ell + k'}\mu + \mu_0) \Ex[\breg{\X}{\u}{\x_{\ell}}] (1/N) (1-1/N)^{k'} & \text{$\ell := k-k'$}\\
        &\leq \sum_{\ell=0}^{t-1} \sum_{k'=0}^{\infty}  \wP_{\ell + k' + 1} (A_{\ell + k'}\mu + \mu_0) \Ex[\breg{\X}{\u}{\x_{\ell}}] (1/N) (1-1/N)^{k'}.
    \end{align*}
    For the second, we simply factor out the initial distance term via
    \begin{align*}
        &\sum_{k=0}^{t-1} \wP_{k+1} (A_{k}\mu + \mu_0)  \blue{(1-1/N)^{k+1} \breg{\X}{\u}{\x_0}}\\
        &\leq \p{\sum_{\ell=0}^{\infty} \wP_{\ell+1} (A_{\ell}\mu + \mu_0)  (1-1/N)^{\ell+1}}\breg{\X}{\u}{\x_0}.
    \end{align*}
    Using the upper bounds above can decompose~\eqref{eq:stoch:gap3_primal} as
    \begin{align*}
        &\frac{1}{2}\sum_{k=1}^t \Ex\sbr{\wP_k\p{\textstyle\sum_{I=1}^N \gamma_I \hTP_{k-1, I} - \TP_{k}}- a_k\mu\breg{\X}{\u}{\x_k}}\\
        &\leq \frac{1}{2}\sum_{\ell=0}^{t-1} \underbrace{\p{\sum_{k'=0}^{\infty}  \wP_{\ell + k' + 1} (A_{\ell + k'}\mu + \mu_0)  (1/N) (1-1/N)^{k'}}}_{\leq \wP_\ell (A_\ell\mu + \mu_0) + a_\ell\mu \text{ for } \ell \geq 1}\Ex[\breg{\X}{\u}{\x_{\ell}}] \\
        &\quad + \frac{1}{2}\p{\sum_{k'=0}^{\infty} \wP_{k'+1} (A_{\ell}\mu + \mu_0)  (1-1/N)^{k'+1}}\breg{\X}{\u}{\x_0}\\
        &\quad - \frac{1}{2}\sum_{\ell=0}^{t-1}\p{\wP_{\ell+1}(A_{\ell+1}\mu + \mu_0) + a_{\ell+1}\mu}\Ex[\breg{\X}{\u}{\x_{\ell+1}}]
    \end{align*}
     where the inequality under the braces follows from the theorem assumptions. By telescoping the resulting terms,~\eqref{eq:stoch:gap3_primal} is upper bounded by
    \begin{align*}
        \frac{1}{2} \p{\sum_{k'=0}^{\infty}  \wP_{k' + 1} (A_{k'}\mu + \mu_0) (1-1/N)^{k'}}\breg{\X}{\u}{\x_{0}},
    \end{align*}
    \edit{which is the leading constant of $\breg{\X}{\u}{\x_{0}}$ appearing in the statement.}

    \paragraph{3. Controlling~\eqref{eq:stoch:gap3_dual}:} This will follow from similar steps as those shown above, but will rely upon using the probabilistic arguments on each coordinate block of $\hyb_k$. Recall the notation $\breg{I}{\cdot}{\cdot}$ from \Cref{sec:preliminaries}. Using dual-separability of the objective, write
    \begin{align*}
        \Ex[\breg{\Y}{\v}{\hyb_{k}}] &= \sum_{I=1}^N \Ex[\breg{I}{\v_I}{\hyb_{k, I}}]\\
        &= \sum_{I=1}^N \p{(1/N) \Ex[\breg{I}{\v_I}{\y_{k, I}}] + (1-1/N)\Ex[\breg{I}{\v_I}{\hyb_{k-1, I}}]}\\
        &=  (1/N) \sum_{k'=0}^{k}(1-1/N)^{k'} \underbrace{\sum_{I=1}^N \Ex[\breg{I}{\v_I}{\y_{k-k', I}}]}_{\Ex[\breg{\Y}{\v}{\y_{k-k'}}]} + (1-1/N)^{k+1} \breg{\Y}{\v}{\y_{0}}.
    \end{align*}
    The remainder of the argument follows identically to Step 2 and produces the leading constant of $\breg{\Y}{\v}{\y_{0}}$ appearing in the statement.
\end{proof}
Note that the separability of $\breg{\Y}{\cdot}{\cdot}$ is only used in Step 3, which could be eschewed by updating the entirety of $\hyb_{k-1}$ with probability $1/N$ as opposed to the block-wise updates currently being used.
\edit{Furthermore, part of the work in \Cref{thm:stochastic:historic} is providing weight schedules $\br{(\wP_k, \wD_k)}_{k \geq 1}$ that control the constants $C_{\X}$ and $C_{\Y}$ on the initial distance terms. We may now prove \Cref{thm:stochastic:historic}.}
\begin{proof}
    We split the proof into the same case-by-case strategy as employed in \Cref{thm:nonsep:full}. While we may match those arguments exactly for most of the conditions of \Cref{prop:stochastic:historic}, we need to additionally set the correct values of the sequences $(\wP_k)_{k \geq 1}$ and $(\wD)_{k \geq 1}$ to complete the analysis. \edit{First, using that $A_k \leq \p{1+\frac{1}{2N}}^k a_1$ and that $\wP_k \leq 1/N$ for all $k \geq 1$ leads to the following bound on $C_{\X}$ from \Cref{prop:stochastic:historic}:}
    \begin{align*}
        2C_{\X} &= \mu_0 + \sum_{\ell=0}^{\infty} \wP_{\ell+1} (A_{\ell}\mu + \mu_0)  (1-1/N)^{\ell}\\
        &= \mu_0 + \mu\sum_{\ell=0}^{\infty} \wP_{\ell+1} A_{\ell}(1-1/N)^{\ell} + \mu_0 \sum_{\ell=0}^{\infty} \wP_{\ell+1}   (1-1/N)^{\ell}\\
        &= \mu_0 + \frac{a_1\mu}{N}\sum_{\ell=0}^{\infty} (1-1/(2N))^{\ell} + \frac{\mu_0}{N} \sum_{\ell=0}^{\infty} (1-1/N)^{\ell}\\
        &\leq 2\mu_0 + 2a_1\mu,
    \end{align*}
    and similarly, $C_{\Y} \leq a_1\nu + \nu_0$.
    The requirement that $A_k \leq \p{1+\frac{1}{2N}}^k a_1$ introduces a term of the form $N\ln\p{{G_0}/({a_1\epsilon})}$ to the iteration complexity, where $G_0 = (a_1\mu + \mu_0)\breg{\X}{\u}{\x_{0}} + (a_1\nu + \nu_0)\breg{\Y}{\v}{\y_{0}}$. 
    \paragraph{Case 1: $\mu > 0$, $\nu > 0$.} We consider here a constant choice of the sequences and $\wP_{k} = \wP_{0}$ and $\wD_{k} = \wD_{0}$. Then, all conditions on the growth of $(a_k)_{k \geq 1}$ can be satisfied using  $a_k  = \alpha A_{k-1}$, where
    \begin{align*}
        \alpha \lesssim \min\br{\frac{\sqrt{\wD_{0}\mu\nu}}{\Gb_{\pb}}, \frac{\sqrt{\wP_{0}\mu\nu}}{\Gb_{\qb, \bgam}}, \frac{\sqrt{\wP_{0}}\mu}{\Lb_{\pb, \bgam}}}.
    \end{align*}
    This implies that $A_k = (1+\alpha) A_{k-1}$. Then, considering~\eqref{eq:stoch:primal:decay}, we have that
    \begin{align*}
        \frac{1}{N}\sum_{k'=0}^{\infty}  \wP_{0} A_{\ell + k'}  (1-1/N)^{k'} =  \frac{1}{N} \wP_{0} A_{\ell} \sum_{k'=0}^{\infty} (1+\alpha)^{k'}  (1-1/N)^{k'}.
    \end{align*}
    Consider a setting of $\alpha$ and a constant $c > 0$ (which may depend on $N$) such that
    \begin{align}
        \frac{1}{N} \sum_{k'=0}^{\infty} (1+\alpha)^{k'}  (1-1/N)^{k'} \leq 1 + \frac{c\alpha}{1+\alpha} \label{eq:w_constant},
    \end{align}
    where the right-hand side can be tightened to $1 + c\alpha/2$ when $\alpha \leq 1$.
    Then, by setting $\wP_0 \leq 2c^{-1}$, the condition~\eqref{eq:stoch:primal:decay} is satisfied (with an identical argument holding for~\eqref{eq:stoch:dual:decay}). When $N = 1$, the term above vanishes, so we may consider $N \geq 2$. To satisfy~\eqref{eq:w_constant}, we need that $\alpha \leq \frac{N - c/2 - 1}{1-N} = \frac{1}{N-1}$ for $c = 2N$, which imposes the condition that $\alpha \leq \frac{1}{N-1}$.
    \edit{Thus, we may set $\wP_{k} = \wP_k = 1/N$ exactly.}
    \paragraph{Case 2: $\mu > 0$, $\nu = 0$.} 
    \edit{We set $\wD_k = 1/N$ for all $k \geq 0$ and need only set $\wP_k$. 
    }
    \edit{To approach this, we use the two-phase argument of Case 2 from \Cref{thm:nonsep:full}, and consider that for the constant sequence $\wP_k = \wP_0$ for all $k \geq 0$, $A_k$ will grow quadratically for all $k \geq k^\star$, where we recall that $k^\star$ is the last iteration for which the second condition of~\eqref{eq:stoch:cond} is dominant. }   
    In particular, for $\ell > k^\star$, we have $a_\ell = c\mu\nu_0 \min\br{\frac{\wD_{0}}{\Gb_{\pb}^2}, \frac{\wP_{0}}{\Gb_{\qb, \bgam}^2}}\ell$ for an absolute constant $c > 0$, and moreover, that
    \begin{align*}
        A_\ell \leq c\mu\nu_0 \min\br{\frac{\wD_0}{\Gb_{\pb}^2}, \frac{\wP_{0}}{\Gb_{\qb, \bgam}^2}} \frac{\ell(\ell+1)}{2}.
    \end{align*}
    Furthermore, the condition~\eqref{eq:stoch:primal:decay} is satisfied if
    \begin{align*}
        \frac{1}{N} \wP_{0}\sum_{k'=0}^{\infty}  \frac{(\ell + k')(\ell + 1 + k')}{2}  (1-1/N)^{k'} \leq \wP_{0} \frac{\ell(\ell+1)}{2} + \ell
    \end{align*}
    which in turn is satisfied when
    \begin{align*}
        \wP_{0}\underbrace{\frac{1}{N}\sum_{k'=0}^{\infty} \frac{k'(2\ell + k' + 1)}{2}  (1-1/N)^{k'}}_{\leq N\ell + N^2 + 1/2} \leq  \ell,
    \end{align*}
    where the upper bound follows by summing over $k'$ and applying $(1 - 1/N) \leq 1$. Set
    \begin{align*}
        \wP_{0} = \frac{1}{N + N^2 + 1/2},
    \end{align*}
    which does not depend on $k^\star$ and can be used to achieve the given complexity. \edit{For $\ell \leq k^\star$, we repeat the argument of Case 1 above, as $\wP_{0} = \frac{1}{N + N^2 + 1/2} \leq \frac{1}{N}$.}
    \paragraph{Case 3: $\mu = 0$, $\nu > 0$.} 
    Here, we may set $\wP_k = 1/N$ for all $k \geq 0$ and derive the required setting for $(\wD_k)_{k \geq 1}$. We may repeat the argument above to set $\wD_k \sim 1/N^2$ and follow Case 3 of \Cref{thm:nonsep:full}, which achieves the given complexity. 
    \paragraph{Case 4: $\mu = 0$, $\nu = 0$.} The conditions~\eqref{eq:stoch:primal:decay} and~\eqref{eq:stoch:dual:decay} vanish, so we reuse the sequence $A_t = O({\min\br{\sqrt{\mu_0\nu_0}/G, \mu_0/L} t})$ as before to complete the proof.
\end{proof}

\subsection{Strategy 2: Non-Uniform Block Replacement Probabilities }\label{sec:stochastic:minty}
\begin{algorithm}[t]
\setstretch{1.2}
   \caption{Non-Uniform Block Replacement Version of \Cref{algo:drago_v2} (Non-Separable)}
   \label{algo:stochastic:minty}
    \begin{algorithmic}[1]
   \State {\bfseries Input:} Initial point $(\x_0, \y_0)$, averaging sequence $(a_k)_{k=0}^t$, sampling vectors $(\pb, \qb, \rb, \boldsymbol{s})$.
   \State Initialize the comparison points $\hxb_{0, I} = \x_0$ for all $I \in [N]$ and $\hyb_0 = \y_0$. Initialize the table $\hfb_0 = \fb(\x_0)$ and $\hJb_0 = \grad \fb(\x_0)$, and compute the product $\hJb_{0}^\top \hyb_0$.
   \For {$k=1$ {\bfseries to} $t$}
       \State $\sub_1$: Compute $\g_{k-1}$ using $\pb$ and~\eqref{eq:primal_grad_est}.
       \State \State Perform the primal update:
       \begin{align}
           \x_k &= \argmin_{\x \in \X} \ a_k \ip{\g_{k-1}, \x} + a_k\phi(\x) + \tfrac{A_{k-1}\mu + \mu_0}{2}\breg{\X}{\x}{\x_{k-1}}.\notag
       \end{align}
       \State $\sub_2$: Draw $R_k \sim \rb$, and update $(\hxb_{k, I})_{I=1}^N$ via~\eqref{eq:table_elements}. Compute $\bfb_{k-1/2}$ using $\qb$ and~\eqref{eq:dual_grad_est}.
       \State Perform the dual update:
       \begin{align}
           \y_k &= \argmax_{\y \in \Y} \ a_k \ip{\y, \bfb_{k-1/2}} - a_k\psi(\y) - \tfrac{A_{k-1}\nu + \nu_{0}}{2}  \breg{\Y}{\y}{\y_{k-1}}. \notag
       \end{align}
       \State $\sub_3$: Draw $S_k \sim \boldsymbol{s}$ and update $\hyb_k$ via~\eqref{eq:table_elements}. Update the product $\hJb_{k}^\top \hyb_k$ via~\eqref{eq:g_replacement}.
   \EndFor
   \Return $A_t^{-1} \sum_{k=1}^t a_k(\x_k, \y_k)$.
\end{algorithmic}

\end{algorithm}

In the previous approach, we relied on the non-uniform weights $(\gamma_I)_{I=1}^N$ in order to achieve complexities that were independent of the number of blocks $N$. Here, rather than relying on the historical regularization, we will instead tune the sampling probabilities $\rb = (r_1, \ldots, r_N)$ and $\sbold = (s_1, \ldots, s_N)$, which govern the element $R_k$ of $\hxb_{k-1, 1}, \ldots, \hxb_{k-1, N}$ and which coordinate block $S_k$ of $\hyb_{k-1}$ gets updates at each iteration $k$ (see~\eqref{eq:table_elements}). \edit{The full algorithm is given in \Cref{algo:stochastic:minty}. Unlike the historical regularization strategy, we may set $\wP_k=\wD_k = 0$ for this technique. As a result, we bound the same terms as in \Cref{prop:stochastic:historic}, although~\eqref{eq:stoch:gap3_primal} and~\eqref{eq:stoch:gap3_dual} are immediately non-positive.

This method inherits the exact per-iteration complexity as the one analyzed in \Cref{thm:stochastic:historic}. Maintaining the matrix-vector multiplication $\hJb_{k}^\top \hyb_{k} \in \R^{d}$ operates just as in \Cref{algo:stochastic:historic}.
The total arithmetic complexity is given by $\tilde{O}(n(d+N)/N)$. The main difference in the algorithms from the upcoming \Cref{sec:sep}, from a per-iteration complexity viewpoint, is that the full $\tilde{O}(n)$ cost update of $\y_k$ is replaced by a single block update of cost $\tilde{O}(n/N)$. 
}

\edit{Proceeding to the iteration complexity}, the result will depend on the sampling probabilities $\pb$, $\qb$, $\rb$, and $\sbold$ through the following constants:
\begin{align*}
    \Gb_{\pb, \sbold} := \sqrt{\max_{I \in [N]}\frac{\Gb_I^2}{p_I s_I^2}}, \Lb_{\pb, \rb} := \sqrt{\sum_{I=1}^N \frac{\Lb_I^2}{p_Ir_I^2}}, \text{ and } \Gb_{\qb, \rb} := \sqrt{\sum_{I=1}^N \frac{\Gb_I^2}{q_I r_I^2}}.
\end{align*}
Observe the following.
\begin{thm}\label{thm:stochastic:minty}
    Under \Cref{asm:smoothness} and \Cref{asm:str_cvx}, consider any $\u \in \X$, $\v \in \Y$ and precision $\epsilon > 0$. Assume that $\min_I r_I \geq 1/(2N)$ and $\min_I s_I \geq 1/(2N)$.
    There exists a choice of the sequence $(a_k)_{k=1}^t$ such that \Cref{algo:drago_v2} with Identity Card 2 produces an output point $(\tx_t, \ty_t) \in \X \times \Y$ satisfying $\E{}{\gap^{\u, \v}(\tx_t, \ty_t)} \leq \epsilon$ for $t$ that depends on $\epsilon$ according to the following iteration complexities. 
    \edit{Below, we state $a_k$ and the iteration complexity in big-$O$ terms, and set $\alpha \sim \min\br{{\sqrt{\mu\nu}}/{(\Gb_{\pb, \sbold} \vee  \Gb_{\qb, \rb})}, {\mu}/{\Lb_{\pb, \rb}}, 1/N}$.}

    \begin{center}
    \begin{adjustbox}{max width=\linewidth}
    \begin{tabular}{ccc}
    \toprule
        {\bf Case} & \edit{{\bf $a_k$ }} & {\bf Iteration Complexity}\\
        \midrule
        $\mu > 0$, $\nu > 0$
        &
        \edit{$(1+\alpha)^k$}
        &
        $\p{N + \frac{\Lb_{\pb, \rb}}{\mu} + \frac{\Gb_{\pb, \sbold} \vee \Gb_{\qb, \rb}}{\sqrt{\mu\nu}}}\ln\p{\frac{1}{\epsilon}}$
        \\
        $\mu > 0$ and $\nu = 0$
        &
        \edit{$k$}
        &
        $\p{N + \frac{\Lb_{\pb, \rb}}{\mu}}\ln\p{\frac{1}{\epsilon}} + (\Gb_{\pb, \sbold} \vee \Gb_{\qb, \rb})\sqrt{\frac{\sqrt{\mu_0/\nu_0}D_0}{\mu \epsilon}}$
        \\
        $\mu = 0$, $\nu > 0$
        &
        \edit{$1$}
        &
        $N\ln\p{\frac{1}{\epsilon}} + \frac{\Lb_{\pb, \rb}\sqrt{\nu_0/\mu_0}D_0}{\epsilon} + (\Gb_{\pb, \sbold} \vee \Gb_{\qb, \rb})\sqrt{\frac{\sqrt{\nu_0/\mu_0}D_0}{\nu \epsilon}}$
        \\
        $\mu = 0$, $\nu = 0$
        &
        \edit{$1$}
        &
        $N\ln\p{\frac{1}{\epsilon}} + \frac{\p{\Lb_{\pb, \rb}\sqrt{\nu_0/\mu_0}  + (\Gb_{\pb, \sbold} \vee \Gb_{\qb, \rb})}D_0}{\epsilon}$\\
    \bottomrule
    \end{tabular}
    \end{adjustbox}
    \end{center}
\end{thm}

We provide similar computations as those used in \Cref{thm:stochastic:historic} to uncover the dependence on the sampling scheme. We again use $\lambb = (\lamb_1, \ldots, \lamb_N)$ where $\lamb_I := \sqrt{\Gb_I^2 + \Lb_I^2}$ from \Cref{sec:preliminaries}. The non-uniform sampling complexity given below follows by letting $p_I \propto \lamb_I^{1/2}$, $r_I \propto \lamb_I^{1/2}$, $s_I \propto \Gb_I^{1/2}$ and $q_I \propto \Gb_I^{1/2}$.
\begin{center}
\begin{adjustbox}{max width=\linewidth}
\begin{tabular}{c|cc}
\toprule
    {\bf Constant} & {\bf Uniform Sampling} & {\bf Non-Uniform Sampling}\\
    \midrule
    $\Gb_{\pb, \sbold} \vee \Gb_{\qb, \rb} $ & $N^{3/2} \norm{\Gbb}_2$ & $\norm{\lambb}_{1/2}^{1/2} \norm{\Gbb}_{1/2}^{1/2}$\\
    $\Lb_{\pb, \rb}$ & $N^{3/2}\norm{\Lbb}_2$ & $\norm{\lambb}_{1/2}^{3/4}  \norm{\Lbb}_{1/2}^{1/4}$\\
    \bottomrule
\end{tabular}
\end{adjustbox}
\end{center}
Notice that the complexities in \Cref{thm:stochastic:historic} (as opposed to the ones shown in \Cref{thm:stochastic:minty}) depend on additional factors in $N$. Thus, taking $\mu > 0$ and $\nu > 0$ as an example, in the case of uniform sampling, the method of \Cref{thm:stochastic:historic} has an $N^{3/2}(\norm{\Gbb}_\infty + \norm{\Lbb}_\infty)$ dependence on problem constants, whereas \Cref{thm:stochastic:minty} has an $N^{3/2}(\norm{\Gbb}_2 + \norm{\Lbb}_2)$ dependence. These are the same in the case of highly non-uniform Lipschitz constants, but have a $\sqrt{N}$ factor difference for approximately uniform Lipschitz constants. \edit{In \Cref{prop:stochastic:minty}, we derive the conditions on the growth of $(a_k)_{k \geq 1}$ that leads to the choices in \Cref{thm:stochastic:historic}.}

\begin{prop}\label[proposition]{prop:stochastic:minty}
    Let $(\x_0, \y_0) \in \ri(\dom(\phi)) \times \ri(\dom(\psi))$ and $\br{(\x_k, \y_k)}_{k\geq 1}$ be generated using $\g_{k-1}$ and $\bfb_{k-1/2}$ given by~\eqref{eq:primal_grad_est} and~\eqref{eq:dual_grad_est}, respectively. 
    Define $a_1 = \frac{1}{15}\min\br{\frac{\sqrt{\mu_0 \nu_0}}{\Gb_{\pb, \sbold} \vee \Gb_{\qb, \rb}}, \frac{\mu_0}{\Lb_{\pb, \rb}}}$ and select $(a_k)_{k \geq 2}$ such that both the conditions
    \begin{align}
        \frac{a_{k}^2}{A_k\mu + \mu_0} &\leq \min_I (1 + (s_I \wedge r_I)/5)\frac{a_{k-1}^2}{A_{k-1}\mu + \mu_0} \label{eq:stoch:N_mu}\\
        \frac{a_{k}^2}{A_k\nu + \nu_0} &\leq \min_I (1 + r_I/5)\frac{a_{k-1}^2}{A_{k-1}\nu + \nu_0} \label{eq:stoch:N_nu}
    \end{align}
    and 
    \begin{align}
        a_{k} \leq \min\br{\frac{\sqrt{(A_k\mu + \mu_0)(A_{k-1}\nu + \nu_0)}}{30\Gb_{\pb, \sbold}}, \frac{\sqrt{(A_k\mu + \mu_0)(A_{k-1}\mu + \mu_0)}}{40\Lb_{\pb, \rb}}, \frac{\sqrt{(A_k\nu + \nu_0)(A_{k-1}\mu + \mu_0)}}{30\Gb_{\qb, \rb}}}\label{eq:stoch:rate}
    \end{align}
    are satisfied. We have that for any $\u \in \X$ and $\v \in \Y$, 
    \begin{align*}
        \sum_{k=1}^t a_k \Ex[\gap^{\u, \v}(\x_k, \y_k)] + \frac{1}{4}\Ex[\TP_t] + \frac{1}{4}\Ex[\TD_t]
        &\leq \frac{1}{2}\TP_{0} + \frac{1}{2}\TD_{0}.
    \end{align*}
\end{prop}
\begin{proof}
    As stated before, we aim to show that the sum of terms in~\eqref{eq:stoch:gap2_cancellation} is upper bounded by a constant independent of $t$. This is composed of the terms $\Ex[\IP_k]$ and $\Ex[\ID_k]$. We divide this task into bounding the primal and dual components separately. As before, by applying the argument leading to the expressions~\eqref{eq:stoch:error_primal} and~\eqref{eq:stoch:error_dual}, bounding the inner product terms $\Ex[\IP_k]$ and $\Ex[\ID_k]$ reduces to bounding the error terms $\Ex[\EP_k]$ and $\Ex[\ED_k]$ and the final element of the telescoping inner product terms $a_t \ip{\grad \fb(\x_t)^\top \y_t - \hJb_{t-1}^\top \hyb_{t-1}, \u - \x_t}$ and $a_t \langle\grad \fb(\x_t) - \hfb_{t}, \v - \y_t\rangle$.

    \paragraph{1. Controlling $\Ex[\IP_k]$:} The first step follows similarly to Step 1 from the proof of \Cref{prop:stochastic:historic}. 
    By Young's inequality with parameter $(A_{k-1}\mu + \mu_0)/4$, we have
    \begin{align}
        \Ex[\EP_k] &= a_{k-1} \Ex\ip{\tfrac{1}{p_{P_k}}\textstyle\sum_{i \in B_{P_k}} (y_{k-1, i}\grad f_{i}(\x_{k-1}) - \hy_{k-2, i} \hgb_{k-2, i}),  \x_{k-1} - \x_{k}} \notag \\
        &\leq \frac{1}{4} \Ex[\CP_k]+ \frac{2a_{k-1}^2}{A_{k-1}\mu + \mu_0} \Ex\norm{\tfrac{1}{p_{P_k}}\textstyle\sum_{i \in B_{P_k}} (y_{k-1, i}\grad f_{i}(\x_{k-1}) - \hy_{k-2, i} \hgb_{k-2, i})}_{\X^*}^2 \label{eq:stoch:minty:primal:middle}
    \end{align}
    and
    \begin{align}
        &-\Ex[a_t \langle \grad \fb(\x_t)^\top \y_t - \hJb_{t-1}^\top \hyb_{t-1}, \u - \x_t\rangle] \notag\\
        &\quad \leq \frac{1}{4}\Ex[\TP_{t}] + \frac{2a_{t-1}^2}{A_{t-1}\mu + \mu_0} \Ex\norm{\tfrac{1}{p_{P_{t+1}}}\textstyle\sum_{i \in B_{P_{t+1}}} (y_{t, i}\grad f_{i}(\x_{t}) - \hy_{t-1, i} \hgb_{t-1, i})}_{\X^*}^2. \label{eq:stoch:minty:primal:last}
    \end{align}    
    Then, we apply \Cref{lem:separation} with $b_I = \Lb_I^2/p_I$ and $c_I = \Gb_I^2/p_I$ to handle the second term in~\eqref{eq:stoch:minty:primal:middle} (and~\eqref{eq:stoch:minty:primal:last}), and write
    \begin{align*}
         &\Ex\norm{\tfrac{1}{p_{P_k}}\textstyle\sum_{i \in B_{P_k}} (y_{k-1, i}\grad f_{i}(\x_{k-1}) - \hy_{k-2, i} \hgb_{k-2, i})}_{\X^*}^2\\
         &\leq  2\sum_{I=1}^N \frac{\Lb_I^2}{p_I}\red{\Ex\norm{\x_{k-1} - \hxb_{k-2, I}}_{\X}^2} + 2\sum_{I=1}^N \frac{\Gb_I^2}{p_I} \blue{\Ex\norm{\y_{k-1, I} - \hyb_{k-2,I}}_{2}^2}.
    \end{align*}
    
    Individually, the colored table bias terms can be further upper bounded by applying \Cref{lem:table_updates} (stated after this proof), so that the sums of~\eqref{eq:stoch:minty:primal:middle} and~\eqref{eq:stoch:minty:primal:last} can be further developed to
    \begin{align}
        &\sum_{k=2}^t \Ex[\IP_k] \leq \frac{1}{4} \sum_{k=2}^t \Ex[\CP_k] + \frac{1}{4}\Ex[\TP_{t}]\notag\\
        &\quad + 20\sum_{k=1}^{t} \frac{a_{k}^2}{A_{k}\mu + \mu_0} \sum_{I=1}^N \frac{\Gb_I^2}{p_I \red{s_I}} \red{\sum_{k'=1}^k (1-s_I/2)^{k-k'}\Ex\norm{\y_{k', I} - \y_{k'-1, I}}_2^2} \label{eq:geom_dual}\\
        &\quad + 20\sum_{k=1}^{t} \frac{a_{k}^2}{A_{k}\mu + \mu_0} \sum_{I=1}^N \frac{\Lb_I^2}{p_I\blue{r_I}} \blue{\sum_{k'=1}^k (1-r_I/2)^{k-k'}\Ex\norm{\x_{k'} - \x_{k'-1}}_\X^2}. \label{eq:geom_primal}
    \end{align}
    To control the resulting sums~\eqref{eq:geom_dual} and~\eqref{eq:geom_primal}, we exchange the order of summation to compute them. First,~\eqref{eq:geom_dual} can be written as 
    \begin{align}
        &20\sum_{k'=1}^{t}\sum_{I=1}^N \frac{\Gb_I^2}{p_Is_I} \Ex\norm{\y_{k', I} - \y_{k'-1, I}}_2^2 \cdot \sum_{k = k'}^t \frac{a_{k}^2}{A_{k}\mu + \mu_0}  (1-s_I/2)^{k-k'} \notag\\
        &\leq 100\sum_{k'=1}^{t} \frac{a_{k'}^2}{A_{k'}\mu + \mu_0} \sum_{I=1}^N \frac{\Gb_I^2}{p_Is_I^2}  \Ex\norm{\y_{k', I} - \y_{k'-1, I}}_2^2 \label{eq:geom_dual1}\\
        &\leq 100\underbrace{\p{\max_{I \in [N]}\frac{\Gb_I^2}{p_I s_I^2}}}_{\Gb_{\pb, \sbold}^2}\sum_{k'=1}^{t} \frac{a_{k'}^2}{A_{k'}\mu + \mu_0} \Ex\norm{\y_{k'} - \y_{k'-1}}_{\Y}^2,  \label{eq:stoch:ep1}
    \end{align}
    where the inequality~\eqref{eq:geom_dual1} follows by the given assumption~\eqref{eq:stoch:N_mu} that $\frac{a_{k}^2}{A_{k}\mu + \mu_0} \leq \min_I \p{1 + s_I/5}\frac{a_{k-1}^2}{A_{k-1}\mu + \mu_0}$ and the sequence of steps
    \begin{align*}
          \sum_{k=k'}^t \frac{a_{k}^2}{A_{k}\mu + \mu_0} (1-s_I/2)^{k-k'} &\leq  \sum_{k=k'}^t [(1-s_I/2)(1+s_I/5)]^{k-k'} \frac{a_{k'}^2}{A_{k'}\mu + \mu_0}\\
        &\leq   \sum_{k=k'}^t (1-s_I/5)^{k-k'} \frac{a_{k'}^2}{A_{k'}\mu + \mu_0} \leq \frac{5}{s_I} \frac{a_{k'}^2}{A_{k'}\mu + \mu_0}.
    \end{align*}
    In~\eqref{eq:stoch:ep1}, we also used that $\norm{\cdot}_2 \leq \norm{\cdot}_\Y$. This argument is the most technical part of the analysis and is repeated two more times in the remainder of the proof. The first of the two is to upper bound~\eqref{eq:geom_primal} by the quantity
    \begin{align}
        100\underbrace{\p{\sum_{I=1}^N \frac{\Lb_I^2}{p_Ir_I^2}}}_{\Lb_{\pb, \rb}^2} \sum_{k'=1}^{t} \frac{a_{k'}^2}{A_{k'}\mu + \mu_0}  \Ex\norm{\x_{k'} - \x_{k'-1}}_\X^2, \label{eq:stoch:ep2}
    \end{align}
    whereas the second appears in the steps used to bound the dual error terms below. 
    
    \paragraph{2. Controlling $\Ex[\ID_k]$:}
    The following steps are the dual analog of the ones shown above for the primal terms. First, we have the decomposition of the inner product term
    \begin{align}
        \Ex_{k-1/2}[\ID_k] &= a_k \Ex_{k-1/2}\ip{\fb(\x_k) - \hfb_{k}, \v - \y_k}\notag\\
        &\quad- a_{k-1} \ip{\fb(\x_{k-1}) - \hfb_{k-1}, \v - \y_{k-1}} - \Ex_{k-1/2}[\ED_k].\notag
    \end{align}
    Applying Young's inequality with parameter $(A_{k-1}\nu + \nu_0)/4$ we have that
    \begin{align}
    \Ex_{k-1/2}[\ED_k] &= a_{k-1} \Ex_{k-1/2}\ip{\tfrac{1}{q_{Q_k}}\textstyle\sum_{j \in B_{Q_k}} (f_{j}(\x_{k-1}) - \hf_{k-1, j})\e_j ,  \y_k - \y_{k-1}} \notag\\
        &\leq \frac{1}{4}\Ex_{k-1/2}[\CD_k] + \frac{2a_{k-1}^2}{A_{k-1} \nu + \nu_0} \sum_{J=1}^N \frac{1}{q_J}\norm{\textstyle\sum_{j \in B_{J}} \p{f_j(\x_{k-1}) - \hf_{k-1, j}}\e_j}_{\Y^*}^2 \notag\\
        &\leq \frac{1}{4}\Ex_{k-1/2}[\CD_k] + \frac{2a_{k-1}^2}{A_{k-1} \nu + \nu_0} \sum_{J=1}^N \frac{1}{q_J}\norm{\textstyle\sum_{j \in B_{J}} \p{f_j(\x_{k-1}) - \hf_{k-1, j}}\e_j}_{2}^2 \label{eq:dual_norm_bound}\\
        &\leq \frac{1}{4}\Ex_{k-1/2}[\CD_k] + \frac{2a_{k-1}^2}{A_{k-1} \nu + \nu_0} \sum_{J=1}^N \frac{\Gb_J^2}{q_J} \norm{\x_{k-1} - \hxb_{k-1, J}}_{\X}^2 \notag
    \end{align}
    where in~\eqref{eq:dual_norm_bound} we used that $\norm{\cdot}_{\Y^*} \leq \norm{\cdot}_{2}$. Summing over $k$ and taking the marginal expectation, 
    \begin{align}
        \sum_{k=2}^{t} \Ex[\ID_k] \leq \frac{1}{4} \sum_{k=2}^t \Ex[\CD_k] + \frac{1}{4}\Ex[\TD_{t}]+  \frac{2a_{k-1}^2}{A_{k-1} \nu + \nu_0} \sum_{J=1}^N \frac{\Gb_J^2}{q_J} \norm{\x_{k-1} - \hxb_{k-1, J}}_{\X}^2,\label{eq:geom_primal1}
    \end{align}
    where~\eqref{eq:geom_primal1} can be upper bounded using the same arguments leading to~\eqref{eq:stoch:ep2} under the given assumption~\eqref{eq:stoch:N_nu}, yielding 
    \begin{align}
        50\underbrace{\p{\sum_{J=1}^N \frac{\Gb_J^2}{q_J r_J^2}}}_{\Gb_{\qb, \rb}^2} \sum_{k'=1}^{t} \frac{a_{k'}^2}{A_{k'}\nu + \nu_0}  \Ex\norm{\x_{k'} - \x_{k'-1}}_\X^2. \label{eq:stoch:ed}
    \end{align}
    To summarize progress thus far, we must cancel the terms~\eqref{eq:stoch:ep1},~\eqref{eq:stoch:ep2}, and~\eqref{eq:stoch:ed} to complete the proof, which requires setting the appropriate conditions on the sequence $(a_k)_{k\geq 1}$.

    \paragraph{3. Deriving the rate conditions:}
    Under the condition~\eqref{eq:stoch:rate}, we may bound~\eqref{eq:stoch:ep1} by $\frac{1}{4}\sum_{k'=1}^t \Ex[\CD_{k'}]$,~\eqref{eq:stoch:ep2} by $\frac{1}{8}\sum_{k'=1}^t \Ex[\CP_{k'}]$, and~\eqref{eq:stoch:ed} by $\frac{1}{8}\sum_{k'=1}^t \Ex[\CP_{k'}]$. All terms of~\eqref{eq:stoch:gap2_cancellation} now cancel, completing the proof.
\end{proof}

The following technical lemma was used to express the terms that quantified the table bias terms (distance between the iterates and their counterparts in the respective tables) with terms that can be canceled by quantities in~\eqref{eq:stoch:gap2_cancellation}.
\begin{lemma}{\citep[Lemma 2]{diakonikolas2025block}}\label[lemma]{lem:table_updates}
    For any $k \geq 1$ and $I \in [N]$, the following hold:
    \begin{align*}
        \Ex\norm{\x_k - \hxb_{k-1, I}}_\X^2 &\leq \frac{5}{r_I} \sum_{k'=1}^k (1-r_I/2)^{k-k'}\Ex\norm{\x_{k'} - \x_{k'-1}}_\X^2,\\
        \Ex\norm{\y_{k, I} - \hyb_{k-1, I}}_2^2 &\leq \frac{5}{s_I} \sum_{k'=1}^k (1-s_I/2)^{k-k'}\Ex\norm{\y_{k', I} - \y_{k'-1, I}}_2^2.
    \end{align*}
\end{lemma}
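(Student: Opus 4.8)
The plan is to prove both inequalities by the same one-step recursion, presenting the primal bound in detail; the dual one is identical after replacing $(\x_k,\hxb_{k-1,I},r_I,R_k)$ by $(\y_{k,I},\hyb_{k-1,I},s_I,S_k)$. Fix $I$, write $p:=r_I$ and $\delta_\ell:=\x_\ell-\x_{\ell-1}$, and set $U_m:=\Ex\norm{\x_{m+1}-\hxb_{m,I}}_\X^2$, so that the claim at index $k$ reads $U_{k-1}\le\tfrac{5}{p}\sum_{k'=1}^{k}(1-p/2)^{k-k'}\Ex\norm{\delta_{k'}}_\X^2$. If $p=1$ the block is refreshed at every step, $\hxb_{k-1,I}=\x_{k-1}$, and the bound is immediate; so I may assume $p\in(0,1)$ (the degenerate case $p=0$ makes the right-hand side $+\infty$). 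Since $\hxb_{0,I}=\x_0$, the base case is $U_0=\Ex\norm{\delta_1}_\X^2$.

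For the recursion, recall from~\eqref{eq:table_elements} that $\hxb_{m,I}=\x_m$ on $\br{R_m=I}$ and $\hxb_{m,I}=\hxb_{m-1,I}$ otherwise, so that
\[
\x_{m+1}-\hxb_{m,I}=\delta_{m+1}\ \text{ on }\br{R_m=I},\qquad \x_{m+1}-\hxb_{m,I}=\delta_{m+1}+(\x_m-\hxb_{m-1,I})\ \text{ on }\br{R_m\neq I}.
\]
Applying Young's inequality with parameter $\lambda:=\tfrac{p}{2(1-p)}$ on the second event and using $1+1/\lambda=\tfrac{2-p}{p}\le\tfrac2p$ together with $\mathbbm{1}[R_m=I]\le 1+1/\lambda$, I obtain the pointwise estimate
\[
\norm{\x_{m+1}-\hxb_{m,I}}_\X^2\ \le\ \tfrac{2}{p}\,\norm{\delta_{m+1}}_\X^2\ +\ (1+\lambda)\,\mathbbm{1}[R_m\neq I]\,\norm{\x_m-\hxb_{m-1,I}}_\X^2 .
\]

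The one genuinely delicate point is taking expectations of the last term. The vector $\x_m-\hxb_{m-1,I}$ is $\F_{m-1/2}$-measurable — $\x_m$ is, and $\hxb_{m-1,I}$ is a deterministic function of $\x_0,\dots,\x_{m-1}$ and $R_1,\dots,R_{m-1}$ — whereas $R_m$ is sampled independently of $\F_{m-1/2}$; conditioning on $\F_{m-1/2}$ therefore gives $\Ex\big[\mathbbm{1}[R_m\neq I]\,\norm{\x_m-\hxb_{m-1,I}}_\X^2\big]=(1-p)\,U_{m-1}$. (In the dual case the corresponding vector $\y_{m,I}-\hyb_{m-1,I}$ is only $\F_m$-measurable, so one conditions instead on $\mc{G}_m:=\sigma(\F_{m-1},P_m,Q_m,R_m)$ — all of $\F_m$ except $S_m$ — with respect to which $S_m$ is independent while $\y_{m,I}-\hyb_{m-1,I}$ remains measurable.) Taking expectations and using $(1+\lambda)(1-p)=1-p/2$ yields
\[
U_m\ \le\ \tfrac{2}{p}\,\Ex\norm{\delta_{m+1}}_\X^2\ +\ \big(1-\tfrac{p}{2}\big)U_{m-1},
\]
and unrolling this geometric recurrence from $U_0=\Ex\norm{\delta_1}_\X^2\le\tfrac{2}{p}\Ex\norm{\delta_1}_\X^2$ gives $U_{k-1}\le\tfrac{2}{p}\sum_{k'=1}^{k}(1-p/2)^{k-k'}\Ex\norm{\delta_{k'}}_\X^2$, which is even stronger than the stated bound (constant $2/p$ rather than $5/p$); the slack in $5/r_I$ accommodates the slightly different bookkeeping in the cited source. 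The main obstacle is thus not any computation but the measurability accounting in the previous paragraph — in particular choosing the right sub-$\sigma$-algebra for the dual bound, where $\y_m$ depends on the dual sampling index $Q_m$ and is not $\F_{m-1/2}$-measurable; the tuning of $\lambda$ and the geometric summation are routine.
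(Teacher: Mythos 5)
Your proof is correct. Note that the paper does not prove this lemma at all — it imports it verbatim as Lemma 2 of \citet{diakonikolas2025block} — so there is no in-paper argument to compare against; your one-step recursion (Young's inequality with $\lambda = r_I/(2(1-r_I))$ so that $(1+\lambda)(1-r_I) = 1-r_I/2$, independence of the replacement index from the sub-$\sigma$-algebra carrying $\x_m - \hxb_{m-1,I}$, then geometric unrolling) is exactly the standard route for such table-bias bounds, and your measurability accounting for the dual case (conditioning on everything except $S_m$, which $\y_m$ does not depend on under the update~\eqref{eq:dual_update}) is the right fix. You in fact obtain the sharper constant $2/r_I$ in place of $5/r_I$, which of course implies the stated bound.
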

We now convert the \Cref{prop:stochastic:minty} into a complexity guarantee, \edit{by proving \Cref{thm:stochastic:minty}.}
\begin{proof}
    The exact case-by-case strategy of \Cref{thm:nonsep:full} can be applied (ignoring absolute constant factors) by setting $G \leftarrow \Gb_{\pb, \sbold} \vee \Gb_{\qb, \rb}$ and $L \leftarrow \Lb_{\pb, \rb}$. The only additional conditions that need to be incorporated are~\eqref{eq:stoch:N_mu} and~\eqref{eq:stoch:N_nu}. Under the given assumption that $\min_I r_I \geq 1/(2N)$ and $\min_I s_I \geq 1/(2N)$, these conditions will be satisfied when, for all $k \geq 2$, it holds that $\frac{a_{k}^2}{A_k\mu + \mu_0} \leq \p{1 + 1/(10N)}\frac{a_{k-1}^2}{A_{k-1}\mu + \mu_0}$ and $\frac{a_{k}^2}{A_k\nu + \nu_0} \leq \p{1 + 1/(10N)}\frac{a_{k-1}^2}{A_{k-1}\nu + \nu_0}$.
    Taking the first condition as an example, it can be rewritten as
    \begin{align*}
        \frac{a_k^2}{a_{k-1}^2} \leq \p{1 + \frac{1}{10N}} \frac{A_k\mu + \mu_0}{A_{k-1}\mu + \mu_0}.
    \end{align*}
    When $\mu = 0$ and $\nu = 0$, this condition is satisfied automatically as the ratio on the left-hand side is equal to $1$ (because $a_k$ is a constant sequence). Otherwise, because $A_k$ is an increasing sequence, we can reduce the condition to $a_k^2 \leq (1+ 1/(10N)) a_{k-1}^2$. The fastest growth condition on $(a_k)_{k \geq 1}$ that is possible under the constraint is $a_k \leq (1+\alpha)a_{k-1}$, where $(1+\alpha) \leq \sqrt{1 + 1/(10N)}$. Then,
    \begin{align*}
        \sqrt{1 + 1/(10N)} \geq \sqrt{1 + 69/(900N)} \geq 1+1/(30N),
    \end{align*}
    so the imposition $\alpha \leq \frac{1}{30N}$ suffices. This adds an $O\p{N\ln\p{{1}/{\epsilon}}}$
    term to the resulting complexities and completes the proof.
\end{proof}

\section{An Algorithm for Dual-Separable Problems}\label{sec:sep}
Given \Cref{def:separable}, we design an algorithm variant that performs stochastic block-wise updates in the dual variable, akin to similar strategies applied to bilinearly coupled objectives \citep{song2021variance}. Precisely, we will only update a single randomly chosen block $Q_k$ on each iteration $k$, in an effort to achieve an improved complexity guarantee. Updates in this form introduce additional technical challenges because different blocks of the dual iterate $\y_k = (\y_{k, 1}, \ldots, \y_{k, N})$ have different dependences on the block index $Q_k$. As such, we carefully handle the expectations computations in the upcoming \Cref{lem:expect}. Furthermore, a key difference in the proof structure of this section is that we will track an auxiliary sequence $(\byb_k)_{k\geq 1}$ of \emph{return values}, such that the algorithm returns $(\x_t, \byb_t)$ in the final iteration instead of $(\x_t, \y_t)$. \edit{This sequence is determined by taking the full update of the dual variable on iteration $k$ given the result of iteration $k - 1$ and $\wD_k = 0$, with a slight modification to handle the block coordinate-wise nature of the updates. Precisely, we consider}
\begin{align}
    \byb_k = \argmax_{\y \in \Y} \Big\{ &a_k (\ip{\y, N\bfb_{k-1/2} - (N-1)\grad \psi(\y_{k-1})} \notag\\
    &\quad - \psi(\y)) - (A_{k-1}\nu + \nu_{0})\breg{\Y}{\y}{\y_{k-1}} \Big\},\label{eq:full_dual_update_sep}
\end{align}
\edit{where $\grad \psi(\y) := \p{\grad\psi_1(\y_1), \ldots, \grad\psi_N(\y_N)}$ is an arbitrary but consistently chosen subgradient of $\psi$. }
Conceptually, each block $\byb_{k, J}$ represents the $J$-th block of $\y_k$ if $J = Q_k$, or if block $J$ was the one updated at time $k$. In other words, it stores all possible block updates that could have occurred from step $k-1$ to step $k$ in one vector. 
\edit{The actual update used in the algorithm (which considers only block $Q_k$), is described in the upper bound \Cref{lem:sep:upper}.}
Similar to before, we will define our update sequences in the process of deriving upper and lower bounds on $a_k\Ex[\Lcal(\x_k, \v)]$ and $a_k\Ex[\Lcal(\u, \byb_k)]$.

Crucially, we do not need to compute the elements of the sequence $(\byb_k)_{k\geq 1}$, as doing so would defeat the purpose of considering coordinate-wise updates. 
Instead, %
we may realize~\eqref{eq:gap_bound} in expectation by computing only one instance of $\byb_k$ with the following trick \citep{alacaoglu2022complexity}: we randomly draw an index $\hat{t}$ (independent of all other randomness in the algorithm and of $(\u, \v)$) from $\{1, \ldots, t\}$, where $\hat{t} = k$ with probability $a_k/A_t$. Thus, by computing the conditional expectation over $\hat{t}$ given the sequence of iterates,
\begin{align*}
    \Ex[\gap^{\u, \v}(\x_{\hat{t}}, \byb_{\hat{t}}) | \mc{F}_t] = \sum_{k=1}^t a_k \gap^{\u, \v}(\x_k, \byb_k).
\end{align*}
In practice, we may simply run the algorithm to iteration $\hat{t}$. \edit{As before, we produce two algorithms, based on the ideas of historical regularization (\Cref{sec:sep:historic}) and non-uniform block replacement (\Cref{sec:sep:minty}). One difference between \Cref{sec:sep} and \Cref{sec:nonsep:stochastic} is that we will modify the dual update to modify a single coordinate block, and therefore will not consider the parameter $\wD_k$. We will provide a new lower and upper bound on objective that will replace \Cref{claim:gap} as our initial primal-dual gap bound. As before, we collect results that are common to both algorithms below, and then complete the analysis for either algorithm in \Cref{sec:sep:historic} and \Cref{sec:sep:minty}, respectively.}

First, we use the following technical lemma to provide expectation formulas regarding $\y_k$ and $\byb_k$, \edit{which will be used in the upper bound that is analogous to \Cref{lem:upper}.}
\begin{lemma}\label[lemma]{lem:expect}
    Let $h: \Y \times \X \to \R$ be block separable in its first argument, i.e., $h(\y, \x) = \sum_{J=1}^N h_J(\y_{J}, \x)$ for $J \in [N]$ and $\y \in \Y$. Assume that $\y_{k, J} = \y_{k-1, J}$ for all $J \neq Q_k$, that $\byb_{k, Q_k} = \y_{k, Q_k}$, and that $\byb_{k}$ is $\F_{k-1/2}$-measurable. Then, if $Q_k$ is sampled uniformly on $[N]$, it holds that
    \begin{align}
        N\E{}{h_{Q_k}(\y_{k, Q_k}, \x_k)} = N\E{}{h(\y_k, \x_k)} - (N-1) \E{}{h(\y_{k-1}, \x_k)} = \E{}{h(\byb_k, \x_k)}. \label{eq:y_bar_expt}
    \end{align}
\end{lemma}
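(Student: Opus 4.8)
\medskip

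The plan is to compute the two claimed equalities by conditioning on $\F_{k-1/2}$ and taking expectations over the single random block index $Q_k$, which is uniform on $[N]$ and independent of everything that is $\F_{k-1/2}$-measurable. First I would observe that since $\x_k$ and $\byb_k$ are both $\F_{k-1/2}$-measurable, and $\y_{k-1}$ is certainly $\F_{k-1/2}$-measurable, the only randomness left when conditioning on $\F_{k-1/2}$ is the draw of $Q_k$. The core identity to exploit is that, given $\F_{k-1/2}$,
\[
  \y_{k,J} = \begin{cases} \byb_{k,J} & \text{if } J = Q_k,\\ \y_{k-1,J} & \text{if } J \neq Q_k,\end{cases}
\]
which holds because by hypothesis $\y_{k,J} = \y_{k-1,J}$ for $J \neq Q_k$ and $\byb_{k,Q_k} = \y_{k,Q_k}$. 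This means each realization of $\y_k$ agrees with $\byb_k$ in exactly the coordinate block $Q_k$ and with $\y_{k-1}$ everywhere else.

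\medskip

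Next I would expand $\E{k-1/2}{h(\y_k,\x_k)}$ using block separability of $h$ in its first argument. Writing $h(\y_k,\x_k) = \sum_{J=1}^N h_J(\y_{k,J},\x_k)$ and substituting the case split above, for each fixed block $J$ the summand $h_J(\y_{k,J},\x_k)$ equals $h_J(\byb_{k,J},\x_k)$ on the event $\{Q_k = J\}$ (probability $1/N$) and equals $h_J(\y_{k-1,J},\x_k)$ on the complement (probability $1 - 1/N$). Hence
\[
  \E{k-1/2}{h(\y_k,\x_k)} = \sum_{J=1}^N \Bigl(\tfrac{1}{N} h_J(\byb_{k,J},\x_k) + \bigl(1-\tfrac{1}{N}\bigr) h_J(\y_{k-1,J},\x_k)\Bigr) = \tfrac{1}{N} h(\byb_k,\x_k) + \tfrac{N-1}{N} h(\y_{k-1},\x_k).
\]
Rearranging gives $N\,\E{k-1/2}{h(\y_k,\x_k)} - (N-1) h(\y_{k-1},\x_k) = h(\byb_k,\x_k)$, and taking the full marginal expectation yields the second equality in \eqref{eq:y_bar_expt}. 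For the first equality, I would note that $h_{Q_k}(\y_{k,Q_k},\x_k) = h_{Q_k}(\byb_{k,Q_k},\x_k)$ (since $\byb_{k,Q_k} = \y_{k,Q_k}$) and compute $\E{k-1/2}{h_{Q_k}(\byb_{k,Q_k},\x_k)} = \sum_{J=1}^N \tfrac{1}{N} h_J(\byb_{k,J},\x_k) = \tfrac{1}{N} h(\byb_k,\x_k)$ directly, so $N\,\E{k-1/2}{h_{Q_k}(\y_{k,Q_k},\x_k)} = h(\byb_k,\x_k)$; then combining with the displayed identity for $h(\byb_k,\x_k)$ and taking marginal expectations gives the chain of equalities as stated.

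\medskip

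I do not anticipate a serious obstacle here — the argument is essentially bookkeeping once the conditioning structure is set up correctly. The one point requiring a little care is the measurability discipline: one must be sure that $\byb_k$, $\x_k$, and $\y_{k-1}$ are all $\F_{k-1/2}$-measurable so that conditioning on $\F_{k-1/2}$ really does freeze them and leaves only $Q_k$ random; this is exactly what the hypotheses of the lemma guarantee. A second minor subtlety is that the lemma is stated for $Q_k$ sampled \emph{uniformly} on $[N]$, so the weights $1/N$ and $1-1/N$ are literal rather than the general $q_J$; I would just use uniformity directly rather than invoking the general sampling vector $\qb$.
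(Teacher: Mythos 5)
Your proof is correct and follows essentially the same route as the paper's: condition on $\F_{k-1/2}$ (freezing $\x_k$, $\byb_k$, $\y_{k-1}$), use uniformity of $Q_k$ and block separability to get $\E{k-1/2}{h(\y_k,\x_k)} = \tfrac{1}{N}h(\byb_k,\x_k) + \tfrac{N-1}{N}h(\y_{k-1},\x_k)$, and rearrange. The only cosmetic difference is that you establish the first-equals-third link directly via $N\,\E{k-1/2}{h_{Q_k}(\byb_{k,Q_k},\x_k)} = h(\byb_k,\x_k)$, whereas the paper derives the first equality by subtracting the off-block sum; both are equivalent bookkeeping.
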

\begin{proof}
    Write
    \begin{align*}
        \E{k-1/2}{h_{Q_k}(\y_{k, Q_k}, \x_k)} 
        &= \E{k-1/2}{h(\y_k, \x_k)} - \E{k-1/2}{\textstyle\sum_{J \neq Q_k} h_J(\y_{k, J}, \x_k)}\\
        &= \E{k-1/2}{h(\y_k, \x_k)} - \E{k-1/2}{\textstyle\sum_{J \neq Q_k} h_J(\y_{k-1, J}, \x_k)}\\
        &= \E{k-1/2}{h(\y_k, \x_k)} - \frac{N-1}{N} h(\y_{k-1}, \x_k).
    \end{align*}
    Take the marginal expectation of both terms to prove the first result. For the second,
    \begin{align}
        \E{k-1/2}{h(\y_k, \x_k)}
        &= \frac{1}{N}\sum_{J=1}^N \E{k-1/2}{h_J(\byb_{k, J}, \x_k) | J = Q_k} \notag\\
        &\quad + \frac{N-1}{N} \sum_{J=1}^N \E{k-1/2}{h_J(\y_{k-1, J}, \x_k)| J \neq Q_k} \notag\\
        &= \frac{1}{N}\sum_{J=1}^N h_J(\byb_{k, J}, \x_k) + \frac{N-1}{N} \sum_{J=1}^N h_J(\y_{k-1, J}, \x_k) \notag\\
        &= \frac{1}{N}h(\byb_k, \x_k) + \frac{N-1}{N}h(\y_{k-1}, \x_k).\notag
    \end{align}
    Rearrange terms and apply the marginal expectation to achieve the second result.
\end{proof}
The condition that $\byb_{k}$ is $\F_{k-1/2}$-measurable reflects the viewpoint that $\byb_k$ can be computed via a deterministic update given $\x_k$, after which $\y_k$ can be computed exactly from the random triple $(\y_{k-1}, \byb_k, Q_k)$.

We proceed to the details of the convergence analysis. The formula for $\g_{k-1} \in \R^d$ is given after rigorously introducing the sequence $(\byb_k)_{k \geq 0}$, on which $\g_{k-1}$ depends. We will first specify the upper bound and dual update.
We define the update for $\y_k$ (\edit{which elucidates the update of $\byb_k$}). Because of separability, we may perform this update at $O(b)$ for $b := n/N$ cost on average across blocks.  We do so by including only one additive component of $\psi$ in the objective that $\y_k$ maximizes. 
We notice that the strong concavity constant of the objective defining $\y_k$ may change if we only use some components of $\psi$. To account for this, we design a slightly different proximity term from the one in \Cref{algo:drago_v2}. 
Recall the definition of the Bregman divergences $\breg{J}{\cdot}{\cdot}$ from \Cref{sec:preliminaries}.
The upper bound is stated in expectation, as the results may not hold for all realizations (as was the case for previous versions of the initial gap bounds).
\begin{lemma}\label[lemma]{lem:sep:upper}
    For all $k \geq 1$, consider the update
    \begin{align}
        \y_k = \argmax_{\y \in \Y} \Big\{ &a_k \p{N\y_{Q_k}\barf_{k-1/2, Q_k} - N\psi_{Q_k}(\y_{Q_k})} + Na_k\nu\breg{Q_k}{\y_{Q_k}}{\y_{k-1, Q_k}} \notag\\
        & - a_k \nu \breg{\Y}{\y}{\y_{k-1}}  - \tfrac{A_{k-1}\nu + \nu_{0}}{2}\breg{\Y}{\y}{\y_{k-1}} \Big\},\label{eq:dual_update_sep}
    \end{align}
    If we have that $a_k\nu \leq \frac{A_{k-1}\nu + \nu_0}{4(N-1)}$ (when $N \geq 2$),
    then it holds that
    \begin{align}
         a_k\Ex[\Lcal(\x_k, \v)]
        &\leq  a_k\Ex\sbr{\mc{L}(\x_k, \byb_k)} + \tfrac{1}{2}\Ex[\TD_{k-1}] - \tfrac{1}{2}\Ex[\TD_k] - \tfrac{1}{4}\Ex[\CD_k] \notag\\
        &\quad + a_k\Ex\ip{\v - \byb_k, \fb(\x_k) - \bfb_{k-1/2}}.
    \end{align}
\end{lemma}
\begin{proof}
    We add and subtract terms to expose the objective defining $\y_k$. Write
    \begin{align*}
        a_k\Ex[\Lcal(\x_k, \v)]
        &= a_k\Ex\sbr{\ip{\v, \fb(\x_k)} - \psi(\v) + \phi(\x_k)}\\
        &= a_k\Ex\sbr{N\v_{Q_k} \barf_{k-1/2, Q_k} - N\psi_{Q_k}(\v_{Q_k})}\\
        &\quad + a_k\Ex\sbr{N \nu \breg{Q_k}{\v_{Q_k}}{\y_{k-1, Q_k}} - \nu \breg{\Y}{\v}{\y_{k-1}}} \\
        &\quad - \tfrac{A_{k-1}\nu + \nu_0}{2}\Ex[\breg{\Y}{\v}{\y_{k-1}}] + \tfrac{A_{k-1}\nu + \nu_0}{2}\Ex[\breg{\Y}{\v}{\y_{k-1}}] \\
        &\quad + a_k\Ex\ip{\v, \fb(\x_k) - \bfb_{k-1/2}}  + \phi(\x_k)
    \end{align*}
    where we use that $(\v, \fb(\x_k), \bfb_{k - 1/2}, \y_{k-1})$ is independent of $Q_k$ and that
    \begin{align*}
        \Ex[N \nu \breg{Q_k}{\v_{Q_k}}{\y_{k-1, Q_k}} - \nu \breg{\Y}{\v}{\y_{k-1}}] = 0.
    \end{align*}
    Notice that the sum $- N\psi_{Q_k}(\v_{Q_k}) + N \nu\breg{Q_k}{\v_{Q_k}}{\y_{k-1, Q_k}}$ is linear, hence has strong concavity constant zero; we have that $\v \rightarrow -a_k \nu \breg{\Y}{\v}{\y_{k-1}}$ provides the required $(a_k \nu)$-strong concavity.
    Next, we use the definition of $\y_k$ and \Cref{lem:3pt} to achieve
    \begin{align*}
        a_k\Ex[\Lcal(\x_k, \v)]
        &\leq a_k\Ex\sbr{N \y_{k, Q_k} \barf_{k-1/2, Q_k} - N\psi_{Q_k}(\y_{k, Q_k})} \\
        &\quad - a_k\Ex\sbr{\nu\breg{\Y}{\y_k}{\y_{k-1}} + N \nu \breg{Q_k}{\y_{k, Q_k}}{\y_{k-1, Q_k}}} \\
        &\quad + \phi(\x_k) - \tfrac{1}{2}\Ex[\CD_k] + \tfrac{1}{2}\Ex[\TD_{k-1}]  - \tfrac{1}{2}\Ex[\TD_{k}] + a_k\Ex\ip{\v, \fb(\x_k) - \bfb_{k-1/2}}.
    \end{align*}
    Apply \Cref{lem:expect} to compute each of the expectations involving $Q_k$, and see that
    \begin{align*}
        \E{}{N \y_{k, Q_k} \barf_{k-1/2, Q_k}} &= \E{}{\ip{\byb_{k}, \bfb_{k-1/2}}}\\
        \E{}{N\psi_{Q_k}(\y_{k, Q_k})} &= \E{}{\psi(\byb_{k})}\\
        \Ex[N \nu \breg{Q_k}{\y_{k, Q_k}}{\y_{k-1, Q_k}}] &= N \nu\Ex[\breg{\Y}{\y_k}{\y_{k-1}}].
    \end{align*}
    Plugged into the display above, this achieves
    \begin{align*}
        a_k\Ex[\Lcal(\x_k, \v)]
        &\leq a_k\Ex[\Lcal(\x_k, \byb_k)]- \tfrac{1}{2}\Ex[\CD_k] + \tfrac{1}{2}\Ex[\TD_{k-1}] - \tfrac{1}{2}\Ex[\TD_k]  \\
        &\quad + (N-1) a_k\nu\Ex[\breg{\Y}{\y_k}{\y_{k-1}}] \\
        &\quad + a_k\Ex\ip{\v - \byb_k, \fb(\x_k) - \bfb_{k-1/2}}.
    \end{align*}
    The term $(N-1) a_k\nu\Ex[\breg{\Y}{\y_k}{\y_{k-1}}]$ cancels with $\tfrac{1}{4}\Ex[\CD_k]$ by the assumption on $a_k$ and the strong convexity of Bregman divergences.
\end{proof}

Defining the primal update will reflect two different strategies---namely, those used in \Cref{sec:nonsep:stochastic}. For both cases, recall the table $\hxb_{k, 1}, \ldots, \hxb_{k, N}$ introduced in~\eqref{eq:first_order_table} and~\eqref{eq:table_elements}. We will use the primal gradient estimate
\begin{align}
    \g_{k-1} &= \hJb_{k-1}^\top \y_{k-1} + \frac{a_{k-1}}{p_{P_k} a_k}\sum_{i \in B_{P_k}}\p{\by_{k-1, i}\grad f_{i}(\x_{k-1}) - y_{k-2, i} \hgb_{k-2, i}}.\label{eq:sep:primal_grad_est}
\end{align}
Notice that we do not need to use the table $\hyb_k$ introduced in~\eqref{eq:primal_grad_est} from \Cref{sec:nonsep:stochastic}, as the matrix-vector product $\hJb_{k-1}^\top \y_{k-1}$ above can be maintained in $O(bd)$ on average because only $b = n/N$ components of $\y_k$ change each iteration. When using these tables of iterates, we will encounter the familiar terms $\hTP_{k, I} = (A_k\mu + \mu_0)\breg{\X}{\u}{\hxb_{k, I}}$ from~\eqref{eq:stoch:telescoping_terms} and $\hCP_{k, I} = (A_{k-1}\mu + \mu_0)\breg{\X}{\x_k}{\hxb_{k-1, I}}$ from~\eqref{eq:stoch:cancellation_terms} for $I = 1, \ldots, N$. 

With these elements in hand, we produce the following lower bound, which follows from identical steps to \Cref{lem:lower} and so has its proof omitted. Recall the use of probability weights $\gamma_1, \ldots, \gamma_N$ in the update~\eqref{eq:prim_update}.
\begin{lemma}\label[lemma]{lem:sep:lower}
    For any $k \geq 1$, let $\g_{k-1} \in \R^d$ and $\wP_k \in [0, 1)$, consider the update
    \begin{align}
        \x_k = \argmin_{\x \in \X} &\Big\{a_k \ip{\g_{k-1}, \x} + a_k\phi(\x) + \tfrac{1-\wP_k}{2} (A_{k-1}\mu + \mu_0)\breg{\X}{\x}{\x_{k-1}} \notag\\
        &\quad + \tfrac{\wP_k}{2}(A_{k-1}\mu + \mu_0)  \sum_{I=1}^N \gamma_I \breg{\X}{\x}{\hxb_{k-1, I}}\Big\}. \label{eq:sep:primal_update}
    \end{align}
    For $k \geq 1$, it holds that
    \begin{align}
        a_k\Lcal(\u, \byb_k)
        &\geq a_k \mc{L}(\x_k, \byb_k)  + a_k \ip{\grad \fb(\x_k)^\top \byb_k - \g_{k-1}, \u - \x_k} \label{eq:sep:inner_prod_primal}\\
        &\quad + \p{\tfrac{1-\wP_k}{2}\TP_k - \tfrac{1-\wP_{k-1}}{2}\TP_{k-1}} + \tfrac{\wP_k}{2}\p{\TP_k - \textstyle\sum_{I=1}^N \gamma_I\hTP_{k-1, I}}\\
        &\quad + \tfrac{1-\wP_k}{2}\CP_k + \tfrac{\wP_k}{2}\textstyle\sum_{I=1}^N \gamma_I\hCP_{k, I} + \frac{a_k \mu}{2} \breg{\X}{\u}{\x_k}. \label{eq:sep:telescope_primal}
    \end{align}
\end{lemma}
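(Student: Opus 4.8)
The plan is to follow the proof of \Cref{lem:lower} essentially verbatim, the only structural change being that the dual point at which the Lagrangian is lower-bounded is the return-value iterate $\byb_k$ rather than $\y_k$; in particular this is again a deterministic (per-realization) inequality, so no expectation is needed here. The first step is to record that $\byb_k\in\Y$, since~\eqref{eq:sep:dual_update} defines it as a maximizer over $\Y$; hence $\byb_{k,j}\geq 0$ whenever $f_j$ is nonlinear, which makes $\x\mapsto\ip{\byb_k,f(\x)}=\sum_{j=1}^n\byb_{k,j}f_j(\x)$ convex and differentiable on $\X$. Applying the first-order inequality for this convex function at $\x_k$ and using $\Lcal(\u,\byb_k)=\ip{\byb_k,f(\u)}-\psi(\byb_k)+\phi(\u)$ gives the analogue of~\eqref{eq:lower:cvx},
\begin{align*}
    a_k\Lcal(\u,\byb_k)\geq a_k\ip{\byb_k,f(\x_k)+\grad f(\x_k)(\u-\x_k)}-a_k\psi(\byb_k)+a_k\phi(\u).
\end{align*}

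Next I would add and subtract the objective $m$ minimized by $\x_k$ in~\eqref{eq:sep:primal_update} and invoke \Cref{lem:3pt}, instantiated with $\varphi$ the generator of $\breg{\X}{\cdot}{\cdot}$, with $g=\phi$ (relatively $\mu$-strongly convex with respect to $\varphi$ by \Cref{asm:str_cvx}), $h=a_k\ip{\g_{k-1},\cdot}$ restricted to $\X$, $a=a_k$, $A=A_{k-1}$, $\gamma=\mu$, $\gamma_0=\mu_0$, and the $r=N+1$ prox-centers $\x_{k-1},\hxb_{k-1,1},\dots,\hxb_{k-1,N}$ with weights $1-\wP_k,\wP_k\gamma_1,\dots,\wP_k\gamma_N$. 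These weights are nonnegative and sum to one because $\sum_{I=1}^N\gamma_I=1$, so $\tfrac{A\gamma+\gamma_0}{2}\sum_i w_i\breg{\varphi}{\x}{\z_i}$ coincides with the proximity term of~\eqref{eq:sep:primal_update}, and all prox-centers lie in $\ri(\dom\varphi)$ by the standing assumption. Since $\x_k$ is the minimizer, \Cref{lem:3pt} yields $m(\u)\geq m(\x_k)+\tfrac{A_k\mu+\mu_0}{2}\breg{\X}{\u}{\x_k}+\tfrac{a_k\mu}{2}\breg{\X}{\u}{\x_k}$ with $A_k=A_{k-1}+a_k$.

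Finally I would reorganize exactly as in \Cref{lem:lower}: expand $m(\u)$ and $m(\x_k)$, recognize $\TP_k$, $\TP_{k-1}$, $\hTP_{k-1,I}$, $\CP_k$, $\hCP_{k,I}$ from~\eqref{eq:telescoping_terms}--\eqref{eq:stoch:cancellation_terms}; drop the nonnegative term $\Delta_h(\u,\x_k)\geq 0$; use $-\tfrac{1-\wP_k}{2}\TP_{k-1}\geq-\tfrac{1-\wP_{k-1}}{2}\TP_{k-1}$ (the monotonicity of $(\wP_k)$ being used just as in \Cref{lem:lower}); and substitute $\Lcal(\x_k,\byb_k)=\ip{\byb_k,f(\x_k)}-\psi(\byb_k)+\phi(\x_k)$. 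Collecting terms produces~\eqref{eq:sep:inner_prod_primal}--\eqref{eq:sep:telescope_primal}. I do not expect a genuine obstacle, since the argument is structurally identical to \Cref{lem:lower}; the two points that require a little care are (i) the legitimacy of the convexity step at $\byb_k$, which relies on $\byb_k\in\Y$ so that the sign conditions on the nonlinear components of $f$ hold, and (ii) keeping the bookkeeping of the $N+1$ prox-centers and the normalization $\sum_I\gamma_I=1$ straight when matching~\eqref{eq:sep:primal_update} to the hypotheses of \Cref{lem:3pt}.
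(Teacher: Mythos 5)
Your proposal is correct and matches the paper's intent exactly: the paper omits this proof precisely because it "follows from identical steps to \Cref{lem:lower}," which is the adaptation you carry out (replacing $\y_k$ by $\byb_k\in\Y$ in the convexity step and matching the $N+1$ prox-centers of~\eqref{eq:sep:primal_update} to the hypotheses of \Cref{lem:3pt}). Your two points of care — the sign conditions ensuring convexity of $\x\mapsto\ip{\byb_k,f(\x)}$ and the monotonicity of $(\wP_k)$ needed for the $\TP_{k-1}$ term — are exactly the right ones.
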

The only difference between \Cref{lem:sep:lower} and \Cref{lem:lower} is the replacement of $\y_k$ by $\byb_k$, which is also an element of $\Y$. On the dual side, we set $\bfb_{k-1/2} = \fb(\x_k)$, so the inner product vanishes in expectation (in other words, $\ID_k$ can be thought of as zero). 
This does not come at an additional computational cost, as the dual update~\eqref{eq:dual_update_sep} only depends on a single block of the coordinate of $\bfb_{k-1/2}$. 
Following the same argument used to produce $\EP_k$ in~\eqref{eq:stoch:error_primal}, we may now build the identity card.
\begin{mdframed}[userdefinedwidth=\linewidth, align=center, linewidth=0.3mm]
\centering
\begin{tabular}{l}
     \textbf{Identity Card 3:} Stochastic update method for separable objectives\\
     \toprule
     $\sub_1$: $\hJb_{k-1}^\top \y_{k-1} + \frac{N a_{k-1}}{a_k}\sum_{i \in B_{P_k}}\p{\by_{k-1, i}\grad f_{i}(\x_{k-1}) - y_{k-2, i} \hgb_{k-2, i}}$\\
     $\sub_2$: $\bfb_{k-1/2} = \fb(\x_k)$\\
     $\sub_3$: Update $\hxb_{k, I}$ for all $I$ using~\eqref{eq:table_elements} and $(\hJb_k, \hfb_k)$ using~\eqref{eq:first_order_table}.\\
     \midrule
     \emph{Primal error:} $\EP_k = a_{k-1} \ip{\tfrac{1}{p_{P_k}}\textstyle\sum_{i \in B_{P_k}} (\by_{k-1, i}\grad f_{i}(\x_{k-1}) - y_{k-2, i} \hgb_{k-2, i}), \x_{k-1} - \x_{k}}$\\
     \emph{Dual error:} $\ED_k = 0$\\
\end{tabular}
\end{mdframed}
Using \Cref{lem:sep:upper} and \Cref{lem:sep:lower}, we may produce a version of \Cref{claim:gap}:
\begin{align}
    &\sum_{k=1}^t a_k\Ex[\gap^{\u, \v}(\x_k, \y_k)] \leq \tfrac{1-\wP_k}{2}\p{\TP_{0} - \Ex[\TP_{t}]} + \tfrac{1}{2}\p{\TD_{0} - \Ex[\TD_{t}]} \notag \\
    &\quad + \sum_{k=1}^{t} \Ex\sbr{\IP_k} - \underbrace{\Ex\sbr{\tfrac{1-\wP_k}{2}\CP_k + \tfrac{1}{4}\CD_k + \tfrac{\wP_k}{2}\textstyle\sum_J \gamma_I \hCP_{k, J}}}_{\text{cancellation terms from~\Cref{lem:sep:upper} and \Cref{lem:sep:lower}}} \label{eq:sep:gap2_cancellation} \\
    &\quad + \underbrace{\tfrac{1}{2}\sum_{k=1}^t \Ex\sbr{\wP_k\p{\textstyle\sum_{I=1}^N \gamma_I \hTP_{k-1, I} - \TP_{k}}- a_k\mu\breg{\X}{\u}{\x_k}}}_{\text{primal table terms from \Cref{lem:sep:lower}}}. \label{eq:sep:gap3_primal}
\end{align}
We proceed to the individual analyses to cancel the lines~\eqref{eq:sep:gap2_cancellation} and~\eqref{eq:sep:gap3_primal}.

\subsection{Strategy 1: Non-Uniform Historical Regularization}\label{sec:sep:historic}
\begin{algorithm}[t]
\setstretch{1.2}
   \caption{Historical Regularization Version of \Cref{algo:drago_v2} (Separable)}
   \label{algo:sep:historic}
    \begin{algorithmic}[1]
   \State {\bfseries Input:} Initial point $(\x_0, \y_0)$, averaging sequence $(a_k)_{k=0}^t$, non-negative weights $(\gamma_I)_{I=1}^N$ that sum to one, balancing sequence $(\wP_k)_{k=1}^t$, sampling vector $\pb$.
   \State Initialize the comparison points $\hxb_{0, I} = \x_0$ for all $I \in [N]$. Initialize the $\hJb_0 = \grad \fb(\x_0)$ and compute the product $\hJb_{0}^\top \y_0$.
   \State Letting $\varphi$ be the generator of $\breg{\X}{\cdot}{\cdot}$, compute $\br{\grad \varphi(\hxb_{0, I})}_{I=1}^N$ and $\sum_I \gamma_I \grad \varphi(\hxb_{0, I})$.
   \State Draw the terminal iteration $\hat{t}$ according to $(a_1/A_t, \ldots, a_t/A_t)$.
   \For {$k=1$ {\bfseries to} $t$}
       \State $\sub_1$: Compute $\g_{k-1}$ using $\pb$ and~\eqref{eq:sep:primal_grad_est}.
       \State Perform the primal update
       \begin{align}
       \x_k &= \argmin_{\x \in \X} \Big\{a_k \ip{\g_{k-1}, \x} + a_k\phi(\x) + \tfrac{A_{k-1}\mu + \mu_0}{2} \varphi(\x)\notag \\
       &\quad - \tfrac{A_{k-1}\mu + \mu_0}{2}\ip{(1-\wP_k) \grad \varphi(\x_{k-1}) + \wP_k\textstyle\sum_{I=1}^N \gamma_I \grad \varphi(\hxb_{k-1, I}), \x}\Big\}.\notag
       \end{align}
       \State $\sub_2$: Sample $Q_k$ uniformly and compute $\bfb_{k-1/2, Q_k} = f_{Q_k}(\x_k)$.
       \State If $k = \hat{t}$, compute $\byb_k$ according to~\eqref{eq:full_dual_update_sep} and return $(\x_k, \byb_k)$. 
       Otherwise, compute $\y_{k, Q_k}$ via the update in \Cref{lem:sep:upper}.
       Set $\y_{k, J} = \y_{k-1, J}$ for $J \neq Q_k$.
       \State $\sub_3$: Draw $R_k$ uniformly on $[N]$, and update $(\hxb_{k, I})_{I=1}^N$ via~\eqref{eq:table_elements}. Update the product $\hJb_{k}^\top \y_k$, and update $\br{\grad \varphi(\hxb_{k, I})}_{I=1}^N$ and $\sum_I \gamma_I \grad \varphi(\hxb_{k, I})$ via~\eqref{eq:breg_replacement}.
   \EndFor
\end{algorithmic}

\end{algorithm}

\edit{The full algorithm variant is given in \Cref{algo:sep:historic}. Regarding per-iteration complexity, the exact arguments of \Cref{sec:stochastic:historic} apply, except that the update defining $\y_k$ occurs at cost $\tilde{O}(n/N)$ instead of $\tilde{O}(n)$. Thus, the total per-iteration complexity is $\tilde{O}(nd/N)$.}

As for the analysis, we do not assume that $\wP_k = 0$, and so will cancel both the lines~\eqref{eq:sep:gap2_cancellation} and~\eqref{eq:sep:gap3_primal}.
The resulting complexity will depend on the sampling probabilities $\pb$ and regularization weights $\bgam$ through the following constants:
\begin{align*}
    \Gb_{\pb} := \sqrt{\max_{I \in [N]}\frac{\Gb_I^2}{p_I}} \text{ and } \Lb_{\pb, \bgam} := \sqrt{\max_{I\in[N]} \frac{\Lb_I^2}{p_I \gamma_I}}.
\end{align*}
\begin{thm}\label{thm:sep:historic}
    Under \Cref{asm:smoothness} and \Cref{asm:str_cvx}, consider any $\u \in \X$, $\v \in \Y$ and precision $\epsilon > 0$. 
    There exists a choice of the sequence $(a_k)_{k=1}^t$ and parameters $(\wP_k)_{k=0}^t$ 
    such that \Cref{algo:sep:historic} produces an output point $(\tx_t, \ty_t) \in \X \times \Y$ satisfying $\E{}{\gap^{\u, \v}(\tx_t, \ty_t)} \leq \epsilon$ for $t$ that depends on $\epsilon$ according to the following iteration complexities. \edit{In the following, $a_k$, $\wP_k$, and the iteration complexity are stated in big-$O$ terms, with $\alpha \sim \min\br{{\sqrt{\mu\nu}}/{\Gb_{\pb}}, {\mu}/({\sqrt{N}\Lb_{\pb, \bgam})}, 1/N}$.}
    \begin{center}
    \begin{adjustbox}{max width=\linewidth}
    \begin{tabular}{cccc}
    \toprule
        {\bf Case} & $a_k$ & $\wP_k$ & {\bf Iteration Complexity}\\
        \midrule
        $\mu > 0$, $\nu > 0$
        & \edit{$(1+\alpha)^k$} & \edit{$1/N$}
        &
        $\p{N + \frac{\sqrt{N}\Lb_{\pb, \bgam}}{\mu} + \frac{\sqrt{N}\Gb_{\pb}}{\sqrt{\mu\nu}}}\ln\p{\frac{1}{\epsilon}}$
        \\
        $\mu > 0$, $\nu = 0$
        & \edit{$k$} & \edit{$1/N$}
        &
        $\p{N + \frac{\sqrt{N}\Lb_{\pb, \bgam}}{\mu}}\ln\p{\frac{1}{\epsilon}} + \sqrt{N}\Gb_{\pb}\sqrt{\frac{\sqrt{\mu_0/\nu_0}D_{0} + (a_1\mu/\nu_0) \breg{\X}{\u}{\x_0}}{\mu \epsilon}}$
        \\
        $\mu = 0$, $\nu > 0$
        & \edit{$1$} & \edit{$1/N$}
        &
        $\frac{\sqrt{N}\Lb_{\pb, \bgam} \sqrt{\nu_0/\mu_0}D_{0}}{\epsilon} + \sqrt{N}\Gb_{\pb}\sqrt{\frac{\sqrt{\nu_0/\mu_0} D_{0}}{\nu \epsilon}}$
        \\
        $\mu = 0$, $\nu = 0$
        & \edit{$1$} & \edit{$1/N$}
        &
        $\frac{\sqrt{N}\p{\Lb_{\pb, \bgam}\sqrt{\nu_0/\mu_0}  + \Gb_{\pb}}D_{0}}{\epsilon}$\\
    \bottomrule
    \end{tabular}
    \end{adjustbox}
    \end{center}
\end{thm}

We recall from \Cref{sec:preliminaries} that $\lambb = (\lamb_1, \ldots, \lamb_N)$ for $\lamb_I := \sqrt{\Gb_I^2 + \Lb_I^2}$. The non-uniform sampling complexity given below follows by letting $p_I \propto \lamb_I$ and $\gamma_I \propto \Lb_I$.
\begin{center}
\begin{adjustbox}{max width=\linewidth}
\begin{tabular}{c|cc}
\toprule
    {\bf Constant} & {\bf Uniform Sampling} & {\bf Non-Uniform Sampling}\\
    \midrule
    $\Gb_{\pb}$ & $\sqrt{N}\norm{\Gbb}_\infty$  & $\norm{\lambb}_1^{1/2}\norm{\Gbb}_\infty^{1/2}$ \\
    $\Lb_{\pb, \bgam}$ & $N\norm{\Lbb}_\infty$ & $\norm{\lambb}_1^{1/2}\norm{\Lbb}_1^{1/2}$\\
    \bottomrule
\end{tabular}
\end{adjustbox}
\end{center}
There are strict advantages both in the dependence on $\Gbb$ and $\Lbb$ in the separable case over, say, the complexities given in \Cref{thm:stochastic:historic}. This primarily results from the freedom to select the constants $\gamma_1, \ldots, \gamma_N$ based only on the smoothness constants $\Lbb$. Thus, in the case of non-uniform sampling, the dependencies from \Cref{thm:stochastic:historic} (using the same historical regularization strategy) reduce in \Cref{thm:sep:historic} from $\norm{\lambb}_1^{1/2}\norm{\Gbb}_1^{1/2}$ to $\norm{\lambb}_1^{1/2}\norm{\Gbb}_\infty^{1/2}$ and from $\norm{\lambb}_1$ to $\norm{\lambb}_1^{1/2}\norm{\Lbb}_1^{1/2}$.

The associated \Cref{prop:sep:historic} follows very similar steps to the proof of \Cref{prop:stochastic:historic}.
\begin{prop}\label[proposition]{prop:sep:historic}
    Let $(\x_0, \y_0) \in \ri(\dom(\phi)) \times \ri(\dom(\psi))$ and $\br{(\x_k, \y_k)}_{k\geq 1}$ be generated by the update from \Cref{lem:sep:lower} with non-decreasing sequence $\wP_k$ and \Cref{lem:sep:upper}, with $\g_{k-1}$ and $\bfb_{k-1/2}$ given by~\eqref{eq:sep:primal_grad_est}, and $\fb(\x_k)$, respectively. Define $a_1 = \min\br{\tfrac{\sqrt{(1-\wP_0)\mu_0\nu_0}}{4\sqrt{N}\Gb_{\pb}}, \tfrac{\sqrt{\wP_0(1-\wP_0)}\mu_0}{4\Lb_{\pb, \bgam}}}$ and select $(a_k)_{k \geq 2}$ such that the conditions
    \begin{align}
        a_{k} &\leq \min\br{\tfrac{\sqrt{(1-\wP_k)(A_{k}\mu + \mu_0)(A_{k-1}\nu + \nu_0)}}{4\sqrt{N}\Gb_{\pb}}, \tfrac{\sqrt{\wP_k(1-\wP_k)(A_{k}\mu + \mu_0)(A_{k-1}\mu + \mu_0)}}{4\Lb_{\pb, \bgam}}}, \label{eq:sep:cond1}
    \end{align}
    are satisfied. In addition, impose that for any $\ell = 1, \ldots, t-1$, it holds that
    \begin{align}
        \frac{\mu}{N}\sum_{k'=0}^{\infty}  \wP_{\ell + k' + 1} A_{\ell + k'}  (1-1/N)^{k'} &\leq  (\wP_\ell A_\ell + a_\ell)\mu.\label{eq:sep:primal:decay}
    \end{align}
    We have that for any $\u \in \X$ and $\v \in \Y$, 
    \begin{align*}
        \sum_{k=1}^t a_k \Ex[\gap^{\u, \v}(\x_k, \byb_k)] + \tfrac{1-\wP_k}{4}\Ex[\TP_t] + \tfrac{1}{4}\Ex[\TD_t]
        &\leq C_{\X}\breg{\X}{\u}{\x_0} + \tfrac{1}{2}\TD_{0}.
    \end{align*}
    for $C_{\X} := \tfrac{1}{2}\p{\mu_0 + \textstyle\sum_{\ell=0}^{\infty} \wP_{\ell+1} (A_{\ell}\mu + \mu_0)  (1-1/N)^{\ell}}$.
\end{prop}

\edit{Because \Cref{thm:sep:historic} mirrors the logic of \Cref{thm:stochastic:historic}, the proof is omitted. }

\subsection{Strategy 2: Non-Uniform Block Replacement Probabilities }\label{sec:sep:minty}
\begin{algorithm}[t]
\setstretch{1.2}
   \caption{Non-Uniform Block Replacement Version of \Cref{algo:drago_v2} (Separable)}
   \label{algo:sep:minty}
    \begin{algorithmic}[1]
   \State {\bfseries Input:} Initial point $(\x_0, \y_0)$, averaging sequence $(a_k)_{k=0}^t$, sampling vectors $(\pb, \rb)$.
   \State Initialize the comparison points $\hxb_{0, I} = \x_0$ for all $I \in [N]$. Initialize the $\hJb_0 = \grad \fb(\x_0)$ and compute the product $\hJb_{0}^\top \y_0$.
   \State Draw the terminal iteration $\hat{t}$ according to $(a_1/A_t, \ldots, a_t/A_t)$.
   \For {$k=1$ {\bfseries to} $t$}
       \State $\sub_1$: Compute $\g_{k-1}$ using $\pb$ and~\eqref{eq:sep:primal_grad_est}.
       \State Perform the primal update:
       \begin{align}
           \x_k &= \argmin_{\x \in \X} \ a_k \ip{\g_{k-1}, \x} + a_k\phi(\x) + \tfrac{A_{k-1}\mu + \mu_0}{2}\breg{\X}{\x}{\x_{k-1}}.\notag
       \end{align}
       \State $\sub_2$: Sample $Q_k$ uniformly and compute $\bfb_{k-1/2, Q_k} = f_{Q_k}(\x_k)$.
       \State If $k = \hat{t}$, compute $\byb_k$ according to~\eqref{eq:full_dual_update_sep} and return $(\x_k, \byb_k)$. 
       Otherwise, compute $\y_{k, Q_k}$ via the update in \Cref{lem:sep:upper}.
       Set $\y_{k, J} = \y_{k-1, J}$ for $J \neq Q_k$.
       \State $\sub_3$: Draw $R_k \sim \rb$ and update $(\hxb_{k, I})_{I=1}^N$ via~\eqref{eq:table_elements}. Update the product $\hJb_{k}^\top \hyb_k$ via~\eqref{eq:g_replacement}.
   \EndFor
\end{algorithmic}

\end{algorithm}

\edit{See \Cref{algo:sep:minty} for the full algorithm variant. For its per-iteration complexity, the exact arguments of \Cref{sec:stochastic:minty} apply, except that the update defining $\y_k$ may now occur at cost $\tilde{O}(n/N)$ instead of $\tilde{O}(n)$. Thus, the total per-iteration complexity is $\tilde{O}(nd/N)$.}

Like its counterpart \Cref{sec:stochastic:minty}, this section will rely on choosing the parameter $\rb = (r_1, \ldots, r_N)$, which governs the update probabilities of the primal table $\hxb_{k, 1}, \ldots, \hxb_{k, N}$. Accordingly, we may set $\wP_k = 0$ to simplify our gap bound to
\begin{align}
    &\sum_{k=1}^t a_k\Ex[\gap^{\u, \v}(\x_k, \y_k)] \leq \tfrac{1}{2}\p{\TP_{0} - \Ex[\TP_{t}]} + \tfrac{1}{2}\p{\TD_{0} - \Ex[\TD_{t}]} \notag \\
    &\quad + \sum_{k=1}^{t-1} \Ex\sbr{\IP_k} - \sum_{k=1}^{t} \Ex\sbr{\tfrac{1}{2}\CP_k + \tfrac{1}{4}\CD_k}. \label{eq:sep:gap:simplified}
\end{align}
By this point, all arguments used in the analysis have been seen before, in that ideas related to separability were employed in \Cref{sec:sep:historic} and ideas related to block replacement probabilities were employed in \Cref{sec:stochastic:minty}. Thus, the proofs are relatively short in this section.
We cancel~\eqref{eq:sep:gap:simplified} in \Cref{prop:sep:minty}. The resulting complexity will depend on the sampling probabilities $\pb$ and $\rb$ through the constants
\begin{align*}
    \Gb_{\pb} := \sqrt{\max_{I \in [N]}\frac{\Gb_I^2}{p_I}} \text{ and } \Lb_{\pb, \rb} := \sqrt{\sum_{I=1}^N \frac{\Lb_I^2}{p_Ir_I^2}},
\end{align*}
employed in \Cref{prop:sep:minty}.

As in \Cref{sec:sep:historic}, because the following result follows the same argument as \Cref{thm:stochastic:minty}, the proof is omitted. 
We use initial distance quantity $D_0$ from \Cref{thm:nonsep:full}.
\begin{thm}\label{thm:sep:minty}
    Under \Cref{asm:smoothness} and \Cref{asm:str_cvx}, consider any $\u \in \X$, $\v \in \Y$ and precision $\epsilon > 0$.  
    There exists a choice of the sequences $(a_k)_{k=1}^t$ and $(\wP_k)_{k=1}^t$ such that \Cref{algo:sep:minty} produces an output point $(\tx_t, \ty_t) \in \X \times \Y$ satisfying $\E{}{\gap^{\u, \v}(\tx_t, \ty_t)} \leq \epsilon$ for $t$ that depends on $\epsilon$ according to the following iteration complexities. 
    \edit{In the following, $a_k$, and the iteration complexity are stated in big-$O$ terms, with $\alpha \sim \min\br{{\sqrt{\mu\nu}}/{\Gb_{\pb}}, {\mu}/({\Lb_{\pb, \rb})}, 1/N}$.}
    \begin{center}
    \begin{adjustbox}{max width=\linewidth}
    \begin{tabular}{ccc}
    \toprule
        {\bf Case} & $a_k$ & {\bf Iteration Complexity}\\
        \midrule
        $\mu > 0$, $\nu > 0$
        & $(1+\alpha)^k$
        &
        $O\p{N + \frac{\Lb_{\pb, \rb}}{\mu} + \frac{\sqrt{N}\Gb_{\pb}}{\sqrt{\mu\nu}}}\ln\p{\frac{1}{\epsilon}}$
        \\
        $\mu > 0$, $\nu = 0$
        & $k$
        &
        $\p{N + \frac{\Lb_{\pb, \rb}}{\mu}}\ln\p{\frac{1}{\epsilon}} + \sqrt{N}\Gb_{\pb}\sqrt{\frac{\sqrt{\mu_0/\nu_0}D_{0}}{\mu \epsilon}}$
        \\
        $\mu = 0$, $\nu > 0$
        & $1$
        &
         $N\ln \p{\frac{1}{\epsilon}} + \frac{\Lb_{\pb, \rb}\sqrt{\nu_0/\mu_0}D_{0}}{\epsilon} + \sqrt{N}\Gb_{\pb}\sqrt{\frac{\sqrt{\nu_0/\mu_0}D_{0}}{\nu \epsilon}}$
        \\
        $\mu = 0$, $\nu = 0$
        & $1$
        &
        $N\ln \p{\frac{1}{\epsilon}} + \frac{\p{\Lb_{\pb, \rb}\sqrt{\nu_0/\mu_0}  + \sqrt{N}\Gb_{\pb}}D_{0}}{\epsilon}$\\
    \bottomrule
    \end{tabular}
    \end{adjustbox}
    \end{center}
\end{thm}
The non-uniform sampling complexity given below follows by letting $p_I \propto \lamb_I^{1/2}$ and $r_I \propto \Lb_I^{1/2}$.
\begin{center}
\begin{adjustbox}{max width=\linewidth}
\begin{tabular}{c|cc}
\toprule
    {\bf Constant} & {\bf Uniform Sampling} & {\bf Non-Uniform Sampling}\\
    \midrule
    $\Gb_{\pb}$ & $\sqrt{N}\norm{\Gbb}_\infty$  & $\norm{\lambb}_{1/2}^{1/4} \norm{\Gbb}_\infty^{3/4}$\\
    $\Lb_{\pb, \rb}$ & $N^{3/2}\norm{\Lbb}_2$ & $\norm{\lambb}_{1/2}^{1/4}  \norm{\Lbb}_{1/2}^{3/4}$\\
    \bottomrule
\end{tabular}
\end{adjustbox}
\end{center}
We compare the resulting complexity to the non-separable analog in \Cref{thm:stochastic:minty}. There are improvements both in the dependence on $\Gbb$ and $\Lbb$ when the sampling scheme can be tuned. \Cref{thm:sep:minty} improves the dependence on $\Gbb$ from $\norm{\lambb}_{1/2}^{1/2} \norm{\Gbb}_\infty^{1/4}$ to $\norm{\lambb}_{1/2}^{1/4} \norm{\Gbb}_\infty^{3/4}$. As for the dependence on $\Lbb$, this improves from $\norm{\lambb}_{1/2}^{3/4}  \norm{\Lbb}_{1/2}^{1/4}$ to $\norm{\lambb}_{1/2}^{1/4}  \norm{\Lbb}_{1/2}^{3/4}$. The mechanism is analogous to the improvement from \Cref{thm:stochastic:historic} to \Cref{thm:sep:historic}; because $\ID_k = 0$, the constants do not have to adapt in order to control an error term of the form $\ED_k$.

To justify \Cref{thm:sep:minty}, we provide the associated \Cref{prop:sep:minty} to establish the growth conditions on $(a_k)_{k\geq 1}$.
\begin{prop}\label[proposition]{prop:sep:minty}
    Let $(\x_0, \y_0) \in \ri(\dom(\phi)) \times \ri(\dom(\psi))$ and $\br{(\x_k, \y_k)}_{k\geq 1}$ be generated using $\g_{k-1}$ and $\bfb_{k-1/2}$ given by~\eqref{eq:sep:primal_grad_est} and $\fb(\x_k)$, respectively. Define $a_1 = \min\br{\frac{\sqrt{\mu_0\nu_0}}{4\sqrt{2}\sqrt{N}\Gb_{\pb}}, \frac{\mu_0}{30\Lb_{\pb, \rb}}}$ and select $(a_k)_{k \geq 2}$ such that the conditions
    \begin{align}
        a_{k} \leq \min\br{\frac{\sqrt{(A_k\mu + \mu_0)(A_{k-1}\nu + \nu_0)}}{4\sqrt{2}\sqrt{N}\Gb_{\pb}}, \frac{\sqrt{(A_k\mu + \mu_0)(A_{k-1}\mu + \mu_0)}}{30\Lb_{\pb, \rb}}}\label{eq:sep:rate}
    \end{align}
    and $\frac{a_{k}^2}{A_k\mu + \mu_0} \leq \textstyle\min_I (1 + r_I/5)\frac{a_{k-1}^2}{A_{k-1}\mu + \mu_0}$ hold.
    We have that for any $\u \in \X$ and $\v \in \Y$, 
    \begin{align*}
        \sum_{k=1}^t a_k \Ex[\gap^{\u, \v}(\x_k, \y_k)] + \tfrac{1}{4}\Ex[\TP_t] + \tfrac{1}{4}\Ex[\TD_t]
        &\leq \tfrac{1}{2}\TP_{0} + \tfrac{1}{2}\TD_{0}.
    \end{align*}
\end{prop}
\begin{proof}
    Mirroring the proof of \Cref{prop:stochastic:minty}, by Young's inequality with parameter $(A_{k-1}\mu + \mu_0)/4$, we have for $k = 2, \ldots, t-1$ that
    \begin{align}
        \Ex[\EP_k] &= a_{k-1} \Ex\ip{\tfrac{1}{p_{P_k}}\textstyle\sum_{i \in B_{P_k}} (\by_{k-1, i}\grad f_{i}(\x_{k-1}) - y_{k-2, i} \hgb_{k-2, i}),  \x_{k-1} - \x_{k}} \notag \\
        &\leq \frac{1}{4} \Ex[\CP_k]+ \frac{2a_{k-1}^2}{A_{k-1}\mu + \mu_0} \Ex\norm{\tfrac{1}{p_{P_k}}\textstyle\sum_{i \in B_{P_k}} (\by_{k-1, i}\grad f_{i}(\x_{k-1}) - y_{k-2, i} \hgb_{k-2, i})}_{\X^*}^2 \label{eq:sep:minty:primal:middle}\\
    \end{align}
    and
    \begin{align}
        &-a_t \Ex[\ip{\grad \fb(\x_t)^\top \byb_t - \hJb_{t-1}^\top \y_{t-1}, \u - \x_t}] \notag\\
        &\leq \frac{1}{4}\Ex[\TP_{t}] + \frac{2a_{t-1}^2}{A_{t-1}\mu + \mu_0} \Ex\norm{\tfrac{1}{p_{P_{t+1}}}\textstyle\sum_{i \in B_{P_{t+1}}} (\by_{t-1, i}\grad f_{i}(\x_{t-1}) - y_{t-2, i} \hgb_{t-2, i})}_{\X^*}^2. \label{eq:sep:minty:primal:last}
    \end{align}    
    Apply \Cref{lem:separation} to achieve
    \begin{align*}
         &\Ex\norm{\tfrac{1}{p_{P_k}}\textstyle\sum_{i \in B_{P_k}} (\by_{k-1, i}\grad f_{i}(\x_{k-1}) - y_{k-2, i} \hgb_{k-2, i})}_{\X^*}^2\\
         &\leq 2\sum_{I=1}^N\frac{\Gb_I^2}{p_I}\Ex\norm{\byb_{k-1, I} - \y_{k-2,I}}_{2}^2 + 2\sum_{I=1}^N\frac{\Lb_I^2}{p_I}\Ex\norm{\x_{k-1} - \hxb_{k-2, I}}_{\X}^2,
    \end{align*}
    where the last line follows from Young's inequality. For the first term, we compute its expectation using \Cref{lem:expect}, %
    \begin{align*}
        2\textstyle\sum_{I=1}^N\frac{\Gb_I^2}{p_I}\Ex\norm{\byb_{k-1, I} - \y_{k-2,I}}_{2}^2 &\leq2 \textstyle\max_{I}\frac{\Gb_I^2}{p_I}\Ex\norm{\byb_{k-1} - \y_{k-2}}_{2}^2\\
        &\leq 2\textstyle\max_{I}\frac{\Gb_I^2}{p_I}\Ex\norm{\byb_{k-1} - \y_{k-2}}_{\Y}^2\\
        &= 2N\textstyle\max_{I}\frac{\Gb_I^2}{p_I}\Ex\norm{\y_{k-1} - \y_{k-2}}_{\Y}^2\\
        &\leq 4N\underbrace{\textstyle\max_{I}\frac{\Gb_I^2}{p_I}}_{\Gb_{\pb}^2}\Ex[\breg{\Y}{\y_{k-1}}{\y_{k-2}}],
    \end{align*}
    where we used  $\norm{\cdot}_2 \leq \norm{\cdot}_\Y$ in the second inequality. For the second term, use the same argument leading to~\eqref{eq:stoch:ep2} in the proof of \Cref{prop:stochastic:minty} to achieve
    \begin{align}
        2\sum_{I=1}^N\frac{\Lb_I^2}{p_I}\Ex\norm{\x_{k-1} - \hxb_{k-2, I}}_{\X}^2  &\leq 50\p{\sum_{I=1}^N \frac{\Lb_I^2}{p_Ir_I^2}} \sum_{k'=1}^{t} \frac{a_{k'}^2}{A_{k'}\mu + \mu_0}  \Ex\norm{\x_{k'} - \x_{k'-1}}_\X^2 \notag\\
        &\leq 100\underbrace{\p{\sum_{I=1}^N \frac{\Lb_I^2}{p_Ir_I^2}}}_{\Lb_{\pb, \rb}^2} \sum_{k'=1}^{t} \frac{a_{k'}^2}{A_{k'}\mu + \mu_0}  \Ex[\breg{\X}{\x_{k'}}{\x_{k'-1}}]. \notag
    \end{align}
    Thus, under the conditions~\eqref{eq:sep:rate}, it holds that
    \begin{align*}
        \sum_{k=1}^{t-1} \Ex\sbr{\IP_k} \leq \tfrac{1}{4}\sum_{k=1}^{t} \Ex\sbr{\CP_k + \CD_k},
    \end{align*}
    which completes the proof.
\end{proof}

\section{Discussion}\label{sec:discussion}
Our discussion covers internal comparisons between full vector methods and stochastic methods, as well as external comparisons to contemporary methods for solving saddle point and variational inequality problems. We briefly comment on our chosen convergence criterion, as it may differ slightly from those used in comparisons.

\subsection{Stronger Convergence Criteria}\label{sec:discussion:sup_exp_vs_exp_sup}
The results discussed in this section are expressed in terms of \emph{global complexity} or \emph{arithmetic complexity}, which is computed by multiplying the number of iterations shown in \Crefrange{thm:nonsep:full}{thm:sep:minty} by $\tilde{O}(nd)$ for full vector update methods;
for block-wise methods with $N$ blocks of size $n/N$, we use $\tilde{O}(n(d/N + 1))$ for full updates of $\y_k$ and $\tilde{O}(n(d/N))$ for partial updates of $\y_k$. In order to effectively compare to methods using our block coordinate-wise Lipschitz and smoothness constant from \Cref{sec:preliminaries}, we will apply a particular finite sum decomposition (see~\eqref{eq:per_oracle_complexity}) that will allow us to directly apply methods from the finite sum variational inequality literature.

Importantly, in the case of randomized algorithms, the complexity in terms of the number of iterations is itself determined by the number $t$ such that the algorithm may output a point $(\tx_t, \ty_t)$ satisfying $\Ex[\gap^{\u, \v}(\tx_t, \ty_t)] \leq \epsilon$, where the expectation is taken over all algorithm randomness with $\u \in \X$ and $\v \in \Y$ fixed (i.e., $(\u, \v)$ is independent of the algorithm randomness). When $\mu > 0$, the criterion is made meaningful by setting $\u = \x_\star$ as the unique minimizer of the strongly convex objective $\x \mapsto \max_{\y \in \Y} \Lcal(\x, \y)$. Otherwise, we choose a compact set $\mc{U} \sse \X$ and consider $\sup_{\u \in \X} \Ex[\gap^{\u, \v}(\tx_t, \ty_t)] \leq \epsilon$, where $\v$ is replaced by the unique maximizer $\y_\star$ when $\nu > 0$ or another supremum is taken over $\v \in \mc{V} \sse \Y$ for $\mc{V}$ compact otherwise. As described in \citet[Example 1]{alacaoglu2022complexity}, the ``supremum of expected gap'' criterion is weaker than the ``expected supremum of gap'' criterion, as algorithms with divergent behavior can still converge according to the first criterion. We render guarantees for the stronger criterion as a technical detail, as in light of previous work, largely similar steps can be applied to achieve the same complexity guarantee for the expected supremum of gap. To not overcomplicate the proofs, we only highlight the parts of the analysis that change. Consider the argument used to derive~\eqref{eq:stoch:three_term_decomp}, in which the expectation is applied to the terms $a_k \ip{\grad \fb(\x_k)^\top \y_k - \hJb_{k-1}^\top \hyb_{k-1}, \u - \x_k}$, after which they telescope. If the supremum is to be taken, we can no longer apply the expectation to these terms directly. Instead,
\begin{align}
    a_k \langle\grad \fb(\x_k)^\top \y_k - \g_{k-1}&, \u - \x_k\rangle = a_k \ip{\grad \fb(\x_k)^\top \y_k - \hJb_{k-1}^\top \hyb_{k-1}, \u - \x_k}\notag\\
    &- \langle \underbrace{\tfrac{a_{k-1}}{p_{P_k}}\textstyle\sum_{i \in B_{P_k}} (y_{k-1, i}\grad f_{i}(\x_{k-1}) - \hy_{k-2, i} \hgb_{k-2, i})}_{\z_k}, \u - \x_{k-1} \rangle\notag\\
    &- \underbrace{a_{k-1} \ip{\tfrac{1}{p_{P_k}}\textstyle\sum_{i \in B_{P_k}} (y_{k-1, i}\grad f_{i}(\x_{k-1}) - \hy_{k-2, i} \hgb_{k-2, i}), \x_{k-1} - \x_k}}_{\EP_k},\notag
\end{align}
and by taking the expectation over $P_k$ yields the identity
\begin{align}
    a_k \ipsmall{\grad \fb(\x_k)^\top \y_k - \g_{k-1}, \u - \x_k} &=  a_k \ipsmall{\grad \fb(\x_k)^\top \y_k - \hJb_{k-1}^\top \hyb_{k-1}, \u - \x_k} \notag\\
    &\quad - a_{k-1} \ipsmall{\grad \fb(\x_{k-1})^\top \y_{k-1} - \hJb_{k-2}^\top \hyb_{k-2}, \u - \x_{k-1}}\notag\\
    &\quad - \ip{\z_k - \E{k-1}{\z_k}, \x_{k-1}}\notag\\
    &\quad - \ip{\z_k - \E{k-1}{\z_k}, \u}  - \EP_k.
\end{align}
The familiar term $\EP_k$ does not depend on $\u$ and can be bounded using the same techniques as in the proofs of \Cref{thm:stochastic:historic} and \Cref{thm:stochastic:minty}, whereas $\z_k - \E{k-1}{\z_k}$ is zero-mean conditional on $\F_{k-1}$ (i.e., a martingale difference sequence), but is not necessarily independent of $\u$. Because the supremum over $\u$ does not affect the $\ip{\z_k - \E{k-1}{\z_k}, \x_{k-1}}$ term, it will cancel with the full expectation is taken.
Next, we may apply \citet[Lemma 4]{diakonikolas2025block} (adapted from \citet[Lemma 3.5]{alacaoglu2022stochastic}) to achieve
\begin{align*}
    - \E{}{\sum_{k=1}^t \ip{\z_k - \E{k-1}{\z_k}, \u_\star}} &\leq \E{}{\breg{\X}{\u_\star}{\x_0}} + \frac{1}{2}\sum_{k=1}^t \Ex\norm{\z_k - \E{k-1}{\z_k}}_2^2\\
    &\leq \E{}{\breg{\X}{\u_\star}{\x_0}} + \frac{1}{2}\sum_{k=1}^t \Ex\norm{\z_k}_2^2,
\end{align*}
where $\u_\star$ is the element of $\u \in \mc{U}$ that achieves the supremum in the gap criterion (recalling that $\mc{U}$ is chosen to be compact). The term $\E{}{\breg{\X}{\u_\star}{\x_0}}$ is upper bounded by a constant, whereas the $\Ex\norm{\z_k}_2^2$ terms are bounded using the exact same techniques used to bound the $\EP_k$ terms. We choose to describe the argument in the manner above (as opposed to including it formally in the proofs) as it changes neither the other technical ideas nor the resulting complexities; we comment on this subtlety for the sake of completeness.

\subsection{Full Vector Update versus Stochastic Methods}
To compare methods both within this manuscript and alternatives for solving nonbilinearly coupled min-max problems, we first state some relationships between the constants introduced in \Cref{asm:smoothness}. We then proceed with fine-grained comparisons to alternatives in the existing literature on min-max optimization and monotone variational inequalities. To simplify some comparisons, we assume that $L/\mu \geq 1$ and $G/\sqrt{\mu\nu} \geq 1$, so that they may be interpreted as ``primal'' and ``mixed'' condition numbers, respectively.
First, observe that by the triangle inequality, in terms of the constants $\Gbb = (\Gb_1, \ldots, \Gb_N)$ and $\Lbb = (\Lb_1, \ldots, \Lb_N)$, the constants $G$ and $L$ can be upper bounded as
\begin{align*}
    G \leq \norm{\Gbb}_1 \text{ and } L \leq \norm{\Lbb}_1.
\end{align*}
Comparing the complexities for $\mu, \nu > 0$ in \Cref{tab:complexity_internal}, the findings are summarized as follows.

\renewcommand{\arraystretch}{1.4}
\begin{table*}[t]
\centering
    \begin{adjustbox}{max width=\linewidth}
    \begin{tabular}{ccc}
    \toprule
        {\bf Algorithm Type} & {\bf Global Complexity (big-$O$)} \\
        \midrule
        Full vector (\Cref{thm:nonsep:full}) & $nd\p{\frac{L}{\mu} + \frac{G}{\sqrt{\mu\nu}}} \ln\p{\frac{1}{\epsilon}}$\\
        Stochastic (\Cref{thm:stochastic:historic}) & $\p{\frac{nd}{N}+n}\p{N + \frac{\sqrt{N}\norm{\lambb}_1}{\mu} + \frac{\sqrt{N}\norm{\lambb}_1^{1/2}\norm{\Gbb}_1^{1/2}}{\sqrt{\mu\nu}}}\ln\p{\frac{1}{\epsilon}}$\\
        Stochastic (\Cref{thm:stochastic:minty}) & $\p{\frac{nd}{N}+n}\p{N + \frac{\norm{\lambb}_{1/2}^{3/4} \norm{\Lbb}_{1/2}^{1/4}}{\mu} + \frac{\norm{\lambb}_{1/2}^{1/2} \norm{\Gbb}_{1/2}^{1/2}}{\sqrt{\mu\nu}}} \ln\p{\frac{1}{\epsilon}}$\\        
        Block Coordinate-wise (\Cref{thm:sep:historic}) & $\frac{nd}{N}\p{N + \frac{\sqrt{N}\norm{\lambb}_1^{1/2}\norm{\Lbb}_1^{1/2}}{\mu} + \frac{\sqrt{N}\norm{\lambb}_1^{1/2}\norm{\Gbb}_\infty^{1/2}}{\sqrt{\mu\nu}}}\ln\p{\frac{1}{\epsilon}}$\\
        Block Coordinate-wise (\Cref{thm:sep:minty}) & $\frac{nd}{N}\p{N + \frac{\norm{\lambb}_{1/2}^{1/4}  \norm{\Lbb}_{1/2}^{3/4}}{\mu} + \frac{\sqrt{N}\norm{\lambb}_{1/2}^{1/4} \norm{\Gbb}_\infty^{3/4}}{\sqrt{\mu\nu}}}\ln\p{\frac{1}{\epsilon}}$\\
        \bottomrule
    \end{tabular}
    \end{adjustbox}
    \vspace{6pt}
\caption{{\bf Complexity Bounds for Full Vector and Stochastic Methods for the Case $\mu, \nu > 0$.} Arithmetic or global complexity (i.e., the total number of elementary operations required to compute $(\x, \y)$ satisfying $\E{}{\gap^{\u, \v}(\x, \y) \leq \epsilon}$ for fixed $(\u, \v) \in \X \times \Y$, with the expectation taken over all algorithmic randomness. 
}
\label{tab:complexity_internal}
\end{table*}

\begin{remark}
    We observe the following in \Cref{tab:complexity_internal}.
    \begin{itemize}
        \item We highlight a limitation in the historical regularization method (\Cref{thm:stochastic:historic} and \Cref{thm:sep:historic}) as a factor $\sqrt{N}$ is gained in terms of the dependence on $\Gbb$ and $\Lbb$, which is not observed for the full vector method from \Cref{thm:nonsep:full}. When comparing  \Cref{thm:stochastic:historic} to \Cref{thm:nonsep:full}, the dependence on the smoothness constants renders as $L$ versus $\norm{\lambb}_1/\sqrt{N}$ and on the Lipschitz constants as $G$ versus $\norm{\lambb}_1^{1/2}\norm{\Gbb}_1^{1/2}/\sqrt{N}$. In both cases, when the constants are highly non-uniform, we are still afforded an up to $\sqrt{n}$ improvement in complexity from the stochastic method. Note that \Cref{thm:sep:historic} is a strict improvement over \Cref{thm:stochastic:historic} due to separability.
        \item On the other hand, in the highly non-uniform setting, we may gain an up to $d$ factor (resp.~$\sqrt{n}$ factor) of improvement in terms of complexity using the methods of \Cref{thm:stochastic:minty} (resp.~\Cref{thm:sep:minty}) over the full vector method. The results of \Cref{thm:stochastic:historic} and \Cref{thm:stochastic:minty} (and by analogy, the results of  \Cref{thm:sep:historic} and \Cref{thm:sep:minty}) do not have a uniformly dominating method, as the extra factor of $\sqrt{N}$ may be on par with the improvement of the $\norm{\cdot}_1$ norm over the $\norm{\cdot}_{1/2}$ in terms of the dependence on $(\Gbb, \Lbb, \lambb)$.
    \end{itemize}
\end{remark}
\edit{While focusing on the $\mu > 0, \nu > 0$ case in the main text, we also include the tabulated results for the non-strongly convex cases in \Cref{sec:a:tables}.}

\subsection{Alternative Methods for Min-Max and Variational Inequality Problems}
In some comparisons, we must access the smoothness constants of the function $(\x, \y) \mapsto \Lcal(\x, \y)$ both with respect to $\x$ and $\y$ separately, and additionally, with respect to the pair $(\x, \y)$. 
For the sake of comparison, assume that $\phi$ and $\psi$  are differentiable and 
\begin{align*}
    \norm{\grad \phi(\x) - \grad \phi(\x')}_{\X^*} \leq \mu \norm{\x - \x'}_\X \text{ and } \norm{\grad \psi(\y) - \grad \psi(\y')}_{\Y^*} \leq \nu \norm{\y - \y'}_\Y.
\end{align*}
Then, the following relations hold:
\begin{align}
    \sup_{\y \in \Y} \norm{\grad_{\x} \Lcal(\x, \y) - \grad_{\x'} \Lcal(\x', \y)}_{\X^*} &\leq (L + \mu) \norm{\x - \x'}_\X, \label{eq:smth:x} \\
    \sup_{\x \in \X} \norm{\grad_{\y} \Lcal(\x, \y) - \grad_{\y'} \Lcal(\x, \y')}_{\Y^*} &\leq \nu \norm{\y - \y'}_\Y, \notag \\
    \sup_{\y \in \Y} \norm{\grad_{\y} \Lcal(\x, \y) - \grad_{\y} \Lcal(\x', \y)}_{\Y^*} &\leq G \norm{\x - \x'}_\X, \notag \\
    \sup_{\x \in \X} \norm{\grad_{\x} \Lcal(\x, \y) - \grad_{\x} \Lcal(\x, \y')}_{\X^*} &\leq G \norm{\y - \y}_\Y. \notag
\end{align}
As is done in the case of the $\ell_2$-norm, define the norms
\begin{align*}
    \norm{(\x, \y)}^2 := \norm{\x}_\X^2 + \norm{\y}_\Y^2 \text{ and } \norm{\grad \Lcal(\x, \y)}_*^2 := \norm{\grad_{\x} \Lcal(\x, \y)}_{\X^*}^2 + \norm{\grad_{\y} \Lcal(\x, \y)}_{\Y^*}^2.
\end{align*}
Then, we finally have  that
\begin{align}
    \norm{\grad \Lcal(\x, \y) - \grad \Lcal(\x', \y')}_* \leq \sqrt{3\max\br{L^2 + G^2 + \mu^2, G^2 + \nu^2}}\norm{(\x, \y) - (\x', \y')}.\label{eq:smth:xy}
\end{align}
\begin{remark}
    We do \emph{not} assume that $\phi$ and $\psi$ are smooth in our convergence guarantees, as the individual components $\phi$ and $\psi$ are allowed to be non-differentiable in our framework. The additional assumptions above are made only for the sake of comparison. To this end, we will assume that $\nu \leq G$ (and maintain $\mu \leq L$). Then the constants in~\eqref{eq:smth:x} and~\eqref{eq:smth:xy} can be simplified to absolute constants times $L$ and $\sqrt{L^2 + G^2}$, respectively. Thus, we define the vector field $(\x, \y) \mapsto \grad \Lcal(\x, \y)$, which is $C\sqrt{L^2 + G^2}$-Lipschitz (for an absolute constant $C > 0$) and $(\mu \wedge \nu)$-strongly monotone.
\end{remark}
Given the above,  we use these constants to compare to algorithms designed for solving variational inequality problems. In the upcoming remarks, we discuss the convex-concave (monotone) and strongly convex-strongly concave (strongly monotone) settings. First, consider the case in which $\mu \wedge \nu > 0$, so that we may observe the dependence on all problem constants in the full vector update settings.
\begin{remark}
    We observe the following in \Cref{tab:complexity_full}.
    \begin{itemize}
        \item When using classical methods for monotone variational inequalities, such as dual extrapolation \citep{nesterov2006solving}, we highlight the $(\mu \wedge \nu)$ term, which may suffer when there is a large asymmetry in the strong convexity and strong concavity constants. As emphasized in \Cref{sec:intro}, $\nu$ is often chosen as a small approximation or smoothing parameter, meaning that the theoretical complexity will not necessarily improve for large values of the primal strong convexity constant $\mu$. Observe that all other methods will improve with increasing $\mu$.
        \item In recent works \citep{jin2022sharper, li2023nesterov} and ours, the Lipschitz constants from different components of the objective function are separated in the complexity. In particular, the constants $L$ and $\nu$ are decoupled, which is especially helpful in scenarios in which $L$ is much larger than $G$. Examples include losses that are themselves smooth approximations of non-smooth losses, such as Huber approximations of the mean absolute error function. We improve over \citet{jin2022sharper, li2023nesterov}, which are designed for general nonbilinearly-coupled objectives, by a logarithmic factor in iteration complexity and achieve the same result in global complexity. 
    \end{itemize}
\end{remark}

\renewcommand{\arraystretch}{1.4}
\begin{table*}[t]
\centering
    \begin{adjustbox}{max width=\linewidth}
    \begin{tabular}{cc}
    \toprule
        {\bf Reference} & {\bf Global Complexity} \\
        \midrule
        \citet[Theorem 3]{nesterov2006solving}
        &
        $O\p{nd\frac{\sqrt{L^2 + G^2}}{\mu \wedge \nu} \ln (1/\epsilon)}$
        \\
        \midrule
        \citet[Theorem 3]{wang2020improved}
        &
        $\tilde{O}\p{nd\sqrt{\frac{L}{\mu} + \frac{G \cdot \max\br{L, G}}{\mu \nu}} \ln (1/\epsilon)}$
        \\
        \midrule
        \begin{tabular}{c}
             \citet[Theorem 9]{lin2020near}\\
             \citet[Theorem 3]{borodich2024nearoptimal}
        \end{tabular}
        &
        $\tilde{O}\p{nd\sqrt{\frac{L^2 + G^2}{\mu \nu}} \ln(1/\epsilon)}$
        \\
        \midrule
        \begin{tabular}{c}
             \citet[Theorem 3]{carmon2022recapp}\\
             \citet[Theorem 4.2]{lan2023novel}
        \end{tabular}
        &
        $O\p{nd\sqrt{\frac{L^2 + G^2}{\mu \nu}} \ln(1/\epsilon)}$
        \\
        \midrule
        \citet[Theorem 3]{kovalev2022thefirst}
        &
        $O\p{nd\sqrt{\frac{L^2 + G^2}{\mu (\mu \wedge \nu)}} \ln(1/\epsilon)}$
        \\
        \midrule
        \begin{tabular}{c}
             \citet[Theorem 1]{jin2022sharper} \\
             \citet[Corollary 3.4]{li2023nesterov}
        \end{tabular}
        &
        $\tilde{O}\p{nd\p{\frac{L}{\mu} + \frac{G}{\sqrt{\mu \nu}}}\ln(1/\epsilon)}$
        \\
        \midrule
        {\bf This work (\Cref{thm:nonsep:full})}
        &
        $O\p{nd\p{\frac{L}{\mu} + \frac{G}{\sqrt{\mu \nu}}}\ln(1/\epsilon)}$
        \\
        \bottomrule
    \end{tabular}
    \end{adjustbox}
    \vspace{6pt}
\caption{{\bf Complexity Bounds for \blue{General Nonbilinarly-Coupled} Objectives for $\mu, \nu > 0$. } Runtime or global complexity (i.e.~the total number of elementary operations required to compute $(\x, \y)$ satisfying $\gap^{\u, \v}(\x, \y) \leq \epsilon$ for fixed $(\u, \v) \in \X \times \Y$. The methods considered call the entire list of primal first-order oracles $(f_j, \grad f_j)$ for $j = 1, \ldots, n$ on each iteration. The method of \citet[Corollary 1]{kovalev2022thefirst} achieves its claim by swapping the role of $\x$ and $\y$, which is not possible for~\eqref{eq:semilinear}. \citet[Theorem 3]{carmon2022recapp} requires a bounded diameter assumption on $\Y$, which is generally required for $\Lb_1, \ldots, \Lb_N$ to be finite if each of $f_1, \ldots, f_n$ is nonlinear.}
\label{tab:complexity_full}
\end{table*}

When viewed as a saddle point or variational inequality problem, notice that~\eqref{eq:semilinear} has a finite sum structure. In order to make direct comparisons, we decompose the objective block-wise, that is, $\Lcal(\x, \y) = \sum_{J=1}^N \Lcal_J(\x, \y)$, where
\begin{align}
    \Lcal_J(\x, \y) = \begin{cases}
        \sum_{j \in B_J} y_j f_j(\x) - \psi_J(\y_J) + \frac{1}{N}\phi(\x) & \text{ if \Cref{def:separable} (separability) is satisfied}\\
        \sum_{j \in B_J} y_j f_j(\x) - \frac{1}{N}\psi(\y) + \frac{1}{N}\phi(\x) & \text{ otherwise}
    \end{cases}. \label{eq:per_oracle_complexity}
\end{align}
Thus, when comparing to methods designed for finite sum objectives, we may consider the overall complexity of querying the oracle $(\Lcal_J, \grad \Lcal_J)$ to be $O(nd/N)$ if the objective is separable or $O(n(d/N + 1))$ if it is non-separable. Even if $(\Lcal_J, \grad \Lcal_J)$ is of cost $O(nd/N)$ to compute, each oracle call may be associated with a single step of the algorithm, which is still $O(n)$ if it updates the primal and dual variables in their entirety. This is the first consideration when computing the global complexities in \Cref{tab:finite_sum_cvx} and \Cref{tab:finite_sum_str_cvx}. For the second consideration, we compute the Lipschitzness and smoothness assumption of the individual component functions \emph{on average} with uniform or non-uniform sampling, as is commonly used in analyses of methods for sum-decomposable objectives. 
Many contemporary results are stated in terms of ``on average'' smoothness (for min-max problems) and Lipschitzness (for variational inequality problems). Using the same norms defined in~\eqref{eq:smth:xy}, we say that $\Lcal_1, \ldots, \Lcal_N$ are \emph{$\Lavg$-smooth on average} according to sampling weights $\pb = (p_1, \ldots, p_N)$ if 
\begin{align*}
    \Ex_{J \sim \pb}\norm{(1/p_J)\p{\grad \Lcal_J(\x, \y) - \grad \Lcal_J(\x', \y')}}_*^2 &\leq \Lavg^2\norm{(\x, \y) - (\x', \y')}^2.
\end{align*}
Recall the constants $\lambb = (\lamb_1, \ldots, \lamb_N)$, where $\lamb_J = \sqrt{\Gb_J^2 + \Lb_J^2}$ are the Lipschitz constants of each $\Lcal_J$ with respect to the norm $\norm{\cdot}$ defined above.
The prototypical sampling schemes are the uniform and importance-weighted schemes
\begin{align*}
    \Ex_{J \sim \mathrm{unif}[N]}\norm{n\p{\grad \Lcal_J(\x, \y) - \grad \Lcal_J(\x', \y')}}_*^2 &\leq \underbrace{N\norm{\lambb}_2^2}_{\Lunif^2}\norm{(\x, \y) - (\x', \y')}^2\\
    \Ex_{J \sim \lambb}\norm{(1/\lamb_J)\p{\grad \Lcal_J(\x, \y) - \grad \Lcal_J(\x', \y')}}_*^2 &\leq \underbrace{\norm{\lambb}_1^2}_{\Limp^2}\norm{(\x, \y) - (\x', \y')}^2.
\end{align*}
Note that under the same sampling scheme, $\Lavg$-smoothness on average is also implied by $(1/\Lavg)$-cocoercivity on average \citep[Assumption 3]{cai2024variance}. Finally, for $\epsilon \searrow 0$, we have that $O\p{N + {\sqrt{N} \Lunif}/{\epsilon}} = O\p{{\sqrt{N} \Lunif}/{\epsilon}}$ and $O\p{N + {\sqrt{N} \Limp}/{\epsilon}} =   O\p{{\sqrt{N} \Limp}/{\epsilon}}$ which, combined with the oracle cost for~\eqref{eq:per_oracle_complexity} , leads to the results in \Cref{tab:finite_sum_cvx} and \Cref{tab:finite_sum_str_cvx}.

\begin{table*}[t]
\centering
\begin{adjustbox}{max width=\linewidth}
\begin{tabular}{ccc}
\toprule
    {\bf Reference}& {\bf Additional Structure} & {\bf Global Complexity (big-$\tilde{O})$} \\
    \midrule
    \begin{tabular}{c}
         \citet[Corollary 6]{alacaoglu2022stochastic}\\
         \citet[Theorem 4.2]{cai2024variance}\\
    \end{tabular}        
    &
    Constants known
    &
    $\frac{n(d+N)}{\sqrt{N}\epsilon}\norm{\lambb}_1$
    \\
    \midrule
    \begin{tabular}{c}
         \citet[Corollary 6]{alacaoglu2022stochastic}\\
         \citet[Theorem 4.2]{cai2024variance}\\
         \citet[Corollary 1]{pichugin2024method}
    \end{tabular}        
    &
    &
    $\frac{n\p{d + N}}{\epsilon}\norm{\lambb}_2$
    \\
    \midrule
    \citet[Thm. 1 \& Eq. (38)]{diakonikolas2025block}
    &
    \begin{tabular}{c}
         Constants known \\
         + Separable 
    \end{tabular}
    &
    \begin{tabular}{c}
         $\frac{n(d + N)}{N\epsilon}\norm{\lambb}_{1/2}$ \\
         $\frac{nd}{N\epsilon}\norm{\lambb}_{1/2}$ 
    \end{tabular}
    \\
    \midrule
    \citet[Thm. 1 \& Eq. (36)]{diakonikolas2025block}
    &
    \begin{tabular}{c}
         \quad \\
         + Separable 
    \end{tabular}
    &
    \begin{tabular}{c}
         $\frac{n(d+N)}{\epsilon}\sqrt{N}\norm{\lambb}_{2}$ \\
         $\frac{nd}{\epsilon}\sqrt{N}\norm{\lambb}_{2}$
    \end{tabular}
    \\
    \midrule
    \begin{tabular}{c}
         {\bf This work(\Cref{thm:stochastic:historic})}\\
         {\bf This work (\Cref{thm:sep:historic})}
    \end{tabular}
    &
    \begin{tabular}{c}
         Constants known\\
         + Separable
    \end{tabular}  
    &
    \begin{tabular}{c}
         $\frac{n(d+N)}{\edit{\sqrt{N}}\epsilon}\p{\norm{\lambb}_{1}^{1/2} (\norm{\lambb}_{1}^{1/2} +  \norm{\Gbb}_{1}^{1/2})}$\\
         $\frac{nd}{\sqrt{N}\epsilon}\p{\norm{\lambb}_1^{1/2}(\norm{\Lbb}_1^{1/2} + \norm{\Gbb}_\infty^{1/2})}$\\
    \end{tabular}
    \\
    \midrule
    
    \begin{tabular}{c}
         {\bf This work (\Cref{thm:stochastic:minty})}\\
         {\bf This work (\Cref{thm:sep:minty})}
    \end{tabular}
    &
    \begin{tabular}{c}
         Constants known\\
         + Separable
    \end{tabular}  
    &
    \begin{tabular}{c}
         $\frac{n(d+N)}{N\epsilon}\p{\norm{\lambb}_{1/2}^{1/2} (\norm{\lambb}_{1/2}^{1/4}  \norm{\Lbb}_{1/2}^{1/4} + \norm{\Gbb}_{1/2}^{1/2})}$\\
         $\frac{nd}{N\epsilon}\p{\norm{\lambb}_{1/2}^{1/4} ( \norm{\Lbb}_{1/2}^{3/4} + \sqrt{N}\norm{\Gbb}_\infty^{3/4})}$\\
    \end{tabular}
    \\
    \bottomrule
\end{tabular}

\end{adjustbox}
\vspace{6pt}
\caption{{\bf Complexity Bounds for \blue{Convex-Concave Finite-Sum} Objectives. } Arithmetic or global complexity (i.e., the total number of elementary operations required to compute $(\x, \y)$ satisfying $\Ex[\gap^{\u, \v}(\x, \y)] \leq \epsilon$ for fixed $(\u, \v) \in \X \times \Y$. We use $\lambb$ as defined in \Cref{sec:preliminaries}. The objective is assumed to be {\bf convex-concave} and have a finite sum structure $\Lcal(\x, \y) = \sum_{J=1}^N \Lcal_J(\x, \y)$. The expectation is taken over any randomness incurred by the algorithm.}\label{tab:finite_sum_cvx}
\end{table*}

\begin{remark}
    We observe the following in \Cref{tab:finite_sum_cvx} and \Cref{tab:finite_sum_str_cvx}.
    \begin{itemize}
        \item For the improved dependence on the problem constants $(\Gbb, \Lbb)$ and strong convexity constants $(\mu, \nu)$ there are two main themes, which both involve ``decoupling'' of the two constants. Indeed, all results except for ours depend only on the aggregate Lipschitz constants $\lambb$ and the minimum of the strong convexity constants $(\mu \wedge \nu)$, whereas we may (partially) separate these into dependences on ``$\Lbb$ over $\mu$'' terms and ``$\Gbb$ over $\sqrt{\mu\nu}$'' terms.
        \item In \Cref{tab:finite_sum_cvx}, without considering the differences between the aggregate constants $\lambb$ and decoupled constants $(\Gbb, \Lbb)$, we notice \edit{that historical regularization (\Cref{thm:stochastic:historic}) performs on par with the known constants variant of \citet{alacaoglu2022stochastic} and \citet{cai2024variance}. On the other hand, non-uniform block replacement (\Cref{thm:stochastic:minty}) may improve by a factor of $\sqrt{N}$.
        }
        \item In \Cref{tab:finite_sum_cvx} for the non-separable case, \edit{the $\sqrt{N}$ times $\ell_1$-norm scaling of \Cref{thm:stochastic:historic} may improve upon the $\ell_{1/2}$-norm scaling of \citet{diakonikolas2025block} for more uniformly spread problem constants.}
    \end{itemize}
\end{remark}

\begin{table*}[t]
\centering
\begin{adjustbox}{max width=\linewidth}
\begin{tabular}{ccc}
\toprule
    {\bf Reference}& {\bf Additional Structure} & {\bf Global Complexity (big-$\tilde{O})$} \\
    \midrule
    \citet[Theorem 2]{palaniappan2016stochastic}
    &
    Constants known
    &
    $\frac{n(n+d)}{N}\p{N + \frac{L^2 + G^2}{(\mu \wedge \nu)^2}}\ln(1/\epsilon)$
    \\
    \midrule
    \begin{tabular}{c}
        \citet[Corollary 27]{alacaoglu2022stochastic}\\
        \citet[Theorem 4.6]{cai2024variance}
    \end{tabular}
    &
    Constants known
    &
    $\frac{n(d+N)}{N}\p{N + \frac{\sqrt{N}\norm{\lambb}_1}{\mu \wedge \nu}}\ln\p{\frac{1}{\epsilon}}$
    \\
    \midrule
   \begin{tabular}{c}
        \citet[Corollary 27]{alacaoglu2022stochastic}\\
        \citet[Theorem 4.6]{cai2024variance}
    \end{tabular}
    &
    
    &
    $\frac{n(d+N)}{N}\p{N + \frac{N\norm{\lambb}_2}{\mu \wedge \nu}}\ln\p{\frac{1}{\epsilon}}$
    \\
    \midrule
    \citet[Thm. 1 \& Eq. (38)]{diakonikolas2025block} 
    &
    \begin{tabular}{c}
         Constants known \\
         + Separable 
    \end{tabular}
    &
    \begin{tabular}{c}
        $\frac{n(d+N)}{N}\p{N + \frac{\norm{\lambb}_{1/2}}{\mu \wedge \nu}} \ln\p{\frac{1}{\epsilon}}$ \\
        $\frac{nd}{N}\p{N + \frac{\norm{\lambb}_{1/2}}{\mu \wedge \nu}} \ln\p{\frac{1}{\epsilon}}$
    \end{tabular}
    \\
    \midrule
    \citet[Thm. 1 \& Eq. (36)]{diakonikolas2025block} 
    &
    \begin{tabular}{c}
         \quad \\
         + Separable 
    \end{tabular}
    &
    \begin{tabular}{c}
        $\frac{n(d+N)}{N}\p{N + \frac{N^{3/2}\norm{\lambb}_{2}}{\mu \wedge \nu}} \ln\p{\frac{1}{\epsilon}}$ \\
        $\frac{nd}{N}\p{N + \frac{N^{3/2}\norm{\lambb}_{2}}{\mu \wedge \nu}} \ln\p{\frac{1}{\epsilon}}$ 
    \end{tabular}
    \\
    \midrule
    \begin{tabular}{c}
         {\bf This work(\Cref{thm:stochastic:historic})}\\
         {\bf This work (\Cref{thm:sep:historic})}
    \end{tabular}
    &
    \begin{tabular}{c}
         Constants known\\
         + Separable
    \end{tabular}  
    &
    \begin{tabular}{c}
         $\frac{n(d+N)}{N}\p{N + \frac{\sqrt{N}\norm{\lambb}_1}{\mu} + \frac{\sqrt{N}\norm{\lambb}_1^{1/2}\norm{\Gbb}_1^{1/2}}{\sqrt{\mu\nu}}}\ln\p{\frac{1}{\epsilon}}$\\
         $\frac{nd}{N}\p{N + \frac{\sqrt{N}\norm{\lambb}_1^{1/2}\norm{\Lbb}_1^{1/2}}{\mu} + \frac{\sqrt{N}\norm{\lambb}_1^{1/2}\norm{\Gbb}_\infty^{1/2}}{\sqrt{\mu\nu}}}\ln\p{\frac{1}{\epsilon}}$\\
    \end{tabular}
    \\
    \midrule
    \begin{tabular}{c}
         {\bf This work (\Cref{thm:stochastic:minty})}\\
         {\bf This work (\Cref{thm:sep:minty})}
    \end{tabular}
    &
    \begin{tabular}{c}
         Constants known\\
         + Separable
    \end{tabular}  
    &
    \begin{tabular}{c}
         $\frac{n(d+N)}{N}\p{N +\frac{\norm{\lambb}_{1/2}^{3/4} \norm{\Lbb}_{1/2}^{1/4}}{\mu} + \frac{\norm{\lambb}_{1/2}^{1/2} \norm{\Gbb}_{1/2}^{1/2}}{\sqrt{\mu\nu}}} \ln\p{\frac{1}{\epsilon}}$\\
         $\frac{nd}{N}\p{N + \frac{\norm{\lambb}_{1/2}^{1/4}  \norm{\Lbb}_{1/2}^{3/4}}{\mu} + \frac{\edit{\sqrt{N}}\norm{\lambb}_{1/2}^{1/4} \norm{\Gbb}_\infty^{3/4}}{\sqrt{\mu\nu}}}\ln\p{\frac{1}{\epsilon}}$\\
    \end{tabular}
    \\
    \bottomrule
\end{tabular}

\end{adjustbox}
\vspace{6pt}
\caption{{\bf Complexity Bounds for \blue{Strongly Convex-Strongly Concave Finite-Sum} Objectives}: Arithmetic or global complexity (i.e., the total number of elementary operations required to compute $(\x, \y)$ satisfying $\Ex[\gap^{\u, \v}(\x, \y)] \leq \epsilon$ for fixed $(\u, \v) \in \X \times \Y$. We use $\lambb$ as defined in \Cref{sec:preliminaries}. The objective is assumed to be {\bf $(\mu, \nu)$-strongly convex-strongly concave} and have a finite sum structure $\Lcal(\x, \y) = \sum_{J=1}^N \Lcal_J(\x, \y)$. The expectation is taken over any randomness in the algorithm.}\label{tab:finite_sum_str_cvx}
\end{table*}
We also mention the work of \citet{boob2024optimal}, which solves a functionally constrained variational inequality formulation akin to Example 3 from \Cref{sec:intro}. They operate under a completely different set of assumptions, largely to handle the possible unboundedness of the domain $\Y$ of the Lagrange multipliers. Furthermore, the gradient operator and the functional constraints satisfy non-standard deviation control (as opposed to Lipschitzness) inequalities (see \citet[Eq. (1.3) and (1.4)]{boob2024optimal}, and thus this work is not directly comparable to ours.

\subsection{Bilinearly Coupled Problems}
While originally motivated by nonbilinearly-coupled min-max problems, the bilinearly-coupled setting constitutes an important special case of~\eqref{eq:semilinear}, defined via $\fb(\x) = \A \x$ for $\A \in \R^{n \times d}$. For the sake of discussion, we consider the convex-concave case (which includes matrix games for which $\phi \equiv 0$ and $\psi \equiv 0$).
Randomized algorithms (such as randomized mirror-prox) for such problems were explored in \citet{juditsky2011first, juditsky2013randomized} to achieve complexity guarantees of the form $O(\sigma^2\epsilon^{-2} + \norm{\A}_{\X, \Y^*}\epsilon^{-1})$, where $\sigma^2$ is a measurement of noise arising from using stochastic estimates of matrix-vector multiplications and $\norm{\A}_{\X, \Y^*} = \sup\br{\norm{\A\x}_{\Y^*}: \norm{\x}_\X = 1}$ is the induced matrix norm. 
For general problems, the approaches of \citet{chambolle2011afirstorder, alacaoglu2022stochastic} achieve
\begin{align*}
    \min\br{O\p{nd\norm{\A}_{2, 2}\epsilon^{-1}}, O\p{nd + \sqrt{nd(n+d)\norm{\A}_{\Fro}\epsilon^{-1}}}}
\end{align*}
whereas these are reduced by \citet{song2021variance} and \citet{alacaoglu2022complexity}\footnote{Guarantees might be for the expected supremum of gap or supremum of expected gap. We compare them side-by-side due to the argument in the earlier part of the section.} to
\begin{align}
    O\p{nd + d \frac{n\max_{j} \norm{\A_{j\cdot}}_2}{\epsilon}},\label{eq:dense}
\end{align}
when the objective function is \emph{separable}, where $\A_{i \cdot}$ denotes the $i$-th row of $\A$. In our notation, $\Gc_i = \norm{\A_{i\cdot}}_\infty$ when we equip $\X$ with the $\ell_1$-norm.
Thus, due to separability, we may apply \Cref{thm:sep:historic} with $N = n$ to achieve a global complexity of 
\begin{align}
    O\p{nd + d\frac{\sqrt{(\sum_{i=1}^n \norm{\A_{i\cdot}}_\infty) \cdot (n\max_j \norm{\A_{j\cdot}}_\infty)} }{\epsilon}}. \label{eq:dense_ours}
\end{align}
Because
\begin{align*}
    n\textstyle\max_{j} \norm{\A_{j\cdot}}_2 \overset{(\circ)}{\geq} n\textstyle\max_{j} \norm{\A_{j\cdot}}_\infty  \overset{(*)}{\geq} \sum_{i=1}^n \norm{\A_{i\cdot}}_\infty,
\end{align*}
and $(\circ)$ offers an up to a $\sqrt{d}$-factor improvement and $(*)$ offers an up to $n$-factor improvement, our result can offer up to an order-$\sqrt{nd}$ improvement overall. This improvement is realized when within-row entries are highly uniform and within-column entries are highly non-uniform, leading to highly non-uniform infinity norms of the rows.
For linearly constrained problems, \citet{alacaoglu2022complexity} achieve $O\p{nd + {d\sum_{i=1}^n \norm{\A_{i\cdot}}_2}/{\epsilon}}$. It is relevant to note that our improvement relies heavily on the non-uniform sampling and non-uniform historical regularization applied in \Cref{sec:stochastic:historic} and \Cref{sec:sep:historic}. Non-uniform sampling has been applied in the works above as well as earlier works such as \citet{alacoglu2017smooth} and \citet{chambolle2018stochastic}, but our findings show that sampling strategies may not be sufficient to remove the extraneous dimension factors in the iteration complexity.

\subsection{Conclusion}
In this paper, we provided a class of algorithms for dual-linear min-max problems under separability and non-separability assumptions.
A primary direction for further investigation would be establishing lower bounds for both the full vector and stochastic case, following recent trends in the saddle-point optimization literature \citep{Zhang2022OnLower}. In particular, achieving lower bounds that 1) account for non-smooth uncoupled components $\phi$ and $\psi$ and 2) depend on the individual Lipschitz and smoothness parameters of $f_1, \ldots, f_n$, would be of interest. This contrasts current results, which depend on the smoothness and strong convexity constants of the entire objective $(\x, \y) \mapsto \Lcal(\x, \y)$ jointly in both the primal and dual vectors. %
On the side of upper bounds, one may pursue complexity guarantees that fully decouple the Lipschitz and smoothness constants of each objective. For instance, \Cref{thm:stochastic:historic} has a ``partially decoupled'' complexity which depends on the constant $\norm{\lambb}_{1}^{1/2} (\norm{\lambb}_{1}^{1/2} +  \norm{\Gbb}_{1}^{1/2})$, which is upper bounded by $\norm{\lambb}_{1}$ (the constant associated with a generic stochastic variational inequality method) and lower bounded by $\norm{\Lbb}_{1} +  \norm{\Gbb}_{1}$. Accordingly, we ask whether a complexity dependent on $\norm{\Lbb}_{1} +  \norm{\Gbb}_{1}$ can be achieved, with analogous results for strongly convex-strongly concave settings.

On a practical note, we also raise the possibility of extending our algorithm class to parameter-free or adaptive variants. For instance, inspired by the golden ratio algorithm of \citet{malitsky2020golden}, an ``auto-conditioned'' variant of PDHG was recently proposed \citep{lan2024autoconditioned}. This approach employs not only extrapolated gradient estimates, but also an additional sequence of extrapolated prox-centers; these alternative prox-centers may bear similarity to the historical regularization technique described in \Cref{sec:stochastic:historic} and \Cref{sec:sep:historic}. These techniques, developed primarily for bilinearly coupled min-max problems, avoid computation of the spectral norm of the coupling matrix $\A$ (analogous to adaptively estimating $\Gb_1, \ldots, \Gb_N$ in our setup). \citet{fercoq2024monitoring} also proposes techniques for adaptive step size selection in PDHG, using a spectral norm estimation technique. Given that PDHG is a conceptual predecessor of our algorithm, we hypothesize that similar techniques may be employed in the dual-linear setting. We leave these directions as interesting avenues for future research.

\paragraph{Acknowledgements}
RM was supported by NSF DMS-2023166, CCF-2019844, and DMS-2134012. 
JD was supported in part by the Air Force Office of Scientific Research under award number FA9550-24-1-0076, by the U.S.\ Office of Naval Research under contract number  N00014-22-1-2348, and by the NSF CAREER Award CCF-2440563. ZH was supported by NSF CCF-2019844, DMS-2134012, NIH, and IARPA 2022-22072200003. Part of this work was performed while RM and ZH were visiting the Simons Institute for the Theory of Computing. 
Any opinions, findings and conclusions or recommendations expressed in this material are those of the author(s) and do not necessarily reflect the views of the U.S.\ Department of Defense.

\bibliographystyle{abbrvnat}
\bibliography{bib}

\begin{thebibliography}{78}
\providecommand{\natexlab}[1]{#1}
\providecommand{\url}[1]{\texttt{#1}}
\expandafter\ifx\csname urlstyle\endcsname\relax
  \providecommand{\doi}[1]{doi: #1}\else
  \providecommand{\doi}{doi: \begingroup \urlstyle{rm}\Url}\fi

\bibitem[Agarwal et~al.(2018)Agarwal, Beygelzimer, Dudik, Langford, and
  Wallach]{agarwal2018areductions}
A.~Agarwal, A.~Beygelzimer, M.~Dudik, J.~Langford, and H.~Wallach.
\newblock {A Reductions Approach to Fair Classification}.
\newblock In \emph{ICML}, 2018.

\bibitem[Alacaoglu and Malitsky(2022)]{alacaoglu2022stochastic}
A.~Alacaoglu and Y.~Malitsky.
\newblock {Stochastic Variance Reduction for Variational Inequality Methods}.
\newblock In \emph{COLT}, 2022.

\bibitem[Alacaoglu et~al.(2017)Alacaoglu, Tran~Dinh, Fercoq, and
  Cevher]{alacoglu2017smooth}
A.~Alacaoglu, Q.~Tran~Dinh, O.~Fercoq, and V.~Cevher.
\newblock {Smooth Primal-Dual Coordinate Descent Algorithms for Nonsmooth
  Convex Optimization}.
\newblock In \emph{NeurIPS}, 2017.

\bibitem[Alacaoglu et~al.(2020)Alacaoglu, Fercoq, and
  Cevher]{alacaoglu2020random}
A.~Alacaoglu, O.~Fercoq, and V.~Cevher.
\newblock {Random extrapolation for primal-dual coordinate descent}.
\newblock In \emph{ICML}, 2020.

\bibitem[Alacaoglu et~al.(2022{\natexlab{a}})Alacaoglu, Cevher, and
  Wright]{alacaoglu2022complexity}
A.~Alacaoglu, V.~Cevher, and S.~J. Wright.
\newblock {On the Complexity of a Practical Primal-Dual Coordinate Method},
  2022{\natexlab{a}}.
\newblock ArXiv Preprint.

\bibitem[Alacaoglu et~al.(2022{\natexlab{b}})Alacaoglu, Fercoq, and
  Cevher]{alacaoglu2022ontheconvergence}
A.~Alacaoglu, O.~Fercoq, and V.~Cevher.
\newblock {On the Convergence of Stochastic Primal-Dual Hybrid Gradient}.
\newblock \emph{SIAM Journal on Optimization}, 2022{\natexlab{b}}.

\bibitem[Alizadeh et~al.(2024)Alizadeh, Hamedani, and
  Jalilzadeh]{alizadeh2024variance}
Z.~Alizadeh, E.~Y. Hamedani, and A.~Jalilzadeh.
\newblock {Variance-reduction for Variational Inequality Problems with Bregman
  Distance Function}, 2024.

\bibitem[Applegate et~al.(2025)Applegate, D{\'\i}az, Hinder, Lu, Lubin,
  O'Donoghue, and Schudy]{applegate2025pdlp}
D.~Applegate, M.~D{\'\i}az, O.~Hinder, H.~Lu, M.~Lubin, B.~O'Donoghue, and
  W.~Schudy.
\newblock {{PDLP}: A Practical First-Order Method for Large-Scale Linear
  Programming}, 2025.
\newblock ArXiv Preprint.

\bibitem[Baes et~al.(2013)Baes, Burgisser, and Nemirovski]{baes2013randomized}
M.~Baes, M.~Burgisser, and A.~Nemirovski.
\newblock {A randomized mirror-prox method for solving structured large-scale
  matrix saddle-point problems}.
\newblock \emph{SIAM Journal on Optimization}, 2013.

\bibitem[Boob and Khalafi(2024)]{boob2024optimal}
D.~Boob and M.~Khalafi.
\newblock {Optimal Primal-Dual Algorithm with Last iterate Convergence
  Guarantees for Stochastic Convex Optimization Problems}, 2024.
\newblock ArXiv Preprint.

\bibitem[Borodich et~al.(2024)Borodich, Kormakov, Kovalev, Beznosikov, and
  Gasnikov]{borodich2024nearoptimal}
E.~Borodich, G.~Kormakov, D.~Kovalev, A.~Beznosikov, and A.~Gasnikov.
\newblock {Near-Optimal Algorithm with Complexity Separation for Strongly
  Convex-Strongly Concave Composite Saddle Point Problems}.
\newblock In \emph{ICOMP}, 2024.

\bibitem[Cai et~al.(2024)Cai, Alacaoglu, and Diakonikolas]{cai2024variance}
X.~Cai, A.~Alacaoglu, and J.~Diakonikolas.
\newblock {Variance Reduced Halpern Iteration for Finite-Sum Monotone
  Inclusions}.
\newblock In \emph{ICLR}, 2024.

\bibitem[Carmon and Hausler(2022)]{carmon2022distributionally}
Y.~Carmon and D.~Hausler.
\newblock {Distributionally Robust Optimization via Ball Oracle Acceleration}.
\newblock In \emph{NeurIPS}, 2022.

\bibitem[Carmon et~al.(2022)Carmon, Jambulapati, Jin, and
  Sidford]{carmon2022recapp}
Y.~Carmon, A.~Jambulapati, Y.~Jin, and A.~Sidford.
\newblock {RECAPP: Crafting a More Efficient Catalyst for Convex Optimization}.
\newblock In \emph{ICML}, 2022.

\bibitem[Chambolle and Pock(2011)]{chambolle2011afirstorder}
A.~Chambolle and T.~Pock.
\newblock {A First-Order Primal-Dual Algorithm for Convex Problems with
  Applications to Imaging}.
\newblock \emph{Journal of Mathematical Imaging and Vision}, 2011.

\bibitem[Chambolle and Pock(2016)]{chambolle2016ontheergodic}
A.~Chambolle and T.~Pock.
\newblock {On the ergodic convergence rates of a first-order primal--dual
  algorithm}.
\newblock \emph{Mathematical Programming}, 2016.

\bibitem[Chambolle et~al.(2018)Chambolle, Ehrhardt, Richt\'{a}rik, and
  Sch\"{o}nlieb]{chambolle2018stochastic}
A.~Chambolle, M.~J. Ehrhardt, P.~Richt\'{a}rik, and C.-B. Sch\"{o}nlieb.
\newblock {Stochastic Primal-Dual Hybrid Gradient Algorithm with Arbitrary
  Sampling and Imaging Applications}.
\newblock \emph{SIAM Journal on Optimization}, 2018.

\bibitem[Cui et~al.(2018)Cui, Pang, and Sen]{cui2018composite}
Y.~Cui, J.-S. Pang, and B.~Sen.
\newblock {Composite Difference-Max Programs for Modern Statistical Estimation
  Problems}.
\newblock \emph{SIAM Journal on Optimization}, 2018.

\bibitem[Dang and Lan(2014)]{dang2014randomized}
C.~Dang and G.~Lan.
\newblock {Randomized First-Order Methods for Saddle Point Optimization}, 2014.
\newblock ArXiv Preprint.

\bibitem[Defazio et~al.(2014)Defazio, Bach, and
  Lacoste-Julien]{defazio2014saga}
A.~Defazio, F.~Bach, and S.~Lacoste-Julien.
\newblock {SAGA: A Fast Incremental Gradient Method With Support for
  Non-Strongly Convex Composite Objectives}.
\newblock In \emph{NeurIPS}, 2014.

\bibitem[Diakonikolas(2025)]{diakonikolas2025block}
J.~Diakonikolas.
\newblock {A Block Coordinate and Variance-Reduced Method for Generalized
  Variational Inequalities of Minty Type}.
\newblock \emph{JDS}, 2025.

\bibitem[Diakonikolas and Orecchia(2019)]{diakonikolas2019approximate}
J.~Diakonikolas and L.~Orecchia.
\newblock {The approximate duality gap technique: A unified theory of
  first-order methods}.
\newblock \emph{SIAM Journal on Optimization}, 2019.

\bibitem[Doikov and Nesterov(2022)]{doikov2022high}
N.~Doikov and Y.~Nesterov.
\newblock {High-Order Optimization Methods for Fully Composite Problems}.
\newblock \emph{SIAM Journal on Optimization}, 2022.

\bibitem[Drusvyatskiy and Paquette(2019)]{drusvyatskiy2019efficiency}
D.~Drusvyatskiy and C.~Paquette.
\newblock {Efficiency of minimizing compositions of convex functions and smooth
  maps}.
\newblock \emph{Mathematical Programming}, 2019.

\bibitem[Facchinei and Pang(2003)]{facchinei2003finite}
F.~Facchinei and J.-S. Pang.
\newblock \emph{Finite-dimensional variational inequalities and complementarity
  problems}.
\newblock Springer, 2003.

\bibitem[Fercoq(2024)]{fercoq2024monitoring}
O.~Fercoq.
\newblock {Monitoring the Convergence Speed of PDHG to Find Better Primal and
  Dual Step Sizes}, 2024.
\newblock ArXiv Preprint.

\bibitem[Fercoq and Bianchi(2019)]{fercoq2019coordinate}
O.~Fercoq and P.~Bianchi.
\newblock {A Coordinate-Descent Primal-Dual Algorithm with Large Step Size and
  Possibly Nonseparable Functions}.
\newblock \emph{SIAM Journal on Optimization}, 2019.

\bibitem[Goodfellow et~al.(2014)Goodfellow, Pouget-Abadie, Mirza, Xu,
  Warde-Farley, Ozair, Courville, and Bengio]{goodfellow2014generative}
I.~Goodfellow, J.~Pouget-Abadie, M.~Mirza, B.~Xu, D.~Warde-Farley, S.~Ozair,
  A.~Courville, and Y.~Bengio.
\newblock {Generative Adversarial Nets}.
\newblock In \emph{NeurIPS}, 2014.

\bibitem[Gower et~al.(2020)Gower, Schmidt, Bach, and
  Richtárik]{gower2020variance}
R.~M. Gower, M.~Schmidt, F.~Bach, and P.~Richtárik.
\newblock {Variance-Reduced Methods for Machine Learning}.
\newblock \emph{Proceedings of the IEEE}, 2020.

\bibitem[Hamedani and Aybat(2021)]{hamedani2021aprimal}
E.~Y. Hamedani and N.~S. Aybat.
\newblock {A Primal-Dual Algorithm with Line Search for General Convex-Concave
  Saddle Point Problems}.
\newblock \emph{SIAM Journal on Optimization}, 2021.

\bibitem[Hamedani and Jalilzadeh(2023)]{hamedani2023astochastic}
E.~Y. Hamedani and A.~Jalilzadeh.
\newblock {A Stochastic Variance-reduced Accelerated Primal-Dual Method for
  Finite-Sum Saddle-Point Problems}.
\newblock \emph{Computational Optimization and Applications}, 2023.

\bibitem[Hamedani et~al.(2023)Hamedani, Jalilzadeh, and
  Aybat]{hamedani2023randomized}
E.~Y. Hamedani, A.~Jalilzadeh, and N.~S. Aybat.
\newblock {Randomized Primal-Dual Methods with Adaptive Step Sizes}.
\newblock In \emph{AISTATS}, 2023.

\bibitem[Jin et~al.(2022)Jin, Sidford, and Tian]{jin2022sharper}
Y.~Jin, A.~Sidford, and K.~Tian.
\newblock {Sharper Rates for Separable Minimax and Finite Sum Optimization via
  Primal-Dual Extragradient Methods}.
\newblock In \emph{COLT}, 2022.

\bibitem[Johnson and Zhang(2013)]{johnson2013accelerating}
R.~Johnson and T.~Zhang.
\newblock {Accelerating Stochastic Gradient Descent using Predictive Variance
  Reduction}.
\newblock In \emph{NeurIPS}, 2013.

\bibitem[Juditsky et~al.(2011)Juditsky, Nemirovski, et~al.]{juditsky2011first}
A.~Juditsky, A.~Nemirovski, et~al.
\newblock {First-Order Methods for Nonsmooth Convex Large-Scale Optimization,
  II: Utilizing Problem's Structure}.
\newblock \emph{Optimization for Machine Learning}, 2011.

\bibitem[Juditsky et~al.(2013)Juditsky, K{\i}l{\i}n{\c{c}}~Karzan, and
  Nemirovski]{juditsky2013randomized}
A.~Juditsky, F.~K{\i}l{\i}n{\c{c}}~Karzan, and A.~Nemirovski.
\newblock {Randomized first order algorithm with applications to
  $\ell_1$-minimization}.
\newblock \emph{Mathematical Programming}, 2013.

\bibitem[Kallus et~al.(2022)Kallus, Mao, Wang, and Zhou]{Kallus2022Doubly}
N.~Kallus, X.~Mao, K.~Wang, and Z.~Zhou.
\newblock {Doubly Robust Distributionally Robust Off-Policy Evaluation and
  Learning}.
\newblock In \emph{ICML}, 2022.

\bibitem[Korpelevich(1976)]{korpelevich1976extragradient}
G.~M. Korpelevich.
\newblock {An Extragradient Method for Finding Saddle Points and for Other
  Problems}.
\newblock \emph{Ekonomika i Matematicheskie Metody}, 1976.

\bibitem[Kotsalis et~al.(2022)Kotsalis, Lan, and Li]{kotsalis2022simple}
G.~Kotsalis, G.~Lan, and T.~Li.
\newblock {Simple and optimal methods for stochastic variational inequalities,
  II: Markovian noise and policy evaluation in reinforcement learning}.
\newblock \emph{SIAM Journal on Optimization}, 2022.

\bibitem[Kovalev and Gasnikov(2022)]{kovalev2022thefirst}
D.~Kovalev and A.~Gasnikov.
\newblock {The First Optimal Algorithm for Smooth and
  Strongly-Convex-Strongly-Concave Minimax Optimization}.
\newblock In \emph{NeurIPS}, 2022.

\bibitem[Kuhn et~al.(2019)Kuhn, Esfahani, Nguyen, and
  Shafieezadeh-Abadeh]{Kuhn2019Wasserstein}
D.~Kuhn, P.~M. Esfahani, V.~A. Nguyen, and S.~Shafieezadeh-Abadeh.
\newblock {Wasserstein Distributionally Robust Optimization: Theory and
  Applications in Machine Learning}.
\newblock \emph{INFORMS TuORials: Operations Research \& Management Science in
  the Age of Analytics}, 2019.

\bibitem[Lan and Li(2024)]{lan2024autoconditioned}
G.~Lan and T.~Li.
\newblock {Auto-conditioned primal-dual hybrid gradient method and alternating
  direction method of multipliers}, 2024.
\newblock ArXiv Preprint.

\bibitem[Lan and Li(2023)]{lan2023novel}
G.~Lan and Y.~Li.
\newblock {A Novel Catalyst Scheme for Stochastic Minimax Optimization}, 2023.
\newblock ArXiv Preprint.

\bibitem[Le~Thi Thanh~Hai and Vuong(2025)]{hai2025arefined}
T.~Q.~T. Le~Thi Thanh~Hai and P.~T. Vuong.
\newblock {A refined convergence analysis of Popov's algorithm for
  pseudo-monotone variational inequalities}.
\newblock \emph{Optimization}, 2025.

\bibitem[Levy et~al.(2020)Levy, Carmon, Duchi, and
  Sidford]{Levy2020Large-Scale}
D.~Levy, Y.~Carmon, J.~Duchi, and A.~Sidford.
\newblock {Large-Scale Methods for Distributionally Robust Optimization}.
\newblock In \emph{NeurIPS}, 2020.

\bibitem[Li et~al.(2023)Li, Yuan, Gidel, Gu, and Jordan]{li2023nesterov}
C.~J. Li, A.~Yuan, G.~Gidel, Q.~Gu, and M.~I. Jordan.
\newblock {Nesterov Meets Optimism: Rate-Optimal Separable Minimax
  Optimization}.
\newblock In \emph{ICML}, 2023.

\bibitem[Li et~al.(2024)Li, Karmalkar, Diakonikolas, and
  Diakonikolas]{li2024learning}
S.~Li, S.~Karmalkar, I.~Diakonikolas, and J.~Diakonikolas.
\newblock Learning a single neuron robustly to distributional shifts and
  adversarial label noise.
\newblock In \emph{NeurIPS}, 2024.

\bibitem[Lin et~al.(2018)Lin, Mairal, and Harchaoui]{lin2018catalyst}
H.~Lin, J.~Mairal, and Z.~Harchaoui.
\newblock {Catalyst acceleration for first-order convex optimization: from
  theory to practice}.
\newblock \emph{Journal of Machine Learning Research}, 2018.

\bibitem[Lin et~al.(2020)Lin, Jin, and Jordan]{lin2020near}
T.~Lin, C.~Jin, and M.~I. Jordan.
\newblock {Near-Optimal Algorithms for Minimax Optimization}.
\newblock In \emph{COLT}, 2020.

\bibitem[Madry et~al.(2018)Madry, Makelov, Schmidt, Tsipras, and
  Vladu]{madry2018towards}
A.~Madry, A.~Makelov, L.~Schmidt, D.~Tsipras, and A.~Vladu.
\newblock {Towards Deep Learning Models Resistant to Adversarial Attacks}.
\newblock In \emph{ICLR}, 2018.

\bibitem[Malitsky(2020)]{malitsky2020golden}
Y.~Malitsky.
\newblock {Golden ratio algorithms for variational inequalities}.
\newblock \emph{Mathematical Programming}, 2020.

\bibitem[Marcotte(1991)]{marcotte1991application}
P.~Marcotte.
\newblock {Application Of Khobotov’s Algorithm To Variational Inequalities
  And Network Equilibrium Problems}.
\newblock \emph{INFOR: Information Systems and Operational Research}, 1991.

\bibitem[Mehrabi et~al.(2021)Mehrabi, Morstatter, Saxena, Lerman, and
  Galstyan]{mehrabi2021asurvey}
N.~Mehrabi, F.~Morstatter, N.~Saxena, K.~Lerman, and A.~Galstyan.
\newblock {A Survey on Bias and Fairness in Machine Learning}.
\newblock \emph{ACM Computing Surveys}, 2021.

\bibitem[Mehta et~al.(2024{\natexlab{a}})Mehta, Diakonikolas, and
  Harchaoui]{mehta2024drago}
R.~Mehta, J.~Diakonikolas, and Z.~Harchaoui.
\newblock {Drago: Primal-Dual Coupled Variance Reduction for Faster
  Distributionally Robust Optimization}.
\newblock In \emph{NeurIPS}, 2024{\natexlab{a}}.

\bibitem[Mehta et~al.(2024{\natexlab{b}})Mehta, Roulet, Pillutla, and
  Harchaoui]{mehta2024distributionally}
R.~Mehta, V.~Roulet, K.~Pillutla, and Z.~Harchaoui.
\newblock {Distributionally Robust Optimization with Bias and Variance
  Reduction}.
\newblock In \emph{ICLR}, 2024{\natexlab{b}}.

\bibitem[Mokhtari et~al.(2020)Mokhtari, Ozdaglar, and
  Pattathil]{mokhtari2020aunified}
A.~Mokhtari, A.~Ozdaglar, and S.~Pattathil.
\newblock {A Unified Analysis of Extra-gradient and Optimistic Gradient Methods
  for Saddle Point Problems: Proximal Point Approach}.
\newblock In \emph{AISTATS}, 2020.

\bibitem[Moos et~al.(2022)Moos, Hansel, Abdulsamad, Stark, Clever, and
  Peters]{moos2022robust}
J.~Moos, K.~Hansel, H.~Abdulsamad, S.~Stark, D.~Clever, and J.~Peters.
\newblock Robust reinforcement learning: A review of foundations and recent
  advances.
\newblock \emph{Machine Learning and Knowledge Extraction}, 2022.

\bibitem[Namkoong and Duchi(2016)]{Namkoong2016Stochastic}
H.~Namkoong and J.~C. Duchi.
\newblock {Stochastic Gradient Methods for Distributionally Robust Optimization
  with f-divergences}.
\newblock In \emph{NeurIPS}, 2016.

\bibitem[Nemirovski(2004)]{nemirovski2004mirrorprox}
A.~Nemirovski.
\newblock {Prox-Method with Rate of Convergence $O(1/t)$ for Variational
  Inequalities with Lipschitz Continuous Monotone Operators and Smooth
  Convex-Concave Saddle Point Problems}.
\newblock \emph{SIAM Journal on Optimization}, 2004.

\bibitem[Nesterov(2007{\natexlab{a}})]{nesterov2007dual}
Y.~Nesterov.
\newblock {Dual extrapolation and its applications to solving variational
  inequalities and related problems}.
\newblock \emph{Mathematical Programming}, 2007{\natexlab{a}}.

\bibitem[Nesterov(2007{\natexlab{b}})]{nesterov2007smoothing}
Y.~Nesterov.
\newblock {Smoothing Technique and its Applications in Semidefinite
  Optimization}.
\newblock \emph{Mathematical Programming}, 2007{\natexlab{b}}.

\bibitem[Nesterov and Scrimali(2006)]{nesterov2006solving}
Y.~Nesterov and L.~Scrimali.
\newblock {Solving strongly monotone variational and quasi-variational
  inequalities}.
\newblock \emph{Core Discussion Paper}, 2006.

\bibitem[Palaniappan and Bach(2016)]{palaniappan2016stochastic}
B.~Palaniappan and F.~Bach.
\newblock {Stochastic Variance Reduction Methods for Saddle-Point Problems}.
\newblock In \emph{NeurIPS}, 2016.

\bibitem[Pichugin et~al.(2024)Pichugin, Pechin, Beznosikov, Novitskii, and
  Gasnikov]{pichugin2024method}
A.~Pichugin, M.~Pechin, A.~Beznosikov, V.~Novitskii, and A.~Gasnikov.
\newblock {Method with batching for stochastic finite-sum variational
  inequalities in non-Euclidean setting}.
\newblock \emph{Chaos, Solitons \& Fractals}, 2024.

\bibitem[Popov.(1980)]{popov1980amodification}
L.~D. Popov.
\newblock {A modification of the Arrow-Hurwicz method for search of saddle
  points}.
\newblock \emph{Mathematical notes of the Academy of Sciences of the USSR},
  1980.

\bibitem[Sagawa et~al.(2020)Sagawa, Koh, Hashimoto, and
  Liang]{Sagawa2020Distributionally}
S.~Sagawa, P.~W. Koh, T.~B. Hashimoto, and P.~Liang.
\newblock {Distributionally Robust Neural Networks}.
\newblock In \emph{ICLR}, 2020.

\bibitem[Shapiro(2017)]{shapiro2017distributionally}
A.~Shapiro.
\newblock {Distributionally Robust Stochastic Programming}.
\newblock \emph{SIAM Journal on Optimization}, 2017.

\bibitem[Song et~al.(2021)Song, Wright, and Diakonikolas]{song2021variance}
C.~Song, S.~J. Wright, and J.~Diakonikolas.
\newblock {Variance Reduction via Primal-Dual Accelerated Dual Averaging for
  Nonsmooth Convex Finite-Sums}.
\newblock In \emph{ICML}, 2021.

\bibitem[Song et~al.(2022)Song, Lin, Wright, and
  Diakonikolas]{song2022coordinate}
C.~Song, C.~Y. Lin, S.~Wright, and J.~Diakonikolas.
\newblock {Coordinate Linear Variance Reduction for Generalized Linear
  Programming}.
\newblock \emph{NeurIPS}, 2022.

\bibitem[Tseng(1995)]{tseng1995onlinear}
P.~Tseng.
\newblock {On linear convergence of iterative methods for the variational
  inequality problem}.
\newblock \emph{Journal of Computational and Applied Mathematics}, 1995.

\bibitem[Vladarean et~al.(2023)Vladarean, Doikov, Jaggi, and
  Flammarion]{vladarean2023linearization}
M.-L. Vladarean, N.~Doikov, M.~Jaggi, and N.~Flammarion.
\newblock {Linearization Algorithms for Fully Composite Optimization}.
\newblock In \emph{COLT}, 2023.

\bibitem[Wang et~al.(2023)Wang, Si, Blanchet, and Zhou]{Wang2023AFinite}
S.~Wang, N.~Si, J.~Blanchet, and Z.~Zhou.
\newblock {A Finite Sample Complexity Bound for Distributionally Robust
  Q-learning}.
\newblock In \emph{AISTATS}, 2023.

\bibitem[Wang and Li(2020)]{wang2020improved}
Y.~Wang and J.~Li.
\newblock {Improved Algorithms for Convex-Concave Minimax Optimization}.
\newblock In \emph{NeurIPS}, 2020.

\bibitem[Xinying~Chen and Hooker(2023)]{Chen2023AGuide}
V.~Xinying~Chen and J.~N. Hooker.
\newblock {A guide to formulating fairness in an optimization model}.
\newblock \emph{Annals of Operations Research}, 2023.

\bibitem[Yang et~al.(2020)Yang, Zhang, Kiyavash, and He]{he2020acatalyst}
J.~Yang, S.~Zhang, N.~Kiyavash, and N.~He.
\newblock {A Catalyst Framework for Minimax Optimization}.
\newblock In \emph{NeurIPS}, 2020.

\bibitem[Yang et~al.(2023)Yang, Guo, Xu, Liu, and
  Anandkumar]{Yang2023Distributionally}
Z.~Yang, Y.~Guo, P.~Xu, A.~Liu, and A.~Anandkumar.
\newblock {Distributionally Robust Policy Gradient for Offline Contextual
  Bandits}.
\newblock In \emph{AISTATS}, 2023.

\bibitem[Yu et~al.(2023)Yu, Dai, Xu, Gao, and Ho]{Yu2023FastBellman}
Z.~Yu, L.~Dai, S.~Xu, S.~Gao, and C.~P. Ho.
\newblock {Fast Bellman Updates for Wasserstein Distributionally Robust MDPs}.
\newblock In \emph{NeurIPS}, 2023.

\bibitem[Zhang et~al.(2022)Zhang, Hong, and Zhang]{Zhang2022OnLower}
J.~Zhang, M.~Hong, and S.~Zhang.
\newblock {On lower iteration complexity bounds for the convex concave saddle
  point problems}.
\newblock \emph{Mathematical Programming}, 2022.

\end{thebibliography}

\clearpage
\appendix

\section{Additional Results}\label{sec:a:tables}
\edit{To complement the discussion surrounding \Cref{tab:complexity_internal} in \Cref{sec:discussion}, we also include similar tables for cases in which the strong convexity constants satisfy $\mu > 0$ and $\nu = 0$ (\Cref{tab:complexity_internal_nu0}), $\mu = 0$ and $\nu > 0$ (\Cref{tab:complexity_internal_mu0}), and $\mu = \nu = 0$ (\Cref{tab:complexity_internal_mu0_nu0}). 
}

\renewcommand{\arraystretch}{1.4}
\begin{table*}[h]
\centering
    \begin{adjustbox}{max width=\linewidth}
    \begin{tabular}{ccc}
    \toprule
        {\bf Algorithm Type} & {\bf Global Complexity (big-$O$)} \\
        \midrule
        Full vector (\Cref{thm:nonsep:full}) & $nd\p{\frac{L}{\mu}\ln\p{\tfrac{1}{\epsilon}} + G\sqrt{\frac{1}{\mu \epsilon}}}$\\
        Stochastic (\Cref{thm:stochastic:historic}) & $\p{\frac{nd}{N}+n}\p{\p{N + \frac{N\norm{\lambb}_1}{\mu}}\ln\p{\frac{1}{\epsilon}} + N\norm{\lambb}_1^{1/2}\norm{\Gbb}_1^{1/2}\sqrt{\frac{1}{\mu \epsilon}}}$\\
        Stochastic (\Cref{thm:stochastic:minty}) & $\p{\frac{nd}{N}+n}\p{\p{N + \frac{\norm{\lambb}_{1/2}^{3/4}\norm{\Lbb}_{1/2}^{1/4}}{\mu}}\ln\p{\frac{1}{\epsilon}} + \norm{\lambb}_{1/2}^{1/2}\norm{\Gbb}_{1/2}^{1/2}\sqrt{\frac{1}{\mu \epsilon}}}$\\        
        Block Coordinate-wise (\Cref{thm:sep:historic}) & 
        $\frac{nd}{N}\p{\p{N + \frac{\sqrt{N}\norm{\lambb}_1^{1/2}\norm{\Lbb}_1^{1/2}}{\mu}}\ln\p{\frac{1}{\epsilon}} + \sqrt{N}\norm{\lambb}_1^{1/2}\norm{\Gbb}_{\infty}^{1/2}\sqrt{\frac{1}{\mu \epsilon}}}$\\
        Block Coordinate-wise (\Cref{thm:sep:minty}) & 
        $\frac{nd}{N}\p{\p{N + \frac{\norm{\lambb}_{1/2}^{1/4}\norm{\Lbb}_{1/2}^{3/4}}{\mu}}\ln\p{\frac{1}{\epsilon}} + \sqrt{N}\norm{\lambb}_{1/2}^{1/4}\norm{\Gbb}_{\infty}^{3/4}\sqrt{\frac{1}{\mu \epsilon}}}$\\
        \bottomrule
    \end{tabular}
    \end{adjustbox}
    \vspace{6pt}
\caption{{\bf Complexity Bounds for Full Vector and Stochastic Methods for the Case $\mu > 0, \nu = 0$.} Arithmetic or global complexity (i.e., the total number of elementary operations required to compute $(\x, \y)$ satisfying $\E{}{\gap^{\u, \v}(\x, \y) \leq \epsilon}$ for fixed $(\u, \v) \in \X \times \Y$, with the expectation taken over all algorithmic randomness. 
}
\label{tab:complexity_internal_nu0}
\end{table*}

\renewcommand{\arraystretch}{1.4}
\begin{table*}[h]
\centering
    \begin{adjustbox}{max width=\linewidth}
    \begin{tabular}{ccc}
    \toprule
        {\bf Algorithm Type} & {\bf Global Complexity (big-$O$)} \\
        \midrule
        Full vector (\Cref{thm:nonsep:full}) & $nd\p{\frac{L}{\epsilon} +  G\sqrt{\frac{1}{\nu \epsilon}}}$\\
        Stochastic (\Cref{thm:stochastic:historic}) & $\p{\frac{nd}{N}+n}\p{N\ln\p{\frac{1}{\epsilon}} + \frac{\sqrt{N}\norm{\lambb}_1}{\epsilon} + (N\norm{\lambb}_1^{1/2}\norm{\Gbb}_1^{1/2})\sqrt{\frac{1}{\nu \epsilon}}}$\\
        Stochastic (\Cref{thm:stochastic:minty}) & 
        $\p{\frac{nd}{N}+n}\p{N\ln\p{\frac{1}{\epsilon}} + \frac{\norm{\lambb}_{1/2}^{3/4}\norm{\Lbb}_{1/2}^{1/4}}{\epsilon} + \norm{\lambb}_{1/2}^{1/2}\norm{\Gbb}_{1/2}^{1/2}\sqrt{\frac{1}{\nu \epsilon}}}$\\  
        Block Coordinate-wise (\Cref{thm:sep:historic}) & 
        $\frac{nd}{N}\p{\frac{\sqrt{N}\norm{\lambb}_1^{1/2}\norm{\Lbb}_1^{1/2}}{\epsilon} + \sqrt{N}\norm{\lambb}_1^{1/2}\norm{\Gbb}_{\infty}^{1/2}\sqrt{\frac{1}{\nu \epsilon}}}$\\
        Block Coordinate-wise (\Cref{thm:sep:minty}) & 
        $\frac{nd}{N}\p{N\ln \p{\frac{N}{\epsilon}} + \frac{\norm{\lambb}_{1/2}^{1/4}\norm{\Lbb}_{1/2}^{3/4}}{\epsilon} + \sqrt{N}\norm{\lambb}_{1/2}^{1/4}\norm{\Gbb}_{\infty}^{3/4}\sqrt{\frac{1}{\nu \epsilon}}}$\\
        \bottomrule
    \end{tabular}
    \end{adjustbox}
    \vspace{6pt}
\caption{{\bf Complexity Bounds for Full Vector and Stochastic Methods for the Case $\mu = 0, \nu > 0$.} Arithmetic or global complexity (i.e., the total number of elementary operations required to compute $(\x, \y)$ satisfying $\E{}{\gap^{\u, \v}(\x, \y) \leq \epsilon}$ for fixed $(\u, \v) \in \X \times \Y$, with the expectation taken over all algorithmic randomness. 
}
\label{tab:complexity_internal_mu0}
\end{table*}

\renewcommand{\arraystretch}{1.4}
\begin{table*}[h]
\centering
    \begin{adjustbox}{max width=\linewidth}
    \begin{tabular}{ccc}
    \toprule
        {\bf Algorithm Type} & {\bf Global Complexity (big-$O$)} \\
        \midrule
        Full vector (\Cref{thm:nonsep:full}) & $nd\p{\frac{L  + G}{\epsilon}}$\\
        Stochastic (\Cref{thm:stochastic:historic}) & $\p{\frac{nd}{N}+n}\p{\frac{\sqrt{N}\norm{\lambb}_1  + \sqrt{N}\norm{\lambb}_1^{1/2}\norm{\Gbb}_1^{1/2}}{\epsilon}}$\\
        Stochastic (\Cref{thm:stochastic:minty}) & 
        $\p{\frac{nd}{N}+n}\p{N\ln\p{\frac{1}{\epsilon}} + \frac{\norm{\lambb}_{1/2}^{3/4}\norm{\Lbb}_{1/2}^{1/4}  + \norm{\lambb}_{1/2}^{1/2}\norm{\Gbb}_{1/2}^{1/2}}{\epsilon}}$\\
        Block Coordinate-wise (\Cref{thm:sep:historic}) & 
        $\frac{nd}{N}\p{\frac{\sqrt{N}\norm{\lambb}_1^{1/2}\norm{\Lbb}_1^{1/2}  + \sqrt{N}\norm{\lambb}_1^{1/2}\norm{\Gbb}_{\infty}^{1/2}}{\epsilon}}$\\
        Block Coordinate-wise (\Cref{thm:sep:minty}) & 
        $\frac{nd}{N}\p{N\ln \p{\frac{N}{\epsilon}} + \frac{\norm{\lambb}_{1/2}^{1/4}\norm{\Lbb}_{1/2}^{3/4}  + \sqrt{N}\norm{\lambb}_{1/2}^{1/4}\norm{\Gbb}_{\infty}^{3/4}}{\epsilon}}$\\
        \bottomrule
    \end{tabular}
    \end{adjustbox}
    \vspace{6pt}
\caption{{\bf Complexity Bounds for Full Vector and Stochastic Methods for the Case $\mu = \nu = 0$.} Arithmetic or global complexity (i.e., the total number of elementary operations required to compute $(\x, \y)$ satisfying $\E{}{\gap^{\u, \v}(\x, \y) \leq \epsilon}$ for fixed $(\u, \v) \in \X \times \Y$, with the expectation taken over all algorithmic randomness. 
}
\label{tab:complexity_internal_mu0_nu0}
\end{table*}

\end{document}